\renewcommand{\pod}[1]{\allowbreak\mathchoice
  {\if@display \mkern 18mu\else \mkern 8mu\fi (#1)}
  {\if@display \mkern 18mu\else \mkern 8mu\fi (#1)}
  {\mkern4mu(#1)}
  {\mkern4mu(#1)}
}
\renewcommand{\eqref}[1]{(\ref{#1})}   
\theoremstyle{plain}
\numberwithin{equation}{section}
\newtheorem{theorem}{Theorem}[section]
\newtheorem{theorema}{Theorem}
\newtheorem{lemma}[theorem]{Lemma}
\newtheorem{proposition}[theorem]{Proposition}
\newtheorem{algorithm}{Algorithm}
\newtheorem{definition}[theorem]{Definition}
\newtheorem{remark}[theorem]{Remark}
\newcommand {\Q}{{\mathbb{Q}}}
\newcommand {\C}{{\mathbb{C}}}
\newcommand {\Z}{{\mathbb{Z}}}
\newcommand {\N}{{\mathbb{N}}}
\newcommand {\F}{{\mathbb{F}}}
\newcommand {\PP}{\mathbb{P}}
\newcommand {\LL}{{\mathcal{L}}}
\newcommand{\Gal}      {\mathop{\rm {Gal}}}
\newcommand{\Cl}      {\mathop{\rm {Cl}}}
\begin{document}

\title{Malle's conjecture for nonic Heisenberg extensions}
\date{\today}
\author{\'Etienne Fouvry}
\address{Universit\' e Paris--Saclay, CNRS, Laboratoire de math\' ematiques d'Orsay, 91405 Orsay, France}
\email{Etienne.Fouvry@universite-paris-saclay.fr}
\author{Peter Koymans}
\address{Max Planck Institute for Mathematics, Vivatsgasse 7, 53111 Bonn, Germany}
\email{koymans@mpim-bonn.mpg.de}
 

\begin{abstract}   
We prove Malle's conjecture for nonic Heisenberg extensions over $\Q$. Our main algebraic result shows that the number of nonic Heisenberg extensions over $\Q$ with discriminant bounded by $X$ is given by a character sum. We then extract the main term from this sum by exploiting oscillation of characters.
\end{abstract}

\maketitle

\section{Introduction}
\label{intro}
A fundamental problem in arithmetic statistics is to count algebraic extensions over $\Q$ with bounded discriminant. This subject has its roots in a famous theorem due to Hermite that there are only finitely many number fields with bounded discriminant.

Let $K/\Q$ be an extension of degree $n$ and write $L$ for the normal closure of $K$. Then $\Gal(L/\Q)$ acts on the $n$ embeddings $K \hookrightarrow \overline{\Q}$, which gives a homomorphism from $\Gal(L/\Q)$ to $S_n$. By abuse of notation we define $\Gal(K/\Q) \subseteq S_n$ to be the image of this homomorphism. We then define for every transitive group $G \subseteq S_n$ the counting function
\[
N(G, X) := |\{K/\Q : \Gal(K/\Q) \cong G, \Delta_{K/\Q} \leq X\}|,
\]
where $\Delta_{K/\Q}$ is the absolute discriminant and the fields $K$ are taken inside a fixed algebraic closure of $\Q$. Here we stress that the isomorphism is not just an isomorphism of finite groups but as permutation groups; this is equivalent to $G$ and $\Gal(K/\Q)$ being conjugate subgroups of $S_n$. 
This counting function is the subject of Malle's conjecture \cite{Malle, Malle2}, who conjectured an asymptotic of the form
\begin{align}
\label{eStrongMalle}
N(G, X) \sim c(G) X^{a(G)} (\log X)^{b(G) - 1},
\end{align}
where $c(G)$ is an unspecified constant and where $a(G)$ and $b(G)$ can be computed as follows. Let $G \subseteq S_n$, so that $G$ has a natural action on the set $\{1, \dots, n\}$. Then put for $\sigma \in G$
\[
\text{ind}(\sigma) := n - |\{\text{orbits of } \sigma\}|,
\]
where the orbits are with respect to the action on $\{1, \dots, n\}$. We define
\[
a(G)^{-1} := \min_{\sigma \in G \setminus \{\text{id}\}} \text{ind}(\sigma).
\]
To define $b(G)$, we consider the following action of $\Gal(\overline{\mathbb{Q}}/\mathbb{Q})$ on $G$. Let $c: \Gal(\overline{\mathbb{Q}}/\mathbb{Q}) \rightarrow \hat{\mathbb{Z}}^\ast$ be the cyclotomic character. For $g \in G$ and $\sigma \in \Gal(\overline{\mathbb{Q}}/\mathbb{Q})$, we define
\[
g^\sigma := g^{c(\sigma)}.
\]
It is easy to see that this induces an action of $\Gal(\overline{\mathbb{Q}}/\mathbb{Q})$ on $C(G)$, the conjugacy classes of $G$. We remark that $\text{ind}(\sigma)$ is constant as $\sigma$ varies through a conjugacy class $C$, which allows us to define $\text{ind}(C)$ in the obvious way. Furthermore, the index of $g$ is the same as the index of $g^\sigma$. Then we define
\[
b(G) := |\{C \in C(G) : \text{ind}(C) = a(G)^{-1}\}/\sim|,
\]
where two conjugacy classes are equivalent if they are in the same orbit under the action of $\Gal(\overline{\mathbb{Q}}/\mathbb{Q})$ on $C(G)$. As stated the exponent $b(G)$ in Malle's conjecture is not always correct, see the work of Kl\"uners  \cite{Klunerscounter} for a counterexample. T\"urkelli \cite{Turkelli} proposed a modified version of Malle's conjecture, with a different $b(G)$, to take into account the counterexample found by Kl\"uners.

Equation (\ref{eStrongMalle}) is known in a limited number of cases, see the work of Wright \cite{Wright} for abelian $G$, Davenport--Heilbronn \cite{DH} for $S_3$, Kl\"uners \cite{Kluners} for generalized quaternion groups, Bhargava \cite{Bhargava1, Bhargava2} for $S_4$ and $S_5$, Bhargava--Wood \cite{BW} for $S_3 \subseteq S_6$ and \cite{Wang1, Wang2} for direct products $G \times A$ with $G \in \{S_3, S_4, S_5\}$ and $A$ abelian. Alberts \cite{Alberts1, Alberts2} made progress for many solvable groups. Finally, equation (\ref{eStrongMalle}) is also known for quartic $D_4$-extensions, see the work \cite{CDO} that we reproduce now.

\begin{theorema}[Cohen--Diaz y Diaz--Olivier]
\label{tDisc}
The number of degree $4$ extensions $L$ of $\Q$, up to isomorphism, such that the normal closure has Galois group isomorphic to $D_4$, with absolute discriminant at most $X$ is asymptotic to $c(D_4) X$, where
\[
c(D_4) = \frac{3}{\pi^2} \sum_D \frac{2^{-i(D)}}{D^2} \frac{L(1, D)}{L(2, D)}.
\]
Here the sum is over fundamental discriminants different from $1$, and $i(D) = 0$ if $D > 0$ and $i(D) = 1$ if $D < 0$.
\end{theorema}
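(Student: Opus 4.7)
The plan is to parametrize each $D_4$-quartic field $L/\Q$ by the unique quadratic subfield $K \subset L$ together with the quadratic extension $L/K$. Any $L$ with $\Gal(L/\Q) \cong D_4 \subset S_4$ contains exactly one quadratic subfield, and conversely each quadratic extension $L$ of a quadratic field $K/\Q$ yields a $D_4$-quartic provided $L/\Q$ is not Galois; writing $\tau$ for the non-trivial automorphism of $K$ and $L = K(\sqrt{m})$, being Galois over $\Q$ corresponds to $m\tau(m) \in K^{*2}$ (the $V_4$ case) or $m/\tau(m) \in K^{*2}$ (the $C_4$ case), and these must be sieved out. By the tower formula for discriminants,
\begin{equation*}
\Delta_{L/\Q} = \Delta_{K/\Q}^{\,2} \cdot N_{K/\Q}(\mathfrak{d}_{L/K}),
\end{equation*}
so the count splits as a double sum over fundamental discriminants $D \neq 1$ (parametrizing $K = K_D$) and over relative quadratic extensions $L/K_D$ with $N(\mathfrak{d}_{L/K_D}) \leq X/D^{\,2}$.

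For each fixed $K = K_D$ I would encode the inner count in the Dirichlet series
\begin{equation*}
F_K(s) = \sum_{L/K \text{ quadratic}} N_{K/\Q}(\mathfrak{d}_{L/K})^{-s}.
\end{equation*}
Kummer theory parametrizes the quadratic extensions of $K$ by squareclasses in $K^{*}$, and class field theory identifies the relative discriminant with the conductor of the corresponding character; a standard Euler product computation with tame local factor $1 + N(\mathfrak{p})^{-s}$ at each $\mathfrak{p} \nmid 2$ gives
\begin{equation*}
F_K(s) = \Phi_K(s) \cdot \frac{\zeta_K(s)}{\zeta_K(2s)},
\end{equation*}
where $\Phi_K(s)$ collects the wild corrections at primes above $2$, the contribution of the $2$-rank of the class group, and archimedean/unit factors depending on the signature of $K$.

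Using $\zeta_K(s) = \zeta(s) L(s, \chi_D)$ I would extract the residue at $s = 1$ via a Tauberian theorem (or direct Perron inversion), obtaining
\begin{equation*}
\#\{L/K : N(\mathfrak{d}_{L/K}) \leq Y\} \sim c_K \cdot Y, \quad c_K = \Phi_K(1) \cdot \frac{L(1, \chi_D)}{\zeta(2)\, L(2, \chi_D)}.
\end{equation*}
Setting $Y = X/D^{\,2}$ and summing over $D$ yields
\begin{equation*}
N(D_4, X) \sim \frac{X}{\zeta(2)} \sum_{D} \frac{\Phi_{K_D}(1)}{D^{\,2}} \cdot \frac{L(1, \chi_D)}{L(2, \chi_D)},
\end{equation*}
and the sum converges absolutely because $L(1, \chi_D) \ll \log|D|$ is tamed by the $D^{-2}$ weight. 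The task is then to verify that the combined constant $\Phi_{K_D}(1)/\zeta(2)$ collapses to $\tfrac{3}{\pi^{2}} \cdot 2^{-i(D)}$, which accounts for $1/\zeta(2) = 6/\pi^{2}$, the factor $\tfrac12$ from the overcount in the Kummer parametrization (two square roots $\pm\sqrt{m}$ per field $L$ together with the $V_4/C_4$ removal), and the archimedean factor $2^{-i(D)}$ coming from the rank of the unit group.

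The main obstacle is the interlocking of the local analysis at primes above $2$ with the sieving of $V_4$- and $C_4$-extensions. Wildness forces a case-by-case treatment of $\Phi_K$ depending on how $2$ splits, ramifies, or is inert in $K$ and on the residue $D \pmod 8$; meanwhile the Galois-over-$\Q$ conditions on $m\tau(m)$ and $m/\tau(m)$ couple those local computations with the archimedean signature. Showing that the messy $\Phi_K(1)$ simplifies, after this bookkeeping, to exactly $\tfrac{3}{\pi^2}\cdot 2^{-i(D)}\cdot \zeta(2)$ is the delicate step; once that identity is in hand, what remains is a routine residue extraction and the convergent sum over $D$.
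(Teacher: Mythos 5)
Your proposal follows essentially the same strategy as the cited Cohen--Diaz y Diaz--Olivier proof, which the paper itself summarizes: parametrize a $D_4$-quartic $L$ by its unique quadratic subfield $K=K_D$ and the relative quadratic extension $L/K$, count the latter via a Dirichlet series of the shape $\zeta_K(s)/\zeta_K(2s)$ corrected at wild primes, units, and class-group $2$-torsion, and then sum the resulting linear asymptotics over fundamental discriminants $D$. One small slip worth flagging: the two conditions you list for $L=K(\sqrt m)/\Q$ being Galois, namely $m\tau(m)\in K^{*2}$ and $m/\tau(m)\in K^{*2}$, are in fact the \emph{same} condition (they differ by the square $\tau(m)^{2}$), so they do not separate the $V_4$ and $C_4$ cases; the single criterion $m\tau(m)\in K^{*2}$ detects that $L/\Q$ is Galois, and one must then distinguish $V_4$ from $C_4$ by a secondary condition (e.g.\ whether $m$ lies in $\Q^{*}K^{*2}$). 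This does not alter the overall architecture, which is correct and matches the reference.
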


The error term in Theorem \ref{tDisc} is of exceptional quality, namely of size $O_\epsilon(X^{\frac{3}{4} + \epsilon})$. In this paper we are interested in nonic Heisenberg extensions, which bear some similarities with quartic $D_4$-extensions. Let $\text{Heis}_3$ be the Heisenberg group with $27$ elements, i.e. the multiplicative group of upper triangular matrices with coefficients in $\mathbb{F}_3$ and ones on the diagonal. Denote by $N(\text{Heis}_3, X)$ the number of degree $9$ extensions $L$ of $\Q$, up to isomorphism, such that the normal closure has Galois group isomorphic to $\textup{Heis}_3$ and such that the absolute discriminant is bounded by $X$. Our main result is the following.

\begin{theorem}
\label{tMain}
There exists a constant $c({\rm Heis}_3) > 0$ such that
\[
N(\textup{Heis}_3, X) \sim c({\rm Heis}_3) \, X^{1/4}.
\]
\end{theorem}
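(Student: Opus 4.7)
The plan is to adapt the Cohen--Diaz y Diaz--Olivier strategy from Theorem \ref{tDisc} to the Heisenberg setting. The key steps are to produce a class-field-theoretic parametrization of Heisenberg nonic fields, rewrite the count as a character sum, and extract its main term by exploiting cubic oscillation.

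\textbf{Parametrization and discriminant.} The non-central point stabilizer $H$ of the nonic permutation representation of $\textup{Heis}_3$ has centralizer $M = C_{\textup{Heis}_3}(H) \cong (\Z/3)^2$, which is the unique maximal subgroup containing $H$ and is normal of index $3$. First I would show that each Heisenberg nonic $L/\Q$ therefore admits a unique cyclic cubic subfield $K = \tilde L^M$, and that $L/K$ is cyclic of degree $3$. By class field theory, $L/K$ corresponds (modulo $\psi \leftrightarrow \psi^{-1}$) to a nontrivial cubic character $\psi$ of an idele class group of $K$. A short Jordan-form computation in $GL_2(\F_3)$ (using that $\sigma\in\Gal(K/\Q)$ acts on $M$ as a transvection and that a transvection $\sigma$ satisfies $1+\sigma+\sigma^2=0$) shows that the Galois closure of $L/\Q$ has group $\textup{Heis}_3$ precisely when $\psi + \sigma\psi + \sigma^2\psi = 0$ and $\sigma\psi \neq \psi$, where $\sigma$ generates $\Gal(K/\Q)$: the trace-zero condition forces the closure to have degree $27$ rather than $81$, and the non-invariance rules out the abelian options $(\Z/3)^3$ and $\Z/9$. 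The tower formula for discriminants then gives
\[
|\Delta_L| = \mathfrak{f}_K^6 \cdot N_{K/\Q}(\mathfrak{f}(\psi))^2,
\]
where $\mathfrak{f}_K$ denotes the conductor of $K$ and $\mathfrak{f}(\psi)$ the conductor of $\psi$ on $K$.

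\textbf{Reduction to a character sum.} Combining the above,
\[
N(\textup{Heis}_3, X) = \frac{1}{2} \sum_{K}\#\bigl\{ \psi : \psi+\sigma\psi+\sigma^2\psi = 0,\ \sigma\psi\neq\psi,\ \mathfrak{f}_K^3 N_{K/\Q}(\mathfrak{f}(\psi)) \leq X^{1/2}\bigr\},
\]
with $K$ running over cyclic cubic fields of $\Q$. Detecting the trace-zero relation by orthogonality against cubic Dirichlet characters and removing the $\sigma$-invariant $\psi$ (i.e., the abelian nonic extensions) by inclusion-exclusion, I would rewrite the inner count as a sum over cubic characters $\chi$ of $\Q$ and squarefree moduli in $K$, with explicit local contributions at each prime obtained from the structure of cubic characters of $K_\mathfrak{p}^*$. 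This yields the character-sum expression advertised in the abstract.

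\textbf{Extraction of the main term.} The mechanism producing the exponent $1/4$ (rather than the naive $1/2$) is that the trace-zero condition forces symmetric ramification: at a rational prime $p$ that splits in $K$ as $\mathfrak{p}_1\mathfrak{p}_2\mathfrak{p}_3$, a character ramified at only one $\mathfrak{p}_i$ has nontrivial trace, so any Heisenberg $\psi$ must be ramified at at least two primes above every split $p$. Hence $N_{K/\Q}(\mathfrak{f}(\psi))$ receives a factor $p^2$ in place of $p$, doubling the effective conductor and cutting the count from $X^{1/2}$ down to $X^{1/4}$. I would then study the Dirichlet series whose coefficients are the local character sums, locate its pole at $s = 1/4$, and invoke Perron/Tauberian methods to deduce the asymptotic $c(\textup{Heis}_3) X^{1/4}$ with $c(\textup{Heis}_3)$ equal to the residue.

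\textbf{Main obstacle.} The hardest step is controlling the off-diagonal contributions of the character sum uniformly in $K$. Because the sum is indexed by pairs $(K,\chi)$ with $\chi$ a cubic character of $\Q$ coupled to $K$ through the trace-zero relation, bounding the error seems to require a hybrid large-sieve type inequality for cubic characters summed over cubic cyclic fields. Handling the wildly ramified prime $p = 3$ separately and precisely isolating the $\sigma$-invariant (abelian) contribution, so that it does not produce a spurious secondary term of the same order $X^{1/4}$, are further technical points to navigate.
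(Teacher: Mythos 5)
Your characterization of the Heisenberg condition is incomplete, and this is the central gap. The two conditions $\psi + \sigma\psi + \sigma^2\psi = 0$ and $\sigma\psi \neq \psi$ together say exactly that $V := \langle\psi, \sigma\psi, \sigma^2\psi\rangle \subset \textup{Hom}(G_K, \mathbb{F}_3)$ is two-dimensional, so that $G := \Gal(\widetilde{L}/\Q)$ has order $27$ and is non-abelian, with $\sigma$ acting on $\Gal(\widetilde{L}/K) \cong (\Z/3\Z)^2$ by a transvection. But there are two non-abelian groups of order $27$: the Heisenberg group (exponent $3$) and the modular group $(\Z/9\Z)\rtimes(\Z/3\Z)$ (exponent $9$), and your Jordan-form argument cannot separate them, because in \emph{both} groups a generator of $G/\Gal(\widetilde{L}/K)$ acts on $\Gal(\widetilde{L}/K)$ by a transvection — every order-$3$ element of $GL_2(\mathbb{F}_3)$ is unipotent in characteristic $3$. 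What distinguishes the two is whether the extension $1 \to (\Z/3\Z)^2 \to G \to \Z/3\Z \to 1$ splits, i.e.\ a class in $H^2(\Z/3\Z, (\Z/3\Z)^2)$, which your $(K,\psi)$-data does not see. Moreover the modular group is also a transitive subgroup of $S_9$ with minimal index $4$ (achieved by its non-central order-$3$ elements), so it contributes $\asymp X^{1/4}$ nonic fields; your proposed identity for $N(\textup{Heis}_3, X)$ would therefore count both families and give the wrong constant.

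This is exactly what the paper's cohomological setup is designed to handle: Heisenberg extensions of $\Q(\chi,\chi')$ are defined as those $\rho \in \textup{Hom}(G_{\Q(\chi,\chi')}, \mathbb{F}_3)^{\Gal}$ whose image under the transgression map lies in the one-dimensional subspace of $H^2(\mathbb{F}_3^2, \mathbb{F}_3)$ spanned by $\theta_{\chi_1,\chi_2}$, and Lemma \ref{lHeisenbergExists} converts this $H^2$-condition into the concrete local criterion that every ramified prime $\neq 3$ has residue field degree $1$ in $\Q(\chi,\chi')$. That extra constraint is what must be added to your trace-zero condition before the discriminant count can begin. Once it is imposed, your $(K,\psi)$-parametrization is compatible with the paper's $(\chi,\chi',d)$-parametrization, and your ``symmetric ramification'' mechanism for $X^{1/4}$ is the same phenomenon encoded in $\widetilde{\Delta}(\chi)^6\cdot\textup{free}(\widetilde{\Delta}(\chi'),\widetilde{\Delta}(\chi))^4$. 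Note, however, that the paper does not go via a single Dirichlet series with a pole at $s=1/4$: because the local factors are genuinely coupled to $K$, it instead isolates the largest prime factor of one of the conductors and applies a Siegel--Walfisz theorem over $\Z[j]$ after cubic reciprocity, which is what makes the error term tractable without the hybrid large sieve you flag as the main obstacle.
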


We give a completely explicit formula for $c({\rm Heis}_3)$, which we postpone until equation (\ref{defcoeff}). In Remark \ref{rComparison}, we will compare the constants $c({\rm Heis}_3)$ and $c (D_4)$. Actually, our proof leads to the asymptotic formula
\[
N(\text{Heis}_3, X) = c({\rm Heis}_3) \, X^{1/4} + O_A(X^{1/4} (\log X)^{-A})
\]
for all $A > 0$.

Our main theorem implies Malle's conjecture for nonic Heisenberg extensions (note that, up to conjugation, there is precisely one transitive subgroup of $S_9$ isomorphic to $\text{Heis}_3$). One of the challenges is to find an explicit expression for the constant $c({\rm Heis}_3)$. Indeed, it is substantially easier to show that there exists a constant $c({\rm Heis}_3)$. This phenomenon can already be observed in the work of \cite{CDO2}, where the strong form of Malle's conjecture is proved, with an explicit constant $c(G, K)$, for cyclic degree $\ell$ extensions over an arbitrary base field $K$.

Despite the superficial similarities between Theorem \ref{tDisc} and Theorem \ref{tMain}, the proof techniques employed in Theorem \ref{tDisc} break down completely for nonic Heisenberg extensions. The key principle used in the proof of Theorem \ref{tDisc} is the following: take a quadratic extension $K/\Q$ and take a quadratic extension $L/K$. Then typically $L$ is a quartic $D_4$-extension of $\Q$. The problem then reduces to uniformly counting quadratic extensions.

However, this does not seem to be true for cyclic degree $\ell$ extensions. Instead we take an entirely different approach, where we estimate a certain character sum that counts the number of Heisenberg extensions. Our approach is in spirit of the work of Heath-Brown \cite{HB} and Fouvry--Kl\"uners \cite{Fo--Kl1, Fo--Kl2, Fo--Kl4, Fo--Kl3}, although the technical details are somewhat different than these works.

We believe that Theorem \ref{tMain} can be extended in various directions. As a first generalization one can consider the Heisenberg group $\text{Heis}_\ell$ of order $\ell^3$, where $\ell \geq 3$ is a prime. Our algebraic results are in fact stated in this more general setting. However our analytic results currently use that $\Z[\zeta_3]$ is a principal ideal domain. It is possible to extend our analytic results to any odd prime $\ell$ for which $\Z[\zeta_\ell]$ is a principal ideal domain (so $\ell \in \{3, 5, 7, 11, 13, 17, 19\}$), and perhaps it is possible to extend them to all odd primes $\ell$.

Another direction to consider is to count Heisenberg extensions in the regular representation. The resulting counting function has some similarities to the ones considered in Fouvry--Luca--Pappalardi--Shparlinski \cite{FLPS} and Klys \cite{Klys}. We are optimistic that our techniques also apply here. A final direction that we shall discuss in this introduction is to count extensions by conductor instead of discriminant. This was done in \cite{Conductor} for quartic $D_4$-extensions. Perhaps it is possible to extend our results to this setting as well.

\section*{Acknowledgements}
We thank Carlo Pagano for several inspiring conversations that led to the proof of Theorem \ref{tHeisenberg}. Peter Koymans wishes to thank the Max Planck Institute for Mathematics in Bonn for its financial support, great work conditions and an inspiring atmosphere.

\section{The Heisenberg group}
In this section we develop the algebraic theory for the Heisenberg group ${\rm Heis}_\ell$ of order $\ell^3$ with $\ell \geq 3$ a prime. We start by fixing an algebraic closure $\overline{\Q}$ once and for all. We also fix algebraic closures $\overline{\Q_p}$ for all prime numbers $p$. All our number fields and local fields are implicitly taken inside these fixed algebraic closures. All our cohomology groups have to be interpreted as profinite group cohomology.

\subsection{The different ideal}
For a local or global field $K$, we write $\mathcal{O}_K$ for its ring of integers. If $L/K$ is an extension of local or global fields, we write $\mathfrak{d}_{L/K}$ for the different ideal and $\Delta_{L/K}$ for the relative discriminant. Recall that $\mathfrak{d}_{L/K}$ is an ideal of $L$, while $\Delta_{L/K}$ is an ideal of $K$. Denote by $f_\alpha$ the minimal polynomial of an element $\alpha$ and denote by $e_{\mathfrak{q}/\mathfrak{p}}$ the ramification index of the prime $\mathfrak{q}$ of $L$ lying above a prime $\mathfrak{p}$ of $K$. We now record the following well-known properties of the different ideal.

\begin{lemma}
\label{lDifferent}
Let $L/K$ be an extension of local or global fields. Let $\mathfrak{q}$ be a prime of $L$ and let $\mathfrak{p}$ be the prime of $K$ below $\mathfrak{q}$. The different ideal satisfies the following properties
\begin{enumerate}
\item[(i)] we have ${\rm N}_{L/K}(\mathfrak{d}_{L/K}) = \Delta_{L/K}$;
\item[(ii)] we have $\mathfrak{d}_{M/L} \mathfrak{d}_{L/K} = \mathfrak{d}_{M/K}$;
\item[(iii)] we have $\mathfrak{q} \mid \mathfrak{d}_{L/K}$ if and only if $\mathfrak{q}$ is ramified in $L/K$. Furthermore, in case that $\mathfrak{q}$ is not wildly ramified, we have that $\mathfrak{q}^{e_{\mathfrak{q}/\mathfrak{p}} - 1}$ exactly divides $\mathfrak{d}_{L/K}$;
\item[(iv)] we have
\[
v_\mathfrak{q}(\mathfrak{d}_{L/K}) = v_\mathfrak{q}(\mathfrak{d}_{L_\mathfrak{q}/K_\mathfrak{p}});
\]
\item[(v)] $\mathfrak{d}_{L/K}$ is generated by the elements $f'_\alpha(\alpha)$ as $\alpha$ ranges over all elements of $\mathcal{O}_L$ such that $L = K(\alpha)$. Now suppose additionally that $\alpha$ is an element of $L$ such that $\mathcal{O}_L = \mathcal{O}_K[\alpha]$. Then $\mathfrak{d}_{L/K} = (f'_\alpha(\alpha))$.
\end{enumerate}
\end{lemma}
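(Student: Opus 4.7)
The plan is to reduce every part of the lemma to a computation over an extension of discrete valuation rings, and then invoke classical results from local algebra. The key technical tool is the intrinsic description of the inverse different
\[
\mathfrak{d}_{L/K}^{-1} = \{x \in L : \mathrm{Tr}_{L/K}(x \mathcal{O}_L) \subseteq \mathcal{O}_K\}.
\]
Adopting this as the definition, part (iv) follows immediately from the decomposition $\mathcal{O}_L \otimes_{\mathcal{O}_K} \widehat{\mathcal{O}_{K, \mathfrak{p}}} = \prod_{\mathfrak{q} \mid \mathfrak{p}} \widehat{\mathcal{O}_{L, \mathfrak{q}}}$ together with the corresponding block decomposition of the trace pairing. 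So after passing to completions I can argue throughout in the local setting.

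For (ii), I would exploit the transitivity of the trace, $\mathrm{Tr}_{M/K} = \mathrm{Tr}_{L/K} \circ \mathrm{Tr}_{M/L}$, which unwinds directly into $\mathfrak{d}_{M/K}^{-1} = \mathfrak{d}_{L/K}^{-1} \cdot \mathfrak{d}_{M/L}^{-1}$, equivalent to (ii) after inversion. For (v), in the monogenic case $\mathcal{O}_L = \mathcal{O}_K[\alpha]$ I would compute the dual basis of $1, \alpha, \dots, \alpha^{n-1}$ with respect to the trace pairing and obtain $\mathfrak{d}_{L/K}^{-1} = f'_\alpha(\alpha)^{-1} \mathcal{O}_L$. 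The generation statement for arbitrary $\alpha \in \mathcal{O}_L$ with $L = K(\alpha)$ then follows because every local separable extension with separable residue extension is monogenic; one approximates $\mathcal{O}_L$ by monogenic orders $\mathcal{O}_K[\alpha]$ and sums the principal ideals $(f'_\alpha(\alpha))$.

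With (v) in hand, part (i) is immediate locally: in the monogenic case $\Delta_{L/K} = \mathrm{disc}(f_\alpha) = \pm \, \mathrm{N}_{L/K}(f'_\alpha(\alpha))$, and the global statement follows from multiplicativity of the ideal norm combined with (iv). For (iii), if $\mathfrak{q}$ is unramified I would take $\alpha$ lifting a generator of the residue extension, so $f'_\alpha(\alpha)$ is a unit mod $\mathfrak{q}$ and $\mathfrak{q} \nmid \mathfrak{d}_{L/K}$. In the tame ramified case, after an unramified base change I may assume $\mathcal{O}_{L_\mathfrak{q}} = \mathcal{O}_{K_\mathfrak{p}}[\pi]$ with a uniformizer $\pi$ satisfying $\pi^e = u \pi_K$; applying (v) to the Eisenstein polynomial $x^e - u \pi_K$ yields $v_\mathfrak{q}(\mathfrak{d}_{L_\mathfrak{q}/K_\mathfrak{p}}) = v_\mathfrak{q}(e \pi^{e - 1}) = e - 1$, since $\gcd(e, p) = 1$. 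The wildly ramified case gives $v_\mathfrak{q}(\mathfrak{d}_{L/K}) \geq e$ via a short argument with higher ramification groups, so $\mathfrak{q} \mid \mathfrak{d}_{L/K}$ whenever $\mathfrak{q}$ ramifies.

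The main obstacle here is not mathematical but expository: each of the five statements is a classical fact from the theory of the different ideal, treated in any standard reference on algebraic number theory or local fields. The only real care required is in passing cleanly between the local and the global formulations, which is precisely the content of (iv).
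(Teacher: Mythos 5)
The paper states Lemma~\ref{lDifferent} without proof, treating these as well-known facts; they can all be found in \cite[Chapter~III]{Serre}. Your sketch is a correct and standard derivation and there is no paper argument to compare against: using (iv) to reduce to the complete local case, where monogenicity of $\mathcal{O}_L$ over $\mathcal{O}_K$ holds (the residue fields are finite, hence perfect), the trace-pairing description of $\mathfrak{d}_{L/K}^{-1}$, the dual-basis computation and the discriminant--resultant identity give (i), (ii) and (v), and explicit generators in the unramified and tame totally ramified cases give~(iii).
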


Our next result is known in the case $k = \Q$, but we were unable to find a reference for general $k$.

\begin{lemma}
\label{lBicyclic}
Let $\ell$ be a prime number. Suppose that $K/k$ is an extension of local or global fields such that $\Gal(K/k) \cong \mathbb{F}_\ell^2$. Write $k_1, \dots, k_{\ell + 1}$ for the intermediate fields. Then we have
\[
\Delta_{K/k} = \prod_{i = 1}^{\ell + 1} \Delta_{k_i/k}.
\]
\end{lemma}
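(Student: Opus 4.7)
The plan is to derive the identity from the conductor-discriminant formula, valid for any abelian extension of local or global fields, and to re-organise the resulting product of conductors according to the intermediate fields. Recall that for an abelian extension $L/F$ this formula reads
\[
\Delta_{L/F} = \prod_{\chi \in \widehat{\Gal(L/F)}} \mathfrak{f}(\chi),
\]
where $\mathfrak{f}(\chi)$ denotes the Artin conductor of $\chi$ (with $\mathfrak{f}(1) = (1)$). Applied to $K/k$, it expresses $\Delta_{K/k}$ as a product over all characters of $G := \Gal(K/k) \cong \F_\ell^2$.

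The next step is a purely group-theoretic sorting. Set $H_j := \Gal(K/k_j)$, so that $H_1, \dots, H_{\ell+1}$ are precisely the $\ell + 1$ index-$\ell$ subgroups of $G$. Since $G$ is an $\F_\ell$-vector space of dimension $2$, every non-trivial $\chi \in \widehat{G}$ has kernel of index exactly $\ell$, so $\ker(\chi) = H_j$ for a unique $j$. Conversely, for fixed $j$, the characters of $G$ with $\ker(\chi) = H_j$ correspond bijectively to the non-trivial characters of the quotient $G/H_j \cong \Gal(k_j/k)$, giving $\ell - 1$ characters per $j$. Thus the $\ell^2 - 1$ non-trivial characters of $\widehat{G}$ partition neatly into $\ell + 1$ blocks of size $\ell - 1$, one per intermediate field.

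Applying the conductor-discriminant formula a second time, now to each cyclic extension $k_j/k$ and pulling characters back to $G$, identifies the $j$-th block as exactly $\Delta_{k_j/k}$:
\[
\Delta_{k_j/k} = \prod_{\substack{\chi \in \widehat{G} \\ \ker(\chi) = H_j}} \mathfrak{f}(\chi).
\]
Multiplying over $j$ recovers the product of $\mathfrak{f}(\chi)$ over all non-trivial $\chi \in \widehat{G}$, which by the conductor-discriminant formula for $K/k$ equals $\Delta_{K/k}$.

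The main obstacle, as the authors themselves remark, is the availability of a convenient reference for the conductor-discriminant formula over a general base field $k$, especially a local one. If one prefers to avoid invoking it in that generality, the alternative is to localise via Lemma \ref{lDifferent}(iv) and verify the identity prime-by-prime in the local case. Tame primes are essentially immediate: the inertia is cyclic of order $1$ or $\ell$, and one counts how many of the $H_j$ contain it. For wild primes ($\ell$ equal to the residue characteristic), one works with the higher ramification filtration, using that upper numbering is compatible with quotients, together with the standard formula $v_\mathfrak{q}(\mathfrak{d}_{K/k}) = \sum_{i \geq 0}(|G_i| - 1)$; this is where the real work would lie.
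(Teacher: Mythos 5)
Your proof takes essentially the same route as the paper's: both apply the conductor--discriminant formula to $K/k$, partition the $\ell^2 - 1$ nontrivial characters of $G \cong \F_\ell^2$ into $\ell+1$ blocks of $\ell-1$ according to which cyclic quotient $\Gal(k_j/k)$ they factor through, and identify the $j$-th block's contribution with $\Delta_{k_j/k}$ via a second application of the formula. The only cosmetic difference is that the paper phrases the key point as the invariance of the Artin conductor under restriction along $\Gal(K/k) \twoheadrightarrow \Gal(k_j/k)$ (citing Serre, Ch.~VI, Prop.~6), whereas you phrase it as pulling back characters of $\Gal(k_j/k)$ to $G$ through the kernel; these are the same observation.
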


\begin{proof}
Let for now $K/k$ be any finite Galois extension of local or global fields. The conductor--discriminant formula states that
\begin{align}
\label{eCD}
\mathfrak{d}_{K/k} = \prod_{\chi \in \text{Irr}(G)} \mathfrak{f}(\chi)^{\chi(1)},
\end{align}
where $\text{Irr}(G)$ denotes the set of irreducible characters of $G = \Gal(K/k)$ and $\mathfrak{f}(\chi)$ denotes the Artin conductor of $\chi$, see \cite[Chapter VI]{Serre} for the definition of the Artin conductor.

If $K/k$ is bicyclic, then there are $\ell^2$ irreducible characters. Except for the trivial character, there are $\ell - 1$ non-trivial characters coming from each $\Gal(k_i/k)$ for $i = 1, \dots, \ell + 1$. Choose one non-trivial character $\chi_i$ for $\Gal(k_i/k)$, so that all non-trivial characters for $\Gal(k_i/k)$ are $\chi_i^j$ for $j = 1, \dots, \ell - 1$. It is also proven in \cite[Chapter VI, Proposition 6]{Serre} that the Artin conductor of $\chi_i^j$ is the same as the Artin conductor of $\chi_i^j$ restricted to $\Gal(k_i/k)$. We conclude that
\[
\mathfrak{d}_{K/k} = \prod_{i = 1}^{\ell + 1} \prod_{j = 1}^{\ell - 1} \mathfrak{f}(\chi_i^j) = \prod_{i = 1}^{\ell + 1} \mathfrak{d}_{k_i/k}
\]
by two applications of equation (\ref{eCD}). The lemma follows once we take norms.
\end{proof}

\subsection{General theory}
Let $\ell$ be an odd prime. The Heisenberg group $\text{Heis}_\ell$ is the multiplicative group of upper triangular matrices with coefficients in $\mathbb{F}_\ell$ (and ones on the diagonal). $\text{Heis}_\ell$ is a non-commutative group of size $\ell^3$ with center $Z(\text{Heis}_\ell)$ of size $\ell$. The quotient $\text{Heis}_\ell/Z(\text{Heis}_\ell)$ is bicyclic so that $\text{Heis}_\ell$ is a central $\mathbb{F}_\ell$-extension of $\mathbb{F}_\ell^2$. Furthermore, every element has order $\ell$.

Recall that the central extensions of $\mathbb{F}_\ell^2$ by $\mathbb{F}_\ell$ are parametrized by the group $H^2(\mathbb{F}_\ell^2, \mathbb{F}_\ell)$, where we view $\mathbb{F}_\ell$ as a trivial $\mathbb{F}_\ell^2$-module. Write $\chi_1$ and $\chi_2$ for the two natural projection maps from $\mathbb{F}_\ell^2$ to $\mathbb{F}_\ell$. Then it is shown in \cite[Section 4.1]{Ko--Pa} that the Heisenberg group is precisely realized by the $1$-dimensional subspace of $H^2(\mathbb{F}_\ell^2, \mathbb{F}_\ell)$ generated by the $2$-cocycle $(\sigma, \tau) \mapsto \chi_1(\sigma) \chi_2(\tau)$, which we denote by $\theta_{\chi_1, \chi_2}(\sigma, \tau)$.

The inflation--restriction exact sequence will play an important role throughout this section. Let $G$ be a profinite group, $N$ a normal open subgroup and $A$ a discrete $G$-module. Then the quotient $G/N$ naturally acts on the fixed points $A^N$. We have a long exact sequence
\begin{multline}
\label{eInfRes}
0 \rightarrow H^1(G/N, A^N) \xrightarrow{\text{inf}} H^1(G, A) \xrightarrow{\text{res}} H^1(N, A)^{G/N} \\ 
\xrightarrow{\text{tr}} H^2(G/N, A^N) \xrightarrow{\text{inf}} H^2(G, A).
\end{multline}
Here the map $\text{tr}$ is known as the \emph{transgression} map, while the other maps are the usual \emph{inflation} and \emph{restriction} maps. We remark that $G/N$ naturally acts on $H^1(N, A)$ by sending a cocycle $f: N \rightarrow A$ to $(g * f)(n) = g * f(g^{-1}ng)$.

Over number fields the Heisenberg group is realized as follows. Take two linearly independent characters $\chi, \chi': G_\Q \rightarrow \mathbb{F}_\ell$ and let $K$ be the bicyclic extension given by $\chi$ and $\chi'$. We apply equation (\ref{eInfRes}) with $A = \mathbb{F}_\ell$, $G = G_\Q$ and $N = G_K$, where $G_L$ denotes the absolute Galois group of a field $L$. Here, and for the remainder of this paper, we view $\mathbb{F}_\ell$ as a discrete Galois module with trivial action. In this case we get an isomorphism
\begin{align}
\label{eInfResCor}
\frac{\text{Hom}(G_K, \mathbb{F}_\ell)^{\Gal(K/\Q)}}{\text{Hom}(G_\Q, \mathbb{F}_\ell)} \cong \text{ker}(H^2(\Gal(K/\Q), \mathbb{F}_\ell) \xrightarrow{\text{inf}} H^2(G_\Q, \mathbb{F}_\ell)).
\end{align}
If $K$ is a field and $\chi: G_K \rightarrow \mathbb{F}_\ell$ is a character, we write $K(\chi)$ for the field extension of $K$ corresponding to $\chi$. The space $\text{Hom}(G_K, \mathbb{F}_\ell)^{\Gal(K/\Q)}$ consists of those characters $\chi \in \text{Hom}(G_K, \mathbb{F}_\ell)$ satisfying the following two properties. Firstly, $K(\chi)/\Q$ is a Galois extension. Secondly, there is an exact sequence
\begin{align}
\label{eExt}
1 \rightarrow \Gal(K(\chi)/K) \rightarrow \Gal(K(\chi)/\Q) \rightarrow \Gal(K/\Q) \rightarrow 1
\end{align}
with $\Gal(K(\chi)/K)$ central in $\Gal(K(\chi)/\Q)$. As explained in \cite[Section 4]{Ko--Pa}, the isomorphism in equation (\ref{eInfResCor}) is then explicitly given as follows. Using $\chi$ to identify $\Gal(K(\chi)/K)$ with $\mathbb{F}_\ell$ in equation (\ref{eExt}), we naturally get a class in the second cohomology group $H^2(\Gal(K/\Q), \mathbb{F}_\ell)$.

We conclude that if $\theta_{\chi, \chi'}(\sigma, \tau)$ is trivial in $H^2(G_\Q, \mathbb{F}_\ell)$, there exists an extension $M/\Q$ containing $\Q(\chi, \chi')$ with $\Gal(M/\Q) \cong \text{Heis}_\ell$. Conversely, if there exists such an extension $M/\Q$, then $\theta_{\chi, \chi'}(\sigma, \tau)$ is trivial in $H^2(G_\Q, \mathbb{F}_\ell)$. 

\begin{definition}
For an extension $K/\Q$ with $\Gal(K/\Q) \cong \mathbb{F}_\ell^2$, we define $\textup{Heis}(K/\Q)$ to be the subspace of $\rho \in \textup{Hom}(G_K, \mathbb{F}_\ell)^{\Gal(K/\Q)}$ that maps to the $1$-dimensional subspace of $H^2(\mathbb{F}_\ell^2, \mathbb{F}_\ell)$ generated by the $2$-cocycles $\theta_{\chi_1, \chi_2}(\sigma, \tau)$ under the transgression map. If $\rho \in \textup{Heis}(K/\Q)$ and $\chi: G_\Q \rightarrow \mathbb{F}_\ell$, we call $\rho + \chi \in \textup{Heis}(K/\Q)$ the twist of $\rho$ by $\chi$.
\end{definition}

\begin{remark}
\label{rSelfcup}
The transgression map naturally lands in $H^2(\Gal(K/\Q), \mathbb{F}_\ell)$, not in $H^2(\mathbb{F}_\ell^2, \mathbb{F}_\ell)$. Hence we are implicitly choosing an isomorphism $\Gal(K/\Q) \cong \mathbb{F}_\ell^2$ in the above definition, which allows us to identify 
\[
H^2(\Gal(K/\Q), \mathbb{F}_\ell) \cong H^2(\mathbb{F}_\ell^2, \mathbb{F}_\ell).
\]
Take any character $\chi: \mathbb{F}_\ell^2 \rightarrow \mathbb{F}_\ell$. Observe that the $2$-cocycle $\theta_{\chi, \chi}(\sigma, \tau) \in H^2(\mathbb{F}_\ell^2, \mathbb{F}_\ell)$ is trivialized by the $1$-cochain that sends $\sigma$ to $\chi(\sigma)^2/2$. Using this, we directly verify that the choice of isomorphism does not change the set $\textup{Heis}(K/\Q)$.
\end{remark}

Our final lemma gives a convenient way to decide if two degree $\ell^2$ Heisenberg extensions of $\Q$ are isomorphic.

\begin{lemma}
\label{lHIsoTest}
Let $\ell$ be an odd prime number. Let $L$ and $L'$ be two degree $\ell^2$ extensions of $\Q$ such that the Galois groups of the normal closures $N(L)$ and $N(L')$ are isomorphic to the Heisenberg group ${\rm Heis}_\ell$. Then $L$ and $L'$ are isomorphic if and only if $N(L)$ is isomorphic to $N(L')$ and $L$ and $L'$ contain the same degree $\ell$ subfield.
\end{lemma}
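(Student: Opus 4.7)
The plan is to reduce the lemma to a purely group-theoretic statement inside ${\rm Heis}_\ell$ via the Galois correspondence. My first step is to upgrade the hypothesis $N(L) \cong N(L')$ to actual equality $N(L) = N(L')$ as subfields of $\overline{\Q}$: any abstract $\Q$-isomorphism between two normal extensions inside $\overline{\Q}$ extends to an automorphism of $\overline{\Q}$ which must stabilize each normal extension. Call this common normal closure $N$, set $G := \Gal(N/\Q) \cong {\rm Heis}_\ell$, and let $H, H' \leq G$ be the order-$\ell$ subgroups corresponding to $L, L'$. Then $L \cong L'$ is equivalent to $H$ and $H'$ being conjugate in $G$: any $\Q$-isomorphism $L \to L'$ extends to some $\phi \in \Gal(\overline{\Q}/\Q)$, and by the normality of $N$ its restriction may be viewed as an element of $G$.

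My second step is to pin down where $H, H'$ live and what the degree $\ell$ subfields of $L, L'$ are. Since $N$ is the full normal closure of $L$ and $H$ has prime order, $H$ must be non-normal in $G$; as the center $Z := Z(G)$ is the unique normal subgroup of order $\ell$ in ${\rm Heis}_\ell$, this forces $H, H' \neq Z$. Every subgroup of $G$ of order $\ell^2$ has index $\ell$, hence is normal and contains the commutator subgroup $[G, G] = Z$; by order considerations, $HZ$ is the unique subgroup of order $\ell^2$ containing $H$. Under the Galois correspondence, this shows that $L$ has a single degree $\ell$ subfield, namely $K_H := N^{HZ}$ (and similarly $K_{H'} := N^{H'Z}$), so ``$L$ and $L'$ contain the same degree $\ell$ subfield'' is the condition $HZ = H'Z$. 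The lemma is therefore equivalent to the group-theoretic assertion
\[
H \text{ and } H' \text{ are conjugate in } G \iff HZ = H'Z.
\]

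The remaining work is to establish this equivalence, which I view as the only nontrivial point. One direction is immediate: if $H' = gHg^{-1}$, then normality of $HZ$ in $G$ yields $H'Z = g(HZ)g^{-1} = HZ$. The converse is the main (minor) obstacle. I would argue using the presentation $\langle x, y, z \mid [x, y] = z,\, [x, z] = [y, z] = 1,\, x^\ell = y^\ell = z^\ell = 1 \rangle$ of ${\rm Heis}_\ell$ and the normal form $x^a y^b z^c$ for its elements. A direct computation gives
\[
(x^r y^s z^t)(x^a y^b z^c)(x^r y^s z^t)^{-1} = x^a y^b z^{c + rb - sa}.
\]
For any non-central element $(a, b) \neq (0, 0)$, so $(r, s) \mapsto rb - sa$ is surjective onto $\mathbb{F}_\ell$. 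This shows that all $\ell$ non-central order-$\ell$ subgroups of the abelian group $HZ$ form a single conjugacy class in $G$, proving the converse and finishing the argument.
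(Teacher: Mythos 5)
Your proof is correct and follows essentially the same route as the paper: reduce to the conjugacy of the order-$\ell$ subgroups $H, H'$ inside ${\rm Heis}_\ell$, observe that the shared degree-$\ell$ subfield forces $HZ = H'Z$, and then deduce conjugacy. The only difference is that the paper leaves the final step as ``from the structure of the Heisenberg group, we see that $H$ and $H'$ are conjugate,'' while you supply the explicit conjugation computation $(x^r y^s z^t)(x^a y^b z^c)(x^r y^s z^t)^{-1} = x^a y^b z^{c+rb-sa}$ that makes this assertion transparent.
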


\begin{proof}
Certainly, if $L$ and $L'$ are isomorphic, then $N(L)$ and $N(L')$ are isomorphic. Furthermore, since the degree $\ell$ subfield is Galois over $\Q$, they must be the same. 

Reversely, suppose that $L$ and $L'$ are as in the lemma. By Galois theory, $L$ and $L'$ correspond to non-normal subgroups $H$ and $H'$ of the Heisenberg group $\text{Heis}_\ell$ of order $\ell$. Then, since $L$ and $L'$ contain the same degree $\ell$ subfield, it follows that $H$ and $H'$ together generate a subgroup of order $\ell^2$. From the structure of the Heisenberg group, we see that $H$ and $H'$ are then conjugate in $\text{Heis}_\ell$. This implies that $L$ and $L'$ are isomorphic.
\end{proof}

\subsection{\texorpdfstring{Heisenberg extensions of $\Q_\ell$}{Heisenbergextensions of Ql}}
Let us first analyze the situation locally at $\ell$. Since every element of $\text{Heis}_\ell$ has order $\ell$, its ramification theory is relatively simple. We further profit from the fact that $\Q_\ell$ has only two linearly independent cyclic degree $\ell$ extensions unlike $\Q_2$.

\begin{lemma}
\label{lMichailov}
Let $K$ be any field of characteristic $0$ containing a primitive $\ell$-th root of unity $\zeta_\ell$. For $\alpha \in K^\ast$, we write $\chi_\alpha$ for a character corresponding to $K(\sqrt[\ell]{\alpha})$. Then $\theta_{\chi_\alpha, \chi_\beta}$ is trivial in $H^2(G_K, \mathbb{F}_\ell)$ if and only if there exists $\omega \in K(\sqrt[\ell]{\alpha})$ such that ${\rm N}_{K(\sqrt[\ell]{\alpha})/K}(\omega) = \beta$. In this case the Heisenberg extension can be obtained by adjoining the $\ell$-th root of the element
\[
\prod_{i = 0}^{\ell - 2} \sigma^i(\omega^{\ell - i - 1})
\]
to $K(\chi_\alpha, \chi_\beta)$, where $\sigma$ is a generator of $\Gal(K(\sqrt[\ell]{\alpha})/K)$.
\end{lemma}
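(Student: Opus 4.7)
The plan is to reduce the triviality of $\theta_{\chi_\alpha, \chi_\beta}$ in $H^2(G_K, \mathbb{F}_\ell)$ to the classical norm-residue criterion for cyclic algebras, and then to verify the explicit Kummer-theoretic construction by a direct telescoping computation. First I would identify $\theta_{\chi_\alpha, \chi_\beta}$ with a cup product: since $K$ contains $\zeta_\ell$, the choice of this primitive root identifies $\mu_\ell \cong \mathbb{F}_\ell$ as Galois modules, and Kummer theory then identifies $H^1(G_K, \mathbb{F}_\ell)$ with $K^\ast/K^{\ast \ell}$, sending $\chi_\alpha \leftrightarrow \alpha$ and $\chi_\beta \leftrightarrow \beta$. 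The standard inhomogeneous cocycle representing $\chi_\alpha \cup \chi_\beta \in H^2(G_K, \mathbb{F}_\ell)$ is precisely $(\sigma, \tau) \mapsto \chi_\alpha(\sigma) \chi_\beta(\tau) = \theta_{\chi_\alpha, \chi_\beta}(\sigma, \tau)$, so triviality of $\theta_{\chi_\alpha, \chi_\beta}$ is the vanishing of $\chi_\alpha \cup \chi_\beta$ in $H^2(G_K, \mathbb{F}_\ell) \cong \textup{Br}(K)[\ell]$. Under this identification the cup product represents the cyclic algebra $(\alpha, \beta)_{\zeta_\ell}$, and the classical norm-residue criterion---a consequence of Hilbert 90 applied to $F^\ast$, where $F := K(\sqrt[\ell]{\alpha})$---asserts that this algebra splits in the Brauer group if and only if $\beta \in N_{F/K}(F^\ast)$. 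This settles the ``if and only if'' part.

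For the explicit construction, given $\omega \in F$ with $N_{F/K}(\omega) = \beta$, the key identity is obtained by a telescoping calculation: with $\gamma := \prod_{i=0}^{\ell - 2} \sigma^i(\omega)^{\ell - i - 1}$, the expression $\sigma(\gamma)/\gamma$ collapses to
\[
\frac{\sigma(\gamma)}{\gamma} = \omega^{-(\ell-1)} \cdot \sigma(\omega)\sigma^2(\omega) \cdots \sigma^{\ell-1}(\omega) = \omega^{-\ell} \cdot N_{F/K}(\omega) = \beta\,\omega^{-\ell}.
\]
Because $\beta$ is an $\ell$-th power in $L := F(\sqrt[\ell]{\beta})$, this ratio lies in $L^{\ast \ell}$, so $M := L(\sqrt[\ell]{\gamma})/K$ is Galois. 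To identify $\Gal(M/K)$ with $\textup{Heis}_\ell$, I would choose lifts $\tilde{\sigma}, \tilde{\tau}$ of generators of $\Gal(L/K)$ with $\tilde{\sigma}(\sqrt[\ell]{\gamma}) = \sqrt[\ell]{\gamma} \cdot \sqrt[\ell]{\beta}/\omega$ and $\tilde{\tau}(\sqrt[\ell]{\gamma}) = \sqrt[\ell]{\gamma}$, and check by direct computation that the commutator $[\tilde{\tau}, \tilde{\sigma}]$ acts on $\sqrt[\ell]{\gamma}$ as a non-trivial $\ell$-th root of unity while fixing $L$. This yields a non-trivial central element of order $\ell$, and since every Kummer generator has order $\ell$ the group has exponent $\ell$; combined with non-abelianness and order $\ell^3$, this forces the group to be $\textup{Heis}_\ell$.

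The main obstacle is the last group-theoretic step: one must verify both the commutator identity above and the non-degeneracy $\gamma \notin L^{\ast \ell}$, without which the adjunction of $\sqrt[\ell]{\gamma}$ collapses to a strictly smaller extension and the alleged Heisenberg group degenerates. Non-degeneracy requires the $\mathbb{F}_\ell$-linear independence of $\alpha, \beta$ in $K^\ast/K^{\ast \ell}$, which is anyway the only case where a genuine Heisenberg extension above $K(\sqrt[\ell]{\alpha}, \sqrt[\ell]{\beta})$ can exist. Given independence, a short Hilbert 90 argument applied to $L^\ast$ rules out $\gamma$ being an $\ell$-th power in $L$, after which the commutator calculation delivers the Heisenberg identification and completes the proof.
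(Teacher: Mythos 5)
The paper does not prove Lemma~\ref{lMichailov} at all; it simply cites Michailov, \emph{J.~Algebra} 307 (2007), Theorem~3.1. So there is no argument in the paper to compare against, and your proposal must be judged on its own.

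Your reduction of the first part to the norm-residue criterion is exactly right: under the identification $\mu_\ell \cong \mathbb{F}_\ell$, the $2$-cocycle $\theta_{\chi_\alpha,\chi_\beta}$ is the cup product $\chi_\alpha \cup \chi_\beta$, whose class is the cyclic algebra $(\alpha,\beta)_{\zeta_\ell}$, and splitting is equivalent to $\beta \in N_{F/K}(F^\ast)$ with $F = K(\sqrt[\ell]{\alpha})$. The telescoping identity $\sigma(\gamma)/\gamma = \beta\,\omega^{-\ell}$ checks out, and it does establish that $M = L(\sqrt[\ell]{\gamma})/K$ is Galois (since $\tau(\gamma)=\gamma$ and $\sigma(\gamma)/\gamma = (\sqrt[\ell]{\beta}/\omega)^\ell \in L^{\ast\ell}$). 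Your order-$\ell$ check $\tilde\sigma^\ell = \mathrm{id}$ also works, via $\tilde\sigma^\ell(\sqrt[\ell]{\gamma}) = \sqrt[\ell]{\gamma}\cdot \beta/N_{F/K}(\omega) = \sqrt[\ell]{\gamma}$, and then the classification of groups of order $\ell^3$ (for $\ell$ odd, the non-Heisenberg non-abelian group has a characteristic index-$\ell$ subgroup containing all elements of order dividing $\ell$, so it cannot be generated by two order-$\ell$ elements) forces $\Gal(M/K) \cong \mathrm{Heis}_\ell$.

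The one point you flag as an obstacle, $\gamma \notin L^{\ast\ell}$, does admit a short Kummer-theoretic argument, so there is no real gap. First, $\gamma \notin F^{\ast\ell}$: otherwise $\sigma(\gamma)/\gamma = \beta/\omega^\ell$ would lie in $F^{\ast\ell}$, forcing $\beta \in F^{\ast\ell}$ and contradicting independence of $\alpha,\beta$. Next, if $\gamma \in L^{\ast\ell}\setminus F^{\ast\ell}$, then $F(\sqrt[\ell]{\gamma})$ is a degree-$\ell$ subextension of $L/F$, hence equals $L = F(\sqrt[\ell]{\beta})$, so by Kummer theory $\gamma \equiv \beta^j \pmod{F^{\ast\ell}}$ for some $j \not\equiv 0 \bmod\ell$. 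Applying $\sigma - 1$ and using $\sigma(\beta)=\beta$ gives $\beta/\omega^\ell = \sigma(\gamma)/\gamma \in F^{\ast\ell}$, again forcing $\beta \in F^{\ast\ell}$, a contradiction. With that filled in, your argument is complete; it is the expected self-contained proof of what the paper outsources to Michailov.
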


\begin{proof}
This is \cite[Theorem 3.1]{Michailov}.
\end{proof}

\begin{theorem}
\label{tHeisenberg}
There exists precisely one extension $M/\Q_\ell$ such that $\Gal(M/\Q_\ell)$ is isomorphic to $\textup{Heis}_\ell$. Its discriminant ideal equals
\[
(\ell)^{\ell(\ell + 1)(2\ell - 2)}.
\]
\end{theorem}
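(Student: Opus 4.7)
My plan is to split the statement into existence/uniqueness and the discriminant computation, using the conductor--discriminant formula to reduce the latter to a ramification analysis of $M/\Q_\ell$.

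For existence and uniqueness, I would first note that the bicyclic extension $K/\Q_\ell$ with $\Gal(K/\Q_\ell) \cong \mathbb{F}_\ell^2$ is unique, since $\dim_{\mathbb{F}_\ell}\Q_\ell^*/(\Q_\ell^*)^\ell = 2$ (generated by $\ell$ and $1+\ell$), and any Heisenberg extension must contain this $K$ as the fixed field of the center. I would then apply the inflation--restriction sequence \eqref{eInfRes} with $G = G_{\Q_\ell}$, $N = G_K$, $A = \mathbb{F}_\ell$. Two inputs collapse the sequence: first, $H^2(G_{\Q_\ell}, \mathbb{F}_\ell) = 0$ by local duality since $\mu_\ell \not\subset \Q_\ell$; and second, every $\mathbb{F}_\ell$-character of $G_{\Q_\ell}$ factors through $\Gal(K/\Q_\ell)$, so the restriction map $H^1(G_{\Q_\ell}, \mathbb{F}_\ell) \to H^1(G_K, \mathbb{F}_\ell)^{\Gal(K/\Q_\ell)}$ vanishes. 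Transgression is then an isomorphism $\text{Hom}(G_K, \mathbb{F}_\ell)^{\Gal(K/\Q_\ell)} \cong H^2(\mathbb{F}_\ell^2, \mathbb{F}_\ell)$, and the preimage $\textup{Heis}(K/\Q_\ell)$ of the one-dimensional Heisenberg subspace is itself one-dimensional. This pins down a unique $\rho$ (up to scaling) and hence a unique Heisenberg extension $M/\Q_\ell$.

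For the discriminant I would apply the conductor--discriminant formula to $M/\Q_\ell$. The $\ell^2$ one-dimensional characters of $\textup{Heis}_\ell$ factor through $\Gal(K/\Q_\ell)$ and contribute $\Delta_{K/\Q_\ell}$; by Lemma \ref{lBicyclic} this equals $\prod_{i=1}^{\ell+1}\Delta_{k_i/\Q_\ell} = (\ell)^{2\ell(\ell-1)}$, since exactly one of the $\ell+1$ degree-$\ell$ subextensions is unramified and the remaining $\ell$ are totally ramified cyclic with conductor exponent $2$ (because $(U^{(1)}_{\Q_\ell})^\ell = U^{(2)}_{\Q_\ell}$ for $\ell \geq 3$). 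It remains to compute the contribution of the $\ell-1$ faithful $\ell$-dimensional representations, which reduces to determining the ramification filtration of $M/\Q_\ell$.

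I would set up this filtration as follows. The only cyclic quotient of $\textup{Heis}_\ell$ is $\Z/\ell$, and since $K_0 \subset K \subset M$, the maximal unramified subextension of $M$ is exactly $K_0$; hence $e_{M/\Q_\ell} = \ell^2$, $f_{M/\Q_\ell} = \ell$, the inertia $G_0 = G_1 = A := \Gal(M/K_0)$ has order $\ell^2$, and $M/K$ is totally ramified cyclic of degree $\ell$. Because each $G_i$ is normal in $\textup{Heis}_\ell$ and the only normal subgroups of $\textup{Heis}_\ell$ contained in $A$ are $\{1\}, Z, A$, the filtration has the shape $A \supseteq Z \supseteq 1$. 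Upper-numbering functoriality applied to the quotient $M \to K$, combined with the upper break of $K/\Q_\ell$ lying at $v = 1$ (as its ramified cyclic subextensions have conductor exponent $2$), forces $G^v(M/\Q_\ell) \subseteq Z$ for all $v > 1$. The hard part is ruling out a second, deeper upper break, i.e.\ showing $G^v(M/\Q_\ell) = 1$ for $v > 1$; I would try to establish this either by constructing $M$ explicitly via Lemma \ref{lMichailov} over the base change $F = \Q_\ell(\zeta_\ell)$ and analyzing the Kummer generator $\gamma^{1/\ell}$ through Lemma \ref{lDifferent}(v), or by computing $v_K(\Delta_{M/K})$ directly from the Heisenberg character $\rho$ and matching via the tower formula. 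Once $G_i = 1$ for $i \geq 2$ is in place, the standard valuation formula $v_M(\mathfrak{d}_{M/\Q_\ell}) = \sum_i (|G_i| - 1)$ yields $v_M(\mathfrak{d}_{M/\Q_\ell}) = 2(\ell^2 - 1)$, and applying $v_\ell(\Delta_{M/\Q_\ell}) = f_{M/\Q_\ell} \cdot v_M(\mathfrak{d}_{M/\Q_\ell})$ produces $v_\ell(\Delta_{M/\Q_\ell}) = \ell \cdot 2(\ell^2 - 1) = \ell(\ell+1)(2\ell - 2)$, as required.
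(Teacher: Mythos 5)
Your existence and uniqueness argument takes a genuinely cleaner route than the paper's. The paper establishes existence by restricting to $\Q_\ell(\zeta_\ell)$, invoking surjectivity of the local norm map, and applying Lemma \ref{lMichailov}; your observation that $H^2(G_{\Q_\ell},\mathbb{F}_\ell)=0$ by local Tate duality (since $\mu_\ell(\Q_\ell)=1$) makes the inflation map trivially zero, so the transgression in \eqref{eInfRes} is an isomorphism and both existence and uniqueness drop out at once. This is a real simplification and the duality claim is correct. For the discriminant you switch from the paper's direct computation of $\mathfrak{d}_{L(\sqrt[\ell]{\omega_3})/L}$ via an explicit Eisenstein generator to the conductor--discriminant formula plus ramification filtrations. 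Your bookkeeping is right: the $\ell^2$ abelian characters contribute $(\ell)^{2\ell(\ell-1)}$, the $\ell-1$ faithful $\ell$-dimensional representations must each have Artin conductor exponent $2\ell$, which by $f(\chi)=\sum_i\frac{|G_i|}{|G_0|}\dim(V/V^{G_i})$ and $V^{Z}=0$ is equivalent to $G_0=G_1$ of order $\ell^2$ and $G_2=1$, and $2(\ell^2-1)=(\ell+1)(2\ell-2)$ so the numbers close. Herbrand's theorem applied to the quotient $M\twoheadrightarrow K$ correctly restricts any later breaks to lie inside $Z$.

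However, there is a genuine gap: the decisive step, $G^v(M/\Q_\ell)=1$ for $v>1$ (equivalently $G_2=1$, equivalently $v_M(\mathfrak{d}_{M/K})=2(\ell-1)$), is exactly where all the difficulty lives, and you leave it unproved. Your first proposed fix is to construct $M$ explicitly via Lemma \ref{lMichailov} over $\Q_\ell(\zeta_\ell)$ and apply Lemma \ref{lDifferent}(v) to the Kummer generator --- but that is precisely the computation the paper carries out (the normalization $\omega\mapsto\omega_1\mapsto\omega_2\mapsto\omega_3$, the nondegeneracy argument with the circulant matrix \eqref{eMatrixForm}, and the Eisenstein criterion), so this is not an alternative but the same hard work, and crucially the explicit generator is then doing the heavy lifting rather than the conductor--discriminant formula. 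Your second proposed fix, ``computing $v_K(\Delta_{M/K})$ directly from the Heisenberg character $\rho$,'' is not an argument: local class field theory describes $\rho|_{G_K}$ as a character of $K^\ast$, but determining that it has conductor exponent exactly $1$ (rather than, say, $2$, which would give a deeper break and a larger discriminant) requires knowing fine information about the image of norms from $M$ in $K^\ast$, which is essentially the content of the Kummer-theoretic analysis. In short, you have correctly reorganized the problem around a single numerical invariant, and the arithmetic surrounding that invariant is all correct, but the invariant itself is not computed.
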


\begin{proof}
Since $\Q_\ell^\ast/\Q_\ell^{\ast \ell}$ is a $2$-dimensional vector space, it follows from local class field theory that there are two linearly independent characters $G_{\Q_\ell} \rightarrow \mathbb{F}_\ell$. In particular it follows that there is precisely one extension $K$ of $\Q_\ell$ with $\Gal(K/\Q_\ell) \cong \mathbb{F}_\ell^2$. We apply the inflation--restriction long exact sequence, see (\ref{eInfRes}), to deduce that
\[
\frac{\text{Hom}(G_K, \mathbb{F}_\ell)^{\Gal(K/\Q_\ell)}}{\text{Hom}(G_{\Q_\ell}, \mathbb{F}_\ell)} \cong \text{ker}(H^2(\Gal(K/\Q_\ell), \mathbb{F}_\ell) \xrightarrow{\text{inf}} H^2(G_{\Q_\ell}, \mathbb{F}_\ell)).
\]
The image of $\text{Hom}(G_{\Q_\ell}, \mathbb{F}_\ell)$ in $\text{Hom}(G_K, \mathbb{F}_\ell)$ is trivial, since $K$ is the maximal elementary abelian extension of $\Q_\ell$ with exponent $\ell$. Then we get an isomorphism
\[
\text{Hom}(G_K, \mathbb{F}_\ell)^{\Gal(K/\Q_\ell)} \cong \text{ker}(H^2(\Gal(K/\Q_\ell), \mathbb{F}_\ell) \xrightarrow{\text{inf}} H^2(G_{\Q_\ell}, \mathbb{F}_\ell)).
\]
But recall that the Heisenberg extensions form an $1$-dimensional subspace of
\[
H^2(\Gal(K/\Q_\ell), \mathbb{F}_\ell) \cong H^2(\mathbb{F}_\ell^2, \mathbb{F}_\ell).
\]
This shows that there is at most one such extension $M$. 

Let $\chi_{\text{un}}: G_{\Q_\ell} \rightarrow \mathbb{F}_\ell$ be a non-trivial unramified character and let $\chi_{\text{ram}}: G_{\Q_\ell} \rightarrow \mathbb{F}_\ell$ be a ramified character. The existence of $M$ is equivalent to the vanishing of $\theta_{\chi_{\text{un}}, \chi_{\text{ram}}}(\sigma, \tau)$ in $H^2(G_{\Q_\ell}, \mathbb{F}_\ell)$. There are natural maps
\[
H^2(G_{\Q_\ell}, \mathbb{F}_\ell) \xrightarrow{\text{res}} H^2(G_{\Q_\ell(\zeta_\ell)}, \mathbb{F}_\ell) \xrightarrow{\text{cores}} H^2(G_{\Q_\ell}, \mathbb{F}_\ell).
\]
The composition $\text{cores} \circ \text{res}$ is multiplication by $[\Q_\ell(\zeta_\ell) : \Q_\ell] = \ell - 1$. Hence the map $\text{res}$ is injective. Over $\Q_\ell(\zeta_\ell)$ we see that $\chi_{\text{ram}}$ is in the span of $\chi_{\text{un}}$ and the character $\chi_{\zeta_\ell}$ corresponding to the extension $\Q_\ell(\zeta_{\ell^2})/\Q_\ell(\zeta_\ell)$. By local class field theory we know that the norm map $\mathcal{O}_{\Q_\ell(\zeta_\ell)(\chi_{\text{un}})}^\ast \rightarrow \mathcal{O}_{\Q_\ell(\zeta_\ell)}^\ast$ is surjective. Since $\zeta_\ell$ is a unit, it follows from Lemma \ref{lMichailov} that $\theta_{\chi_{\text{un}}, \chi_{\text{ram}}}(\sigma, \tau)$ is trivial in $H^2(G_{\Q_\ell}, \mathbb{F}_\ell)$ as desired.

We now compute the discriminant of $M$. Define $L := \Q_\ell(\zeta_\ell)(\chi_{\text{un}})$. Take $\omega$ to be an element of $\mathcal{O}_L^\ast$ such that ${\rm N}_{L/\Q_\ell(\zeta_\ell)}(\omega) = \zeta_\ell$. Observe that $\zeta_\ell - 1$ is a uniformizer of $\Q_\ell(\zeta_\ell)$ and therefore also of $L$. Now we expand
\[
\omega = a_0 + a_1 (\zeta_\ell - 1) + a_2 (\zeta_\ell - 1)^2 + \cdots,
\]
where the digits $a_i$ are the Teichm\"uller lifts of $\mathbb{F}_{\ell^\ell}$ in $L$. Then 
\[
{\rm N}_{L/\Q_\ell(\zeta_\ell)}(\omega) = \zeta_\ell = 1 + (\zeta_\ell - 1)
\]
implies that
\[
a_0 \sigma(a_0) \cdot \ldots \cdot \sigma^{\ell - 1}(a_0) = 1
\]
with $\sigma$ a generator of $\Gal(L/\Q_\ell(\zeta_\ell))$. Now define $\omega_1 := \omega/a_0$, which still satisfies ${\rm N}_{L/\Q_\ell(\zeta_\ell)}(\omega_1) = \zeta_\ell$. Expand $\omega_1$ as
\[
\omega_1 = 1 + b_1 (\zeta_\ell - 1) + b_2 (\zeta_\ell - 1)^2 + \cdots.
\]
From ${\rm N}_{L/\Q_\ell(\zeta_\ell)}(\omega_1) = \zeta_\ell = 1 + (\zeta_\ell - 1)$ we deduce that
\begin{align}
\label{eHyperplane}
\sum_{i = 0}^{\ell - 1} \sigma^i(b_1) = 1.
\end{align}
Consider the element
\[
\prod_{i = 0}^{\ell - 2} \sigma^i(\omega_1^{\ell - i - 1}) = 1 + \left(\sum_{i = 0}^{\ell - 2} (\ell - i - 1) \sigma^i(b_1)\right)(\zeta_\ell - 1) + \cdots
\]
We claim that
\[
\sum_{i = 0}^{\ell - 2} (\ell - i - 1) \sigma^i(b_1)
\]
does not reduce to an element in $\mathbb{F}_\ell$ modulo the maximal ideal of $\mathcal{O}_L$. Suppose otherwise. Write $\text{red}_L$ for this natural reduction map. Take a normal basis $\eta, \sigma(\eta), \dots, \sigma^{\ell - 1}(\eta)$ of the field extension $\mathbb{F}_{\ell^\ell}/\mathbb{F}_\ell$, where we continue to write $\sigma$ for the natural induced automorphism of $\Gal(\mathbb{F}_{\ell^\ell}/\mathbb{F}_\ell)$ by $\sigma$. We now study the linear map $A: \mathbb{F}_{\ell^\ell} \rightarrow \mathbb{F}_{\ell^\ell}$ given by
\[
b \mapsto \sum_{i = 0}^{\ell - 2} (\ell - i - 1) \sigma^i(b).
\]
With respect to the basis $\eta, \sigma(\eta), \dots, \sigma^{\ell - 1}(\eta)$ our linear map $A$ becomes
\begin{align}
\label{eMatrixForm}
\begin{pmatrix}
\ell - 1 & 0 & \cdots & \ell - 2 \\
\ell - 2 & \ell - 1 & \cdots & \ell - 3 \\
\vdots  & \vdots  & \ddots & \vdots  \\
1 & 2 & \cdots & 0 \\
0 & 1 & \cdots & \ell - 1 
\end{pmatrix}
\end{align}
in matrix form. In view of equation (\ref{eHyperplane}), we get the desired contradiction if we are able to show that
\[
\{v \in \mathbb{F}_\ell^\ell : Av \in \langle (1, \dots, 1) \rangle\} \subseteq \{(x_1, \dots, x_n) : x_1 + \dots + x_n = 0\}.
\]
We have a decomposition
\[
\{v \in \mathbb{F}_\ell^\ell : Av \in \langle (1, \dots, 1) \rangle\} = \text{ker}(A) \oplus \langle (-1, 1, 0, \dots, 0) \rangle.
\]
Since $(-1, 1, 0, \dots, 0)$ is in the sum zero space, it suffices to prove that
\[
\text{ker}(A) \subseteq \{(x_1, \dots, x_n) : x_1 + \dots + x_n = 0\},
\]
which is equivalent to
\[
\text{im}(A^T) \supseteq \langle (1, \dots, 1) \rangle
\]
after taking orthogonal complements. This is indeed the case as we can see from multiplying the vector $(1,- 1, 0, \dots, 0)$ with the matrix in equation (\ref{eMatrixForm}). 

Having established the claim, we write
\[
\omega_2 := \prod_{i = 0}^{\ell - 2} \sigma^i(\omega_1^{\ell - i - 1}), \quad \omega_2 = 1 + c_1 (\zeta_\ell - 1) + c_2 (\zeta_\ell - 1)^2 + \cdots,
\]
where $\text{red}_L(c_1) \not \in \mathbb{F}_\ell$. From Lemma \ref{lMichailov} we see that 
\[
L(\zeta_{\ell^2}, \sqrt[\ell]{\omega_2})/\Q_\ell(\zeta_\ell)
\]
is a Heisenberg extension. But so is the extension $M\Q_\ell(\zeta_\ell)/\Q_\ell(\zeta_\ell)$. Then, by the long exact sequence (\ref{eInfRes}) it follows that 
\[
L(\zeta_{\ell^2}, \sqrt[\ell]{t \omega_2}) = M\Q_\ell(\zeta_\ell)
\]
for some twist $t \in \Q_\ell(\zeta_\ell)^\ast$.

Suppose that $t = (\zeta_\ell - 1)^s \cdot u$, where $u \in \mathcal{O}_{\Q_\ell(\zeta_\ell)}^\ast$ and $s \in \Z$. We claim that $\ell \mid s$. Assume for the sake of contradiction that $\ell \nmid s$. Denote by $\rho$ the automorphism of $L$ that sends $\zeta_{\ell}$ to $\zeta_{\ell}^2$ but fixes the field corresponding to $\chi_{\text{un}}$. We claim that $L(\zeta_{\ell^2})$, $L(\sqrt[\ell]{t \omega_2})$ and $L(\sqrt[\ell]{\rho(t \omega_2)})$ are three independent extensions in this case, which is impossible as $M\Q_\ell(\zeta_\ell)/L$ is bicyclic. Indeed, suppose that
\[
\zeta_\ell^{x_1} (t \omega_2)^{x_2} (\rho(t \omega_2))^{x_3} \in L^{\ast \ell}.
\]
Inspecting valuations, we certainly find that $x_2 + x_3 \equiv 0 \bmod \ell$. Then modulo $\ell$-th powers, the above becomes
\[
\zeta_\ell^{x_1} \omega_2^{x_2} \rho(\omega_2)^{x_3} u' \in L^{\ast \ell}
\]
with $u' \in \mathcal{O}_{\Q_\ell(\zeta_\ell)}^\ast$, which we expand as
\[
\zeta_\ell^{x_1} u' \cdot \left(1 + c_1(x_2 + x_3(\zeta_\ell + 1)) (\zeta_\ell - 1) + \cdots\right).
\]
Since $\text{red}_L(c_1) \not \in \mathbb{F}_\ell$, it follows that 
\[
\text{red}_L(c_1(x_2 + x_3(\zeta_\ell + 1))) \not \in \mathbb{F}_\ell \quad \text{ or } \quad \text{red}_L(x_2 + x_3(\zeta_\ell + 1)) = 0.
\]
We first dispose with the second case. But $\text{red}_L(x_2 + x_3(\zeta_\ell + 1)) = 0$ implies that $x_2 + 2x_3 \equiv 0 \bmod \ell$ and hence $x_2 \equiv x_3 \equiv 0 \bmod \ell$. In this case we conclude that 
\[
x_1 \equiv x_2 \equiv x_3 \equiv 0 \bmod \ell
\]
as desired. From now on we suppose that
\[
\text{red}_L(c_1(x_2 + x_3(\zeta_\ell + 1))) \not \in \mathbb{F}_\ell.
\]
In this case $\zeta_\ell^{x_1} \omega_2^{x_2} \rho(\omega_2)^{x_3} u'$ is of the shape
\begin{align}
\label{eU1}
d_0 + d_1 (\zeta_\ell - 1) + \cdots \quad \text{ with } \text{red}_L(d_0) \in \mathbb{F}_\ell \setminus \{0\} \text{ and } \text{red}_L(d_1) \not \in \mathbb{F}_\ell,
\end{align}
where the digits $d_i$ are the Teichm\"uller lifts. We claim that such elements are never $\ell$-th powers in $L$. Suppose that $\alpha$ is such an element and consider the polynomial
\[
f(x) = x^\ell - \alpha.
\]
Then $f(x + d_0)$ is irreducible by Eisenstein's criterion. This finishes the proof of both claims, and we conclude that $\ell \mid s$. Furthermore, $\omega_3 := \frac{t \omega_2}{(1 - \zeta_\ell)^s}$ has an expansion of the shape displayed in equation (\ref{eU1}).

Finally, we compute the discriminant of the extension $L(\sqrt[\ell]{\omega_3})/L$. We just showed that
\[
f(x + d_0) = (x + d_0)^\ell - \omega_3 = -\omega_3 + \sum_{i = 0}^\ell \binom{\ell}{i} x^i d_0^{\ell - i}
\]
is Eisenstein, i.e. $f(x + d_0)$ satisfies Eisenstein's criterion. Write $r$ for a root of the polynomial $f(x + d_0)$. Since $f(x + d_0)$ is Eisenstein, it follows that
\[
\mathcal{O}_{L(\sqrt[\ell]{\omega_3})} = \mathcal{O}_L[r],
\]
so we are in the position to apply Lemma \ref{lDifferent} part (v). We conclude that
\[
\mathfrak{d}_{L(\sqrt[\ell]{\omega_3})/L} = \left(\sum_{i = 1}^{\ell} \binom{\ell}{i} i r^{i - 1} d_0^{\ell - i}\right) = (\ell).
\]
By construction we have that $L(\zeta_{\ell^2}, \sqrt[\ell]{\omega_3}) = M\Q_\ell(\zeta_\ell)$. Then there exists some degree $\ell$ cyclic extension $M'$ of $\Q_\ell(\chi_{\text{un}})$ such that the Galois closure of $M'$ is $M$ and furthermore $M' \subseteq L(\sqrt[\ell]{\omega_3})$. This implies that
\[
\Delta_{M'/\Q_\ell(\chi_{\text{un}})}^{\ell - 1} {\rm N}_{M'/\Q_\ell(\chi_{\text{un}})}(\Delta_{L(\sqrt[\ell]{\omega_3})/M'}) = \Delta_{L/\Q_\ell(\chi_{\text{un}})}^\ell {\rm N}_{L/\Q_\ell(\chi_{\text{un}})}(\Delta_{L(\sqrt[\ell]{\omega_3})/L})
\]
The extensions $L(\sqrt[\ell]{\omega_3})/M'$ and $L/\Q_\ell(\chi_{\text{un}})$ are tamely ramified and of degree $\ell - 1$. We conclude that
\[
\Delta_{M'/\Q_\ell(\chi_{\text{un}})}^{\ell - 1} \cdot (\ell)^{\ell - 2} = (\ell)^{(\ell - 2) \ell} \cdot (\ell)^{(\ell - 1) \ell}
\]
and hence
\begin{align}
\label{eLBad}
\Delta_{M'/\Q_\ell(\chi_{\text{un}})} = (\ell)^{2\ell - 2}.
\end{align}
Lemma \ref{lBicyclic} yields
\[
\Delta_{M/\Q_\ell(\chi_{\text{un}})} = \prod_{i = 1}^{\ell + 1} \Delta_{M_i/\Q_\ell(\chi_{\text{un}})},
\]
where the $M_i$ are the subfields $\Q_\ell(\chi_{\text{un}}) \subsetneq M_i \subsetneq M$ of the bicyclic extension $M/\Q_\ell(\chi_{\text{un}})$. One of the $M_i$ is the field $\Q_\ell(\chi_{\text{un}}, \chi_{\text{ram}})$, while the other $M_i$ are all isomorphic to $M'$ by Lemma \ref{lHIsoTest}. We deduce that
\[
\Delta_{M/\Q_\ell(\chi_{\text{un}})} = (\ell)^{\ell(2\ell - 2)} \cdot \Delta_{\Q_\ell(\chi_{\text{un}}, \chi_{\text{ram}})/\Q_\ell(\chi_{\text{un}})} = (\ell)^{(\ell + 1)(2\ell - 2)}
\]
as desired.
\end{proof}

\subsection{Minimal Heisenberg extensions}
In this subsection we will study Heisenberg extensions from a global perspective. We start by defining minimal Heisenberg extensions, which is analogous to the definition of minimal dihedral extensions given by Stevenhagen \cite{Stevenhagen}.

\begin{definition}
Let $\chi, \chi': G_\Q \rightarrow \mathbb{F}_\ell$ be two linearly independent characters. Let $M$ be a Heisenberg extension of $\Q$ containing $\Q(\chi, \chi')$. We say that $M$ is minimal if the following two conditions are satisfied
\begin{itemize}
\item $M$ is unramified at every place $v$ that is unramified in $\Q(\chi, \chi')$;
\item $M/\Q(\chi, \chi')$ is unramified at all primes above $\ell$ in case $\ell$ has residue field degree $1$ in $\Q(\chi, \chi')$.
\end{itemize}
\end{definition}

Suppose that the residue field degree of $\ell$ in $\Q(\chi, \chi')$ is $1$ and further assume that $\ell$ ramifies in $\Q(\chi, \chi')$. As we shall see, the second condition is then automatically satisfied for all Heisenberg extensions $M$ containing $\Q(\chi, \chi')$. From this it follows that any Heisenberg extension $M$ that satisfies the first condition also satisfies the second condition.

\begin{lemma}
\label{lHeisenbergExists}
Let $\chi, \chi': G_\Q \rightarrow \mathbb{F}_\ell$ be two linearly independent characters. Then $\theta_{\chi, \chi'}(\sigma, \tau)$ is trivial in $H^2(G_\Q, \mathbb{F}_\ell)$ if and only if all ramified primes not equal to $\ell$ have residue field degree $1$ in $\Q(\chi, \chi')$.
\end{lemma}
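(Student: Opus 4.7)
The plan is to apply a local--global principle to $\theta_{\chi,\chi'}$ and then carry out local analysis at each place. As in the proof of Theorem \ref{tHeisenberg}, the restriction--corestriction composition shows that the restriction $H^2(G_\Q,\mathbb{F}_\ell) \to H^2(G_{\Q(\zeta_\ell)}, \mathbb{F}_\ell)$ is injective, since $[\Q(\zeta_\ell):\Q]=\ell-1$ is invertible on the $\ell$-torsion group $H^2(G_\Q,\mathbb{F}_\ell)$. Over $\Q(\zeta_\ell)$, a choice of primitive $\ell$-th root of unity identifies $\mathbb{F}_\ell$ with $\mu_\ell$ as $G_{\Q(\zeta_\ell)}$-modules, and Brauer--Hasse--Noether realises $H^2(G_{\Q(\zeta_\ell)},\mu_\ell)\cong\mathrm{Br}(\Q(\zeta_\ell))[\ell]$ as a subgroup of $\bigoplus_w\mathrm{Br}(\Q(\zeta_\ell)_w)[\ell]$. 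Tracing restrictions, this yields that $\theta_{\chi,\chi'}$ vanishes globally if and only if $\theta_{\chi,\chi'}|_{G_{\Q_v}}$ vanishes at every place $v$ of $\Q$.

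Next I would compute the local classes. At $v=\infty$ they vanish because $H^2(G_\R,\mathbb{F}_\ell)=0$ for odd $\ell$, and at $v=\ell$ they vanish by Theorem \ref{tHeisenberg}. For a prime $p\neq\ell$, I would invoke Remark \ref{rSelfcup}: self-cup products $\theta_{\lambda,\lambda}$ are trivial, so the local class of $\theta_{\chi,\chi'}|_{G_{\Q_p}}$ depends only on the $\mathbb{F}_\ell$-span of $\chi|_{G_{\Q_p}},\chi'|_{G_{\Q_p}}$, and if that span has dimension at most one the class is (up to a scalar) a self-cup and hence trivial. If the local span is two-dimensional, which forces $\ell\mid p-1$, I would pick a basis consisting of the unique unramified character $\chi_{\mathrm{un}}$ (coming from $\zeta_\ell$) and a totally ramified character $\chi_{\mathrm{ram}}$ (coming from a uniformizer $p$), and apply Lemma \ref{lMichailov}: the extension $\Q_p(\chi_{\mathrm{un}})/\Q_p$ is unramified of degree $\ell$, so its norm group equals $\langle p^\ell\rangle\cdot\mathcal{O}_{\Q_p}^\ast$ and $p$ is not a norm, whence $\theta_{\chi_{\mathrm{un}},\chi_{\mathrm{ram}}}$ is locally non-trivial at $p$.

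Finally I would translate the dichotomy into the residue-field-degree statement. Because $\Q_p(\chi,\chi')/\Q_p$ is an $\mathbb{F}_\ell$-elementary abelian extension, its degree equals $\ell^{\dim\textup{span}}$, and this degree also equals $e_pf_p$, where $e_p,f_p$ denote the ramification index and residue field degree of $p$ in the abelian extension $\Q(\chi,\chi')/\Q$. Thus the local span is two-dimensional precisely when $e_pf_p=\ell^2$, and since each of $e_p,f_p$ lies in $\{1,\ell\}$ this forces $e_p=f_p=\ell$, i.e.\ $p$ is ramified with residue field degree $\ell\neq 1$. Combining with the previous paragraph, $\theta_{\chi,\chi'}$ is globally trivial if and only if no such $p\neq\ell$ occurs, which is the stated condition. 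The main obstacle I expect is the local non-triviality step, which reduces via Lemma \ref{lMichailov} to the elementary but essential fact that uniformizers are not norms from an unramified degree $\ell$ extension; the rest of the proof is the formal package of restriction--corestriction, Brauer--Hasse--Noether, and counting local dimensions.
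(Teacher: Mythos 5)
Your proposal is correct and follows essentially the same route as the paper's proof: reduce to $\Q(\zeta_\ell)$ by restriction--corestriction, invoke the local--global injectivity of $H^2$ coming from class field theory (you call it Brauer--Hasse--Noether; the paper just says "class field theory"), note triviality at $\infty$ and at $\ell$ via Theorem \ref{tHeisenberg}, and handle $p \neq \ell$ by the dimension of the local span of $\chi, \chi'$. The one place where you go beyond the paper's text is the forward direction: the paper dispatches local non-triviality for $p \neq \ell$ with $e_p = f_p = \ell$ by the single phrase "we reverse the above logic," whereas you make this explicit, choosing the basis $\chi_{\mathrm{un}}, \chi_{\mathrm{ram}}$ (valid since the 2-dimensional span forces $\ell \mid p-1$, so $\zeta_\ell \in \Q_p$) and applying Lemma \ref{lMichailov} together with the fact that a uniformizer is not a norm from the unramified degree-$\ell$ extension. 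This is a genuine, if small, gain in completeness: it supplies the norm-theoretic input that the paper's "reverse the logic" implicitly relies on. One point you pass over quickly, but which is needed for your claim that the local class "depends only on the $\mathbb{F}_\ell$-span," is that $\theta_{\chi,\chi'}$ modulo self-cups is an alternating form (Remark \ref{rSelfcup} makes $\theta_{\lambda,\lambda}$ trivial and hence $\theta_{\chi,\chi'}=-\theta_{\chi',\chi}$ in $H^2$), so a basis change multiplies the class by the determinant, a nonzero scalar, and therefore triviality depends only on the span. That observation is what legitimizes reducing to the specific basis $\chi_{\mathrm{un}}, \chi_{\mathrm{ram}}$.
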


\begin{proof}
We first prove the backward implication. There are natural maps
\[
H^2(G_\Q, \mathbb{F}_\ell) \xrightarrow{\text{res}} H^2(G_{\Q(\zeta_\ell)}, \mathbb{F}_\ell) \xrightarrow{\text{cores}} H^2(G_\Q, \mathbb{F}_\ell).
\]
The composition $\text{cores} \circ \text{res}$ is multiplication by $[\Q(\zeta_\ell) : \Q] = \ell - 1$. It follows that the map $\text{res}$ is injective. From class field theory, we get another injective map
\[
H^2(G_{\Q(\zeta_\ell)}, \mathbb{F}_\ell) \rightarrow \bigoplus_w H^2(G_{\Q(\zeta_\ell)_w}, \mathbb{F}_\ell),
\]
where $w$ runs over the places of $\Q(\zeta_\ell)$. Hence it suffices to check that $\theta_{\chi, \chi'}(\sigma, \tau)$ is trivial in $H^2(G_{\Q(\zeta_\ell)_w}, \mathbb{F}_\ell)$ for each place $w$.

Denote by $v$ the place of $\Q$ below $w$. If $v$ is unramified in $\Q(\chi, \chi')$ or if $v$ is the infinite place, it is clear that $\theta_{\chi, \chi'}(\sigma, \tau)$ is trivial in $H^2(G_{\Q(\zeta_\ell)_w}, \mathbb{F}_\ell)$. Now suppose that $v \neq \ell$ ramifies in $\Q(\chi, \chi')$. 

By assumption $v$ has residue field degree $1$ in $\Q(\chi ,\chi')$. If $\chi$ and $\chi'$ are both ramified at $v$, then the $2$-cocycle $\theta_{\chi, \chi'}(\sigma, \tau)$ is trivial in $H^2(G_{\Q(\zeta_\ell)_w}, \mathbb{F}_\ell)$ by Remark \ref{rSelfcup}, since it is of the shape $\theta_{\rho, \rho}(\sigma, \tau)$ locally at $v$. If instead $\chi$ is ramified at $v$, while $\chi'$ is not, the $2$-cocycle $\theta_{\chi, \chi'}(\sigma, \tau)$ is the zero map locally at $v$. 

It remains to deal with the case $v = \ell$. But the analysis in Theorem \ref{tHeisenberg} shows that $\theta_{\chi, \chi'}(\sigma, \tau)$ is always locally trivial at $\ell$. For the forward implication, we reverse the above logic. This completes the proof.
\end{proof}

Let $\chi, \chi': G_\Q \rightarrow \mathbb{F}_\ell$ be two linearly independent characters. We define
\[
\mu(\chi, \chi') = 
\left\{
\begin{array}{ll}
\ell^0 & \mbox{if } \Q(\chi, \chi') \text{ is unramified at } \ell \\
\ell^{(\ell - 1)(2\ell - 2)} & \mbox{if } \ell \text{ splits in } \Q(\chi) \text{ and ramifies in } \Q(\chi, \chi') \\
\ell^{\ell(2\ell - 2)} & \mbox{if } \ell \text{ is inert in } \Q(\chi) \text{ and ramifies in } \Q(\chi, \chi') \\
\ell^{\ell(2\ell - 2)} & \mbox{if } \ell \text{ ramifies in } \Q(\chi) \text{ and splits in } \Q(\chi, \chi') \\
\ell^{(\ell + 1)(2\ell - 2)} & \mbox{if } \ell \text{ ramifies in } \Q(\chi) \text{ and is inert in } \Q(\chi, \chi').
\end{array}
\right.
\]
Denote by $\widetilde{\Delta}(\chi)$ the product of the ramifying primes in $\Q(\chi)$ that are coprime to $\ell$ and let $\text{free}(d, a)$ be the largest squarefree integer dividing $d$ and coprime with $a$.

\begin{theorem}
\label{tMinimalDisc}
Let $\chi, \chi': G_\Q \rightarrow \mathbb{F}_\ell$ be two linearly independent characters. Suppose that $\theta_{\chi, \chi'}(\sigma, \tau)$ is trivial in $H^2(G_\Q, \mathbb{F}_\ell)$. Then there exists a minimal Heisenberg extension $M/\Q$ containing $\Q(\chi, \chi')$, which equals $\Q(\chi, \chi')(\rho)$ for some $\rho \in \textup{Heis}(\Q(\chi, \chi')/\Q)$. Furthermore, all Heisenberg extensions containing $\Q(\chi, \chi')$ are obtained by twisting $\rho$ by a character $\chi'': G_\Q \rightarrow \mathbb{F}_\ell$.

Now suppose that $\Q(\chi) \subsetneq L \subsetneq M$ and suppose that the Galois closure of $L$ is $M$.  Then
\[
\Delta_{L/\Q} = \widetilde{\Delta}(\chi)^{\ell (\ell - 1)} \textup{free}(\widetilde{\Delta}(\chi'), \widetilde{\Delta}(\chi))^{(\ell - 1)^2} \mu(\chi, \chi').
\]
\end{theorem}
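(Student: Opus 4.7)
My plan has three main steps: construct a minimal Heisenberg extension via twisting, read off the full set of Heisenberg extensions from the torsor structure, and then compute $\Delta_{L/\Q}$ prime by prime.

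For the first two steps, equation \eqref{eInfResCor} and Lemma \ref{lHeisenbergExists} together show that $\textup{Heis}(K/\Q)$ (with $K := \Q(\chi,\chi')$) is nonempty and is a torsor for the image of $\textup{Hom}(G_\Q, \F_\ell)$ in $\textup{Hom}(G_K, \F_\ell)$. Starting from any $\rho_0 \in \textup{Heis}(K/\Q)$, I would construct a global character $\chi'' \in \textup{Hom}(G_\Q, \F_\ell)$ whose local restrictions cancel the ramification of $\rho_0$ at every prime at which minimality fails: at each $p \neq \ell$ where $K$ is unramified but $\rho_0$ is locally ramified (necessarily $p \equiv 1 \bmod \ell$ for $\chi''$ to have any ramification at $p$), choose $\chi''|_{G_{\Q_p}}$ to be the negative of the ramified piece of $\rho_0|_{G_{\Q_p}}$, and similarly cancel at $\ell$ when $\ell$ has residue field degree one in $K$. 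Such a global $\chi''$ exists by a Grunwald--Wang type argument since $\ell$ is odd. The twist $\rho := \rho_0 + \chi''|_{G_K}$ then lies in $\textup{Heis}(K/\Q)$ and defines the minimal Heisenberg extension $M := K(\rho)$; by the torsor property, every Heisenberg extension containing $K$ is a further twist of $\rho$ by a character of $G_\Q$.

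The discriminant computation is prime by prime. At any tame prime $p \neq \ell$, the standard permutation-character formula gives $v_p(\Delta_{L/\Q}) = [G : H] - |I_p \backslash G/H|$, where $G = \textup{Heis}_\ell$, $H := \Gal(M/L)$, and $I_p$ is the inertia at $p$. The key structural observation is that $H$ is a non-central subgroup of order $\ell$ contained in the normal subgroup $H_\chi := \Gal(M/\Q(\chi))$ of order $\ell^2$, so every conjugate $gHg^{-1}$ also lies inside $H_\chi$, and moreover $N_{\textup{Heis}_\ell}(H) = H_\chi$. If $p \mid \widetilde{\Delta}(\chi)$, then $I_p \not\leq H_\chi$, hence $I_p \cap gHg^{-1} = \{1\}$ for every $g$, every $I_p$-orbit on $G/H$ has size $\ell$, and $v_p(\Delta_{L/\Q}) = \ell^2 - \ell = \ell(\ell - 1)$. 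If instead $p \mid \widetilde{\Delta}(\chi')$ but $p \nmid \widetilde{\Delta}(\chi)$, then $I_p$ is a non-central order-$\ell$ subgroup of $H_\chi$, hence $\textup{Heis}_\ell$-conjugate to $H$; using that $N_{\textup{Heis}_\ell}(H)$ has index $\ell$, exactly $\ell$ of the $\ell^2$ cosets $gH$ satisfy $gHg^{-1} = I_p$, and the remaining $\ell(\ell-1)$ cosets fall into $\ell - 1$ orbits of size $\ell$, yielding $v_p(\Delta_{L/\Q}) = \ell^2 - (\ell + (\ell - 1)) = (\ell - 1)^2$. This matches the factors $\widetilde{\Delta}(\chi)^{\ell(\ell-1)}\,\textup{free}(\widetilde{\Delta}(\chi'), \widetilde{\Delta}(\chi))^{(\ell-1)^2}$, with the use of $\textup{free}$ reflecting the absorption of common primes into the first factor.

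The main obstacle is the wild computation at $p = \ell$, which splits into the five cases tabulated in the definition of $\mu(\chi,\chi')$ according to the joint splitting behavior of $\ell$ in $\Q(\chi)$ and in $K$. I would localize: complete at $\ell$ and identify each local factor of $M \otimes_\Q \Q_\ell$ as either an unramified extension, a local bicyclic piece analyzed via Lemma \ref{lBicyclic}, or the unique local Heisenberg extension from Theorem \ref{tHeisenberg} (possibly after an unramified base change). Minimality of $M$ pins down which decomposition group occurs in each case, and the contribution of each local factor to $v_\ell(\Delta_{L/\Q})$ is then determined, using the discriminant $\ell^{\ell(\ell+1)(2\ell-2)}$ from Theorem \ref{tHeisenberg} as the building block. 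The hardest subcase is when $\ell$ ramifies in $\Q(\chi)$ but splits completely in $K/\Q(\chi)$: here the decomposition group at $\ell$ is a proper subgroup of $\textup{Heis}_\ell$, and one needs a careful additional argument using minimality to rule out extra ramification that would otherwise appear.
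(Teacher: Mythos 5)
Your treatment of the tame primes $p \neq \ell$ via the permutation-character formula $v_p(\Delta_{L/\Q}) = [G:H] - |I_p \backslash G/H|$ is correct and is a genuinely different route from the paper's, which instead works down the tower $\Q \subset \Q(\chi) \subset L$ and uses minimality to count directly how many of the $\ell$ places of $\Q(\chi)$ above $p$ ramify in $L$. Your group-theoretic facts all check out: $H = \Gal(M/L)$ is a non-central order-$\ell$ subgroup of $H_\chi = \Gal(M/\Q(\chi))$, $N_{\textup{Heis}_\ell}(H) = H_\chi$, the $\ell$ non-central order-$\ell$ subgroups of $H_\chi$ form a single $\textup{Heis}_\ell$-conjugacy class, and minimality forces $|I_p| = \ell$ with $I_p \cap \Gal(M/K) = \{1\}$. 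These give $\ell(\ell-1)$ when $I_p \not\leq H_\chi$ and $(\ell-1)^2$ when $I_p \leq H_\chi$ is non-central, matching the two tame factors, and the $\textup{free}$ is interpreted correctly. The existence-and-torsor step is essentially the paper's too, but where you invoke Grunwald--Wang the paper cites a general result on central embedding problems (Koymans--Pagano, Prop.\ 4.8); to make your version self-contained you still owe the justification that at a place $p$ unramified in $K$ the local restriction of $\rho_0$ descends to a character of $G_{\Q_p}$ (local inflation--restriction with $\Gal(K_w/\Q_p)$ cyclic is what makes this go).

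Where the proposal is genuinely incomplete is $p = \ell$, which you only outline, and your sense of where the difficulty lies is inverted. The subcase you flag as hardest, $\ell$ ramified in $\Q(\chi)$ with residue degree one in $K$, is in fact the easiest: minimality forces $M/K$, and hence $L/\Q(\chi)$, to be unramified above $\ell$, so $v_\ell(\Delta_{L/\Q}) = \ell \cdot v_\ell(\Delta_{\Q(\chi)/\Q}) = \ell(2\ell-2)$ with no Heisenberg input whatsoever. The substantive subcases are those where $M/\Q(\chi)$ is actually ramified above $\ell$: $\ell$ splitting or inert in $\Q(\chi)$ while ramifying in $K$, and $\ell$ ramifying in $\Q(\chi)$ while inert in $K$. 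There the paper needs Theorem \ref{tHeisenberg} (including the intermediate identity $\Delta_{M'/\Q_\ell(\chi_{\textup{un}})} = (\ell)^{2\ell-2}$ from its proof), Lemma \ref{lBicyclic} applied to the bicyclic $M/\Q(\chi)$, and Lemma \ref{lHIsoTest} to see that all but one of the intermediate fields are isomorphic to $L$; the resulting case-by-case accounting is exactly what produces the five exponents in $\mu(\chi,\chi')$. Your plan names the right tools, but the computation that actually yields those exponents is not carried out.
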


\begin{proof}
By Lemma \ref{lHeisenbergExists} it follows that there exists a Heisenberg extension $M$ of $\Q$ containing $\Q(\chi, \chi')$. It is then a general fact about central extensions that there exists a Heisenberg extension $M$ containing $\Q(\chi, \chi')$ that is unramified at every place $v$ that is unramified in $\Q(\chi, \chi')$, see \cite[Proposition 4.8]{Ko--Pa}. We claim that such an extension $M$ is minimal.

It remains to analyze the splitting behavior of $v = \ell$, where $v$ has residue field degree $1$ in $K := \Q(\chi, \chi')$. By the previous remark we may and will assume that $v$ ramifies in $K$. Let $w$ be a place of $\Q(\chi, \chi')$ above $v$. We are going to show that $M$ is unramified at $w$. Consider the commutative diagram
\[ 
\begin{tikzcd}[row sep=large, column sep = 1.75em]
\text{Hom}(G_K, \mathbb{F}_\ell)^{\Gal(K/\Q)} \arrow[swap]{d}{\text{res}} \arrow{rr}{\text{tr}} && H^2(\Gal(K/\Q), \mathbb{F}_\ell) \arrow{d}{\text{res}} \\%
\text{Hom}(G_{K_w}, \mathbb{F}_\ell)^{\Gal(K_w/\Q_v)} \arrow{r}{\text{tr}} & H^2(\Gal(K_w/\Q_v), \mathbb{F}_\ell) \arrow{r}{\cong} & H^2(\Gal(K/D_{w/v}), \mathbb{F}_\ell),
\end{tikzcd}
\]
where $D_{w/v}$ is the decomposition group. To check that the diagram is commutative, we remark that equation (\ref{eExt}) shows that the transgression map is explicitly given by sending the character $\rho \in \text{Hom}(G_K, \mathbb{F}_\ell)^{\Gal(K/\Q)}$ to the $2$-cocycle
\[
(\sigma_1, \sigma_2) \mapsto \rho\left(\widetilde{\sigma_1 \sigma_2}, \widetilde{\sigma_1}^{-1} \widetilde{\sigma_2}^{-1}\right),
\]
where we fix lifts $\widetilde{\sigma} \in G_\Q$ for every $\sigma \in \Gal(K/\Q)$. As we have seen in the proof of Lemma \ref{lHeisenbergExists}, the class of $\theta_{\chi, \chi'}(\sigma, \tau)$ is trivial in $H^2(\Gal(K_w/\Q_v), \mathbb{F}_\ell)$. Writing $\rho$ for a character in $\text{Hom}(G_K, \mathbb{F}_\ell)^{\Gal(K/\Q)}$ defining $M$, it follows that $\rho$ becomes a character from $\Q_v$ when restricted to $K_w$. This implies the claim, since $v$ ramifies in $K$.

Having established the claim, we have shown the existence of a minimal Heisenberg extension $M$ containing $K$. From the inflation--restriction sequence (\ref{eInfRes}) it is immediate that any other Heisenberg extension of $K$ is obtained by twisting $\rho$ by a character $\chi'': G_\Q \rightarrow \mathbb{F}_\ell$.

We will now further analyze the ramification properties of $M$. Take a place $v \neq \ell$ that ramifies in $K$. Let $w$ be a place of $K$ above $v$. We claim that $w$ is unramified in $M$. If not, we see that any inertia subgroup $I_v$ of $v$ must be of size $\ell^2$. But $v$ is tamely ramified and therefore $I_v$ is a cyclic group. This is plainly impossible, since every element of the Heisenberg group has order $\ell$.

We are now ready to compute the discriminant of $L$. Take a place $v \neq \ell$ that ramifies in $\Q(\chi)$ and recall the formula
\[
\Delta_{L/\Q} = {\rm N}_{\Q(\chi)/\Q}(\Delta_{L/\Q(\chi)}) \Delta_{\Q(\chi)/\Q}^\ell.
\]
From the above we see that the $v$-adic valuation of ${\rm N}_{\Q(\chi)/\Q}(\Delta_{L/\Q(\chi)})$ is $0$. Furthermore, since $v$ is tamely ramified, we have that the $v$-adic valuation of $\Delta_{\Q(\chi)/\Q}^\ell$ is $\ell(\ell - 1)$. Next we compute the contribution from those $v \neq \ell$ that are unramified in $\Q(\chi)$ but ramify in $K$. In this case the formula simplifies to
\[
\Delta_{L/\Q} = {\rm N}_{\Q(\chi)/\Q}(\Delta_{L/\Q(\chi)}).
\]
Furthermore, we know by Lemma \ref{lHeisenbergExists} that $v$ splits completely in $\Q(\chi)$. Suppose that $w_1, \dots, w_\ell$ are the places above $v$. Because $w_1, \dots, w_\ell$ ramify in $K$ but do not ramify further in $M/K$, it follows that precisely $\ell - 1$ of them must ramify in $L$ so that the $v$-adic valuation of $\Delta_{L/\Q}$ is $(\ell - 1)^2$.

It remains to deal with the case $v = \ell$. We distinguish four cases
\begin{enumerate}
\item[(i)] suppose that $\ell$ ramifies in $\Q(\chi)$ and has residue field degree $1$ in $K$. In this case any prime above $\ell$ is unramified in $L$. Hence
\[
v_\ell(\Delta_{L/\Q}) = \ell(2\ell - 2);
\]
\item[(ii)] suppose that $\ell$ splits in $\Q(\chi)$ but ramifies in $K$. Then
\[
v_\ell(\Delta_{L/\Q}) = {\rm N}_{\Q(\chi)/\Q}(\Delta_{L/\Q(\chi)}).
\]
Note that
\[
v_\ell(\Delta_{M/\Q}) = \ell^2(2\ell - 2)
\]
and hence $w(\Delta_{M/\Q(\chi)}) = \ell(2\ell - 2)$ for any place $w$ of $\Q(\chi)$ above $v$. Suppose that $w$ ramifies in $L$. Consider the bicyclic extension $M/\Q(\chi)$. There are $\ell + 1$ intermediate fields $K, L_1, \dots, L_\ell$, where the $L_i$ are all isomorphic by Lemma \ref{lHIsoTest}. Furthermore, $w$ ramifies in $K$ and precisely $\ell - 1$ of the $L_i$. Therefore Lemma \ref{lBicyclic} implies that
\[
(\ell - 1) \cdot w(\Delta_{L/\Q(\chi)}) + w(\Delta_{K/\Q(\chi)}) = w(\Delta_{M/\Q(\chi)}) = \ell(2\ell - 2).
\]
We conclude that
\[
w(\Delta_{L/\Q}) = 2\ell - 2, \quad v_\ell(\Delta_{L/\Q}) = (\ell - 1)(2\ell - 2);
\]
\item[(iii)] suppose that $\ell$ ramifies in $\Q(\chi)$ and has residue field degree $\ell$ in $K$. Denote by $w$ the unique place of $\Q(\chi)$ above $\ell$. Arguing as above we get
\[
\ell \cdot w(\Delta_{L/\Q(\chi)}) = w(\Delta_{M/\Q(\chi)}) = \ell(2\ell - 2),
\]
where the last equality follows from Theorem \ref{tHeisenberg}. Hence we have
\[
v_\ell(\Delta_{L/\Q}) = (2\ell - 2) + \ell(2\ell - 2) = (\ell + 1)(2\ell - 2);
\]
\item[(iv)] suppose that $\ell$ is inert in $\Q(\chi)$ but ramifies in $K$. Inspecting the proof of Theorem \ref{tHeisenberg}, see equation (\ref{eLBad}), we conclude that
\[
v_\ell(\Delta_{L/\Q}) = \ell(2\ell - 2).
\]
\end{enumerate}
This completes the proof.
\end{proof}

\subsection{Counting Heisenberg extensions by discriminant}
Let $\chi, \chi': G_\Q \rightarrow \mathbb{F}_\ell$ be two linearly independent characters. Define for an integer $d > 0$
\[
\mu(\chi, \chi', d) = 
\left\{
\begin{array}{ll}
\ell^{ {\ell} (2\ell - 2)} & \mbox{if } \Q(\chi, \chi') \text{ is unramified at } \ell \text{ and } \ell \mid d\\
\mu(\chi, \chi') & \mbox{otherwise.} \\
\end{array}
\right.
\]
We also put
\begin{align*}
D(d, \chi, \chi', \ell) &:= \widetilde{\Delta}(\chi)^{\ell (\ell - 1)} \textup{free}(\widetilde{\Delta}(\chi'), \widetilde{\Delta}(\chi))^{(\ell - 1)^2} \mu(\chi, \chi', d) \\
S_1(X, \ell) &:= \{d \in \Z_{>0} : d \leq X, \ d \text{ squarefree}, \ p \mid d \Rightarrow p \equiv 0, 1 \bmod \ell\} \\
S_2(X, \chi, \chi', \ell) &:= \{d \in S_1(X, \ell) : \gcd(d, \widetilde{\Delta}(\chi) \widetilde{\Delta}(\chi')) = 1\} \\
S_3(X, \chi, \chi', \ell) &:= \sum_{\substack{d \in S_2(X, \chi, \chi', \ell) \\ \text{free}(d, \ell)^{\ell(\ell - 1)} \leq \frac{X}{D(d, \chi, \chi', \ell)}}} (\ell - 1)^{\omega^\ast_\ell(d)},
\end{align*}
where $\omega^\ast_\ell$ is the number of prime divisors (counted without multiplicity) not equal to $\ell$. Recall that $N(\text{Heis}_\ell, X)$ denotes the number of degree $\ell^2$ extensions $L$ of $\Q$, up to isomorphism, with $\Gal(N(L)/\Q) \cong \text{Heis}_\ell$ and absolute discriminant bounded by $X$.

\begin{theorem}
\label{tHDisc}
Let $\ell$ be an odd prime number. Then
\begin{align}
\label{eAlgHeiSum}
N(\textup{Heis}_\ell, X) = (\ell - 1)^{-2} \hspace{-0.5cm} \sum_{\substack{\chi, \chi': G_\Q \rightarrow \mathbb{F}_\ell \\ \chi, \chi' \textup{ lin. indep.}}} \mathbbm{1}_{\theta_{\chi, \chi'}(\sigma, \tau) \textup{ trivial}} \cdot \ell^{\omega(\widetilde{\Delta}(\chi) \widetilde{\Delta}(\chi')) - 3} \cdot S_3(X, \chi, \chi', \ell).
\end{align}
\end{theorem}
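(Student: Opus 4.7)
By Lemma \ref{lHIsoTest}, each degree $\ell^2$ extension $L/\Q$ with $\Gal(N(L)/\Q) \cong \textup{Heis}_\ell$ (up to isomorphism) is in bijection with a pair $(M, F)$ where $M = N(L)$ is a Heisenberg extension and $F \subset L$ is the unique degree $\ell$ Galois subfield. The plan is to parametrize such pairs via triples of characters $(\chi, \chi', \chi'')$ from $G_\Q$ to $\F_\ell$: $\chi, \chi'$ linearly independent with $\Q(\chi) = F$ and $\Q(\chi, \chi') = K$ the maximal abelian subextension of $M$, with $\theta_{\chi, \chi'}$ trivial in $H^2(G_\Q, \F_\ell)$ (equivalently, by Lemma \ref{lHeisenbergExists}, a minimal $\rho_0 \in \textup{Heis}(K/\Q)$ as in Theorem \ref{tMinimalDisc} exists), and $\chi''$ a twist with $M = K(\rho_0 + \chi''|_{G_K})$. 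This already explains the indicator $\mathbbm{1}_{\theta_{\chi, \chi'} \textup{ trivial}}$ in the formula.

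For fixed $(M, F)$ there are $(\ell-1)\cdot \ell(\ell-1) = \ell(\ell-1)^2$ admissible pairs $(\chi, \chi')$. For the twist, two characters $\chi''_1, \chi''_2$ give the same $M$ iff they differ by an element of $\textup{Hom}(\Gal(K/\Q), \F_\ell) = \langle \chi, \chi' \rangle$, so I plan to exploit the shift $\chi'' \mapsto \chi'' + a\chi + b\chi'$ to restrict attention to $\chi''$ unramified at every prime dividing $\widetilde{\Delta}(\chi)\widetilde{\Delta}(\chi')$ --- this is precisely the coprimality condition defining $S_2(X, \chi, \chi', \ell)$. The central step is a rank--image analysis of the local restriction map $\phi : (a, b) \mapsto (a\chi|_{I_p} + b\chi'|_{I_p})_p$ over the primes $p \mid \widetilde{\Delta}(\chi)\widetilde{\Delta}(\chi')$: if $\textup{rank}(\phi) = r$, then each Heisenberg coset whose shift vector is hit by $\phi$ contains exactly $\ell^{2-r}$ coprime representatives, while only a fraction $\ell^{r - \omega(\widetilde{\Delta}(\chi)\widetilde{\Delta}(\chi'))}$ of cosets are hit; the product $\ell^{2-\omega(\widetilde{\Delta}(\chi)\widetilde{\Delta}(\chi'))}$ is \emph{independent} of $r$. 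Combining with the $\ell(\ell-1)^2$ from $(\chi, \chi')$ gives total multiplicity $\ell^{3-\omega(\widetilde{\Delta}(\chi)\widetilde{\Delta}(\chi'))}(\ell-1)^2$ of each $L$ in the triple sum, inverting to the prefactor $(\ell-1)^{-2} \ell^{\omega(\widetilde{\Delta}(\chi)\widetilde{\Delta}(\chi')) - 3}$.

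Finally, for each admissible coprime triple I compute $\Delta_{L/\Q}$ by extending the analysis of Theorem \ref{tMinimalDisc} to the twisted setting. At primes of $\widetilde{\Delta}(\chi'')$ disjoint from $\ell$ and from $\widetilde{\Delta}(\chi)\widetilde{\Delta}(\chi')$, the twist ramifies $L$ by exactly the mechanism of the $\widetilde{\Delta}(\chi')$ contribution in Theorem \ref{tMinimalDisc}, producing the factor $\textup{free}(d, \ell)^{\ell(\ell-1)}$ (where $d$ incorporates the ramification of $\chi''$ at $\ell$). At $\ell$, the local argument underlying $\mu(\chi, \chi')$ extends to $\mu(\chi, \chi', d)$, the new case handling the situation where $\chi''$ ramifies at $\ell$ in an otherwise $\ell$-unramified $\Q(\chi, \chi')$. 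By local class field theory the coprime characters $\chi''$ with prescribed discriminant pattern $d$ number exactly $(\ell-1)^{\omega_\ell^\ast(d)}$ (one factor $\ell - 1$ per prime $p \mid \textup{free}(d, \ell)$ corresponding to the nontrivial characters of $(\Z/p\Z)^\ast \to \F_\ell$), which assembles into $S_3(X, \chi, \chi', \ell)$ as the inner sum. I expect the hardest part to be precisely this rank--image cancellation: the surprising balance between the number of coprime representatives per hit coset and the fraction of cosets that are hit --- so that missed $L$'s are exactly compensated by over-counted captured ones, yielding a uniform power of $\ell$ regardless of the overlap structure between $\widetilde{\Delta}(\chi)$ and $\widetilde{\Delta}(\chi')$ --- is the key technical point that makes the identity clean.
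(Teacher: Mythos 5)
Your proposal takes a genuinely different route from the paper's proof, and while the skeleton of the idea is sound, there is a real gap in the central step that you flag as the key technical point.

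The paper's final step avoids the rank/fraction analysis entirely: it writes each $\chi''$ \emph{uniquely} as $\chi'' = \chi''_1 + \chi''_2$, where $\chi''_1$ is ramified only at primes coprime to $\widetilde{\Delta}(\chi)\widetilde{\Delta}(\chi')$ (this is the "$d$" in $S_3$) and $\chi''_2$ is supported only on primes dividing $\widetilde{\Delta}(\chi)\widetilde{\Delta}(\chi')$. The $\ell^{\omega(\widetilde{\Delta}(\chi)\widetilde{\Delta}(\chi'))}$ choices of $\chi''_2$ yield $\ell^{\omega(\widetilde{\Delta}(\chi)\widetilde{\Delta}(\chi'))-2}$ distinct fields (because precisely $\langle\chi,\chi'\rangle$ is killed on restriction to $G_{\Q(\chi,\chi')}$), all with the same discriminant, so the counting is a direct product: no averaging needed, no rank $r$ ever appears.

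Your alternative — parametrize by \emph{coprime} $\chi''$ only and compensate the unhit cosets by the overcounted ones — is appealing, but as written it is not a pointwise statement. For a fixed $L$, the multiplicity in your triple sum is $\ell(\ell-1)^2\cdot\ell^{2-r}\cdot\mathbbm{1}_{\mathrm{hit}}$, not $\ell^{3-\omega(\widetilde{\Delta}(\chi)\widetilde{\Delta}(\chi'))}(\ell-1)^2$; those two agree only on average, and only after you establish that, \emph{conditional on each fixed discriminant}, the cosets are equidistributed over $\F_\ell^{\omega}/\mathrm{Im}(\phi)$ so that the hit fraction is exactly $\ell^{r-\omega}$. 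That equidistribution is true (it comes down to the fact that a character $G_\Q\to\F_\ell$ is uniquely determined by its ramification data at each prime, since $\Q$ has trivial narrow class group, and the discriminant depends only on the coprime-and-$\ell$ data while the hit property depends only on the $\widetilde{\Delta}$ data), but you assert rather than prove it, and it is precisely the content that makes the "miracle cancellation" non-circular. A second subtlety you should address: the map $\phi$ as you define it forgets the prime $\ell$, where $\chi$ or $\chi'$ may ramify. The full local restriction including $\ell$ always has rank $2$ (since $\chi,\chi'$ are determined by their ramification and are linearly independent), which is what makes the coset count well-posed even when your $\phi$ has rank $<2$; dropping $\ell$ from $\phi$ makes the $r<2$ cases delicate precisely because $\langle\chi,\chi'\rangle$ then mixes the $\ell$-part and the $\widetilde{\Delta}$-part of the ramification vector. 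The paper's unique-decomposition argument sidesteps both of these issues, which is why it is the cleaner route; if you want to keep your coprime-restriction framing, you need to spell out the equidistribution per discriminant and keep track of the $\ell$-component explicitly.
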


\begin{proof}
We recall that $\theta_{\chi, \chi'}(\sigma, \tau)$ and $\theta_{\chi, \chi' + a\chi}(\sigma, \tau)$ give the same class in $H^2(\mathbb{F}_\ell^2, \mathbb{F}_\ell)$ for all $a \in \mathbb{F}_\ell$ by Remark \ref{rSelfcup}.

First, we fix $\chi$ and compute the contribution from those degree $\ell^2$ Heisenberg extensions $L$ containing $\Q(\chi)$. Since $\chi$ and $a \chi$ both have fixed field $\Q(\chi)$ for any $a \in \mathbb{F}_\ell^\ast$, we are overcounting by a factor $\ell - 1$. Next, let us further restrict to those $L$ such that the normal closure of $L$ contains $\Q(\chi, \chi')$ with $\chi'$ linearly independent from $\chi$. This certainly implies that $\theta_{\chi, \chi'}(\sigma, \tau)$ is trivial. 

Hence further fix a $\chi'$ linearly independent from $\chi$ with $\theta_{\chi, \chi'}(\sigma, \tau)$ trivial. Note that there are in fact $\ell(\ell - 1)$ choices of $\chi'$ that all give the same bicyclic extension $\Q(\chi, \chi')$, namely $a\chi' + b\chi$ with $a \in \mathbb{F}_\ell^\ast$ and $b \in \mathbb{F}_\ell$. Hence we are overcounting by another factor $\ell(\ell - 1)$.

Now we compute the contribution from the fields $L$ containing $\Q(\chi)$ such that the normal closure of $L$ contains $\Q(\chi, \chi')$. Fix a minimal extension $M$ containing $\Q(\chi, \chi')$. Then any field $L'$ satisfying $\Q(\chi) \subsetneq L' \subsetneq M$ has discriminant
\[
\widetilde{\Delta}(\chi)^{\ell (\ell - 1)} \textup{free}(\widetilde{\Delta}(\chi'), \widetilde{\Delta}(\chi))^{(\ell - 1)^2} \mu(\chi, \chi')
\]
by Theorem \ref{tMinimalDisc}. Let $\rho \in \text{Heis}(\Q(\chi, \chi')/\Q)$ be a character with fixed field $M$. Twisting $\rho$ by characters $\chi'': G_\Q \rightarrow \mathbb{F}_\ell$, we get all degree $\ell^3$ Heisenberg extensions containing $\Q(\chi, \chi')$. However, we get every extension $\ell^2$ times, since the characters $\chi$ and $\chi'$ are trivial when restricted to $G_{\Q(\chi, \chi')}$.

Suppose that we twist $\rho$ by a character $\chi'': G_\Q \rightarrow \mathbb{F}_\ell$ that is ramified precisely at the primes dividing $d$. From class field theory we immediately get that $d \in S_1(\infty, \ell)$. Furthermore for such an integer $d$, there are precisely $(\ell - 1)^{\omega(d)}$ characters that are ramified at exactly those primes dividing $d$. We claim that the discriminant of any field $L'$ such that $\Q(\chi) \subsetneq L' \subsetneq \Q(\chi, \chi')(\rho + \chi'')$ equals
\[
\widetilde{\Delta}(\chi)^{\ell (\ell - 1)} \textup{free}(\widetilde{\Delta}(\chi'), \widetilde{\Delta}(\chi))^{(\ell - 1)^2} \text{free}(d, \ell \widetilde{\Delta}(\chi) \widetilde{\Delta}(\chi'))^{\ell (\ell - 1)} \mu(\chi, \chi', d).
\]
The factor $\text{free}(d, \ell \widetilde{\Delta}(\chi) \widetilde{\Delta}(\chi'))^{\ell(\ell - 1)}$ is easily computed. Let us now focus on the factor $\mu(\chi, \chi', d)$. If there is precisely one place above $\ell$ in $\Q(\chi, \chi')$, twisting does not change the discriminant locally at $\ell$ by Theorem \ref{tHeisenberg}. Indeed, the two twists have the same normal closure (since there is only one Heisenberg field locally at $\ell$) and share the same cyclic subfield, so we can apply Lemma \ref{lHIsoTest}. Similarly, if $\ell$ ramifies in $\Q(\chi)$, twisting does not change the discriminant locally at $\ell$. If $\ell$ splits in $\Q(\chi)$ and ramifies in $\Q(\chi, \chi')$, then
\[
L \otimes \Q_\ell \cong \Q_\ell(\chi') \oplus \dots \oplus \Q_\ell(\chi') \oplus \Q_\ell^{\ell}
\]
or
\[
L \otimes \Q_\ell \cong \Q_\ell(\chi_{\text{un}}) \oplus \Q_\ell(\chi_{\text{un}} + \chi') \oplus \dots \oplus \Q_\ell(\chi_{\text{un}} + (\ell - 1)\chi'),
\]
where $\chi_{\text{un}}$ is an unramified degree $\ell$ character of $G_{\Q_\ell}$. Since $\chi'$ is a ramified character, we see once more that twisting does not change the discriminant locally at $\ell$. A similar analysis works if $\ell$ is unramified in $\Q(\chi, \chi')$.

Having established the claim, we are now ready to complete the proof. There are
\[
\ell^{\omega(\widetilde{\Delta}(\chi) \widetilde{\Delta}(\chi'))} 
\]
characters only ramified at the places dividing $\widetilde{\Delta}(\chi) \widetilde{\Delta}(\chi')$. Twisting with such characters clearly does not change the discriminant. Furthermore, they give
\[
\ell^{\omega(\widetilde{\Delta}(\chi) \widetilde{\Delta}(\chi')) - 2} 
\]
different fields, because the characters $\chi$ and $\chi'$ are trivial characters of $G_{\Q(\chi, \chi')}$. This gives the theorem.
\end{proof}

\section{Analytic prerequisites}   
\subsection{The general question} 
\label{Thegeneral}
From now on we shall mostly focus on the case $\ell = 3$. The aim of this section is to transform equation (\ref{eAlgHeiSum}) in the character sum ${\rm Heis} (X, 3)$ (see Proposition \ref{pAnaHeisSum} below). The definition of ${\rm Heis} (X, 3)$ is given in Definition \ref{definitionHeis} below. Since this character sum is rather delicate, we take some time to present its definition.

By convention  we reserve the letters $p$ and  $\ell$ for usual rational primes. The letter $r$ will also designate a prime particularly in Definition \ref{definition1} and in the formulas deduced from it. When $\ell \geq 3$ is a prime, we introduce the following sets of integers 
$$
 \PP_{\ell}:= \{ p : p\equiv  0,\, 1 \bmod \ell\}, $$
$$
\PP^*_{\ell} := \{ p : p\equiv 1 \bmod \ell\},
$$
$$
\N_{\ell}  :=\{ n : n\geq 1, \, n \text{ squarefree},  \, p\mid n \Rightarrow p \in \PP_{ \ell} \}, 
$$
and 
$$
\N_{\ell} ^*:=\{ n : n\geq 1, \,   n \text{ squarefree},\, p\mid n \Rightarrow p \in \PP^*_{ \ell} \}. 
$$
For $d\geq 1$, we denote by $\omega_{\ell}^* (d)$ the number of distinct prime divisors of $d$ belonging to $\PP_{\ell}^*$ and, as usual, $\omega (d) $ is the total number of distinct prime divisors of $d$.

\subsection{Standard primes, standard decomposition and characters} 
Let 
$$
j = \frac {-1 + i \sqrt 3}{2},
$$
be a cubic root of unity. For $z \in \Z [j]$, let ${\rm N}(z) =z\cdot \overline z$ be the norm of $z$. Every $p\in \mathbb P_{3}^*$  can be uniquely written as  
\begin{equation}
\label{standarddecomposition}
p=\pi \, \overline \pi 
\end{equation}
where 
\begin{equation*}
\begin{cases}
\pi \text{ and } \overline \pi \text{ belong to } \Z [j],\\
\pi \text{ is } primary \text{ (which means  } \pi \equiv 2 \bmod 3),  \\
{\rm Im } \,  \pi  >0.  
\end{cases}
\end{equation*}
This decomposition is named  the  {\it standard decomposition of $p$,}  and $\pi$ is a  {\it standard prime}.
For $p \in \mathbb P_{3}^*$, there are two Dirichlet characters modulo $p$ with order $3$. One of these
is 
\begin{equation}
\label{conventiononcharacter}
\chi_p (n) := \Bigl( \frac n\pi\Bigr)_3,
\end{equation}
which is defined without ambiguity as soon as $\pi$ is given by the standard decomposition \eqref{standarddecomposition}. Recall that the cubic character $\bigl( \frac \alpha  \pi\bigr)_3$ is defined, for $\alpha \in \Z [j]$ not divisible by $\pi$, by the formula 
$$
\Bigl( \frac \alpha \pi\Bigr)_3:= j^m,
$$
where $0\leq m\leq 2$ is the unique integer such that $\alpha^{\frac {p-1}3} \equiv j^m \bmod \pi$ (see \cite[Chap.9\,\S 3]{Ir-Ro}, for instance).

Modulo $9$, there are also two Dirichlet characters with order $3$. One of these is the character $\chi_3$ defined by its value
$$
\chi_3 (2) = j,
$$
which also defines $\chi_3$ without ambiguity. In conclusion, for every $p\in \PP_{3}$ we have fixed a Dirichlet character $\chi_p$ of order $3$. 

Let $f: \mathbb P_{3 }\longrightarrow \mathbb F_3$ be a function. By definition, the {\it support of $f$} is the set
$$
{\rm supp}\, f := \{ p \in \PP_3 : f(p) \not= 0\},
$$  
and ${\rm supp}_3 \, f$  is the support of the restriction of $f$ to $\PP_{3}^*$. We introduce the sets  of functions  
\begin{equation*}
V: = \{ f : \PP_3\longrightarrow \F_3, {\rm supp}\, f \text{ is finite} \}, 
\end{equation*}  
and
\begin{equation}\label{defV*}
V^* := \{ f : \PP_3\longrightarrow \F_3, {\rm supp}\, f \text{ is finite and } f(3)=0\}.
\end{equation}  
The sets $V$ and $V^*$ naturally have a structure of  $\F_3$--vector space with infinite dimension.

Given an $f$ in $V$, we define the Dirichlet character $\chi (f) $ over $\Z$ by the formula
\begin{equation}
\label{defchi(f)}
\chi (f) := \prod_{p\in \mathbb P_{3}} \chi_p ^{f(p)}.
\end{equation}
This has a meaning since this is a finite product and since all $\chi_p$ have order $3$. To evaluate $\chi (f)$ at some number $m \in \Z$, we naturally have
\begin{equation}
\label{convention}
\chi (f) (m) =\prod_{p \in \mathbb P_{3} }\bigl[\, \chi_p (m)\,\bigr]^{f(p)}
\end{equation}
with the convention that $z^0 =1$ for any $z \in \C$. In particular, we have
\begin{equation}
\label{chi(f)=0or}
\chi(f)(p)=
\begin{cases}
0 &\text{ if } p\in {\rm supp}\, f,\\
1,\, j\text{ or } j^2&\text{ if } p\notin {\rm supp}\, f.
\end{cases}
\end{equation}
To any $f\in  V$ we associate an integer $\Delta (f) \in \N_{3}^* $ defined by
$$
\Delta (f) := \prod_{ p \in \,{\rm supp}_3 f}  \   p.
$$
If $f(3) = 0$, then $\Delta (f)$ is the conductor of the Dirichlet character $\chi (f)$. On the other hand, if $f(3) \not= 0$, the conductor of $\chi (f)$ is equal to $9 \cdot \Delta (f)$. For $\Delta \in \N_3^*$, we will meet the following sets of functions, with cardinalities $3\cdot 2^{\omega (\Delta ) }$ and $2^{\omega (\Delta)}$
\begin{equation}
\label{defVV*}
V(\Delta) := \{ f \in V : \Delta (f) = \Delta\} \text{ and } V^* (\Delta):= \{ f \in V^*: \Delta (f) =\Delta\}.
\end{equation}
Finally, we introduce the function ${\mathbbm 1} (f, f')$ which can be interpreted as a characteristic function  since it takes  only values $0$ and  $1$ (see Lemma \ref{1leq1} below).   
 
\begin{definition}
\label{definition1}   
For $f, f'\in V$ let ${\mathbbm 1}(f, f')$ be the number defined by
$$
{\mathbbm 1} (f, f') := 3^{-\vert \, {\rm supp}_3\, f \, \cup\, {\rm supp}_3 \, f'\vert} \ \prod_{r \mid \Delta (f) \, \Delta (f')}
\Bigl(\,  \sum_{(z,z')\in \F_3^2\atop  f(r) z+f'(r) z'=0} \,
\bigl(\chi (zf+z'f')\bigr) (r)
\Bigr)
$$
\end{definition}

\noindent It follows from Lemma \ref{lHeisenbergExists} that 
\begin{align}
\label{eExpandIndicator}
{\mathbbm 1} (f, f') = \mathbbm{1}_{\theta_{\chi, \chi'}(\sigma, \tau) \text{ trivial}}.
\end{align}
In particular the following lemma is now obvious.

\begin{lemma} 
\label{1leq1}
For every $f$ and $f'$ in $V$, one has the equality 
$$
{\mathbbm 1}(f, f') \in \{0, 1\}.
$$
\end{lemma}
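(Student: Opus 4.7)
The plan is to observe that the preceding paragraph already contains the substantive content: equation (\ref{eExpandIndicator}) identifies $\mathbbm{1}(f,f')$ with the indicator function $\mathbbm{1}_{\theta_{\chi,\chi'}(\sigma,\tau) \text{ trivial}}$, and an indicator takes values only in $\{0,1\}$. So in principle the lemma is a one-line corollary. However, I would prefer to give a self-contained computational verification that the right-hand side of Definition \ref{definition1} takes only the values $0$ and $1$, since this does not require the Galois-theoretic interpretation and exhibits the role of the normalization factor $3^{-|\,{\rm supp}_3 f \cup {\rm supp}_3 f'|}$.

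The key step is to analyze each local factor separately. Fix a prime $r \mid \Delta(f)\Delta(f')$; then by definition $r \in {\rm supp}_3 f \cup {\rm supp}_3 f'$, so $(f(r), f'(r)) \neq (0,0)$ in $\F_3^2$. Hence the set $\Lambda_r := \{(z,z') \in \F_3^2 : f(r) z + f'(r) z' = 0\}$ is a one-dimensional $\F_3$-subspace of $\F_3^2$ consisting of exactly three elements. For each $(z,z') \in \Lambda_r$ the function $zf + z'f'$ vanishes at $r$, so by \eqref{chi(f)=0or} the character $\chi(zf+z'f')$ is unramified at $r$ and $\chi(zf+z'f')(r)$ is a cube root of unity. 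Moreover the map $\Lambda_r \to \mu_3$, $(z,z') \mapsto \chi(zf+z'f')(r)$, is a group homomorphism by \eqref{convention}.

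By character orthogonality on a cyclic group of order three, the sum over $\Lambda_r$ of this homomorphism is either $3$ (if the homomorphism is trivial) or $1 + j + j^2 = 0$ (if it is surjective onto $\mu_3$). Thus each local factor lies in $\{0,3\}$, and the product over $r \mid \Delta(f)\Delta(f')$ equals either $0$ or $3^{\omega(\Delta(f)\Delta(f'))}$. Since $\Delta(f) \Delta(f')$ is (up to squares) the radical supported on ${\rm supp}_3 f \cup {\rm supp}_3 f'$, we have $\omega(\Delta(f)\Delta(f')) = |{\rm supp}_3 f \cup {\rm supp}_3 f'|$, so the normalization $3^{-|{\rm supp}_3 f \cup {\rm supp}_3 f'|}$ in Definition \ref{definition1} cancels this factor exactly, leaving $\mathbbm{1}(f,f') \in \{0,1\}$. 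There is no real obstacle in this argument: the only point that requires a moment of care is checking that every prime of $\Delta(f)\Delta(f')$ actually contributes a genuine one-dimensional $\Lambda_r$ (so that each local sum runs over exactly three elements and the cancellation of powers of $3$ is exact), which is immediate from the definitions.
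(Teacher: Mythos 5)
Your proof is correct, and it is worth distinguishing its two halves. The first paragraph is exactly what the paper does: the authors prove Lemma~\ref{1leq1} by citing equation~\eqref{eExpandIndicator}, which rewrites $\mathbbm{1}(f,f')$ as the indicator $\mathbbm{1}_{\theta_{\chi,\chi'}(\sigma,\tau)\ \text{trivial}}$, whence the claim is immediate.

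Your self-contained computation is a genuinely different and more elementary route, and it is worth spelling out why it works. For each $r \mid \Delta(f)\Delta(f')$ you correctly note that $(f(r),f'(r))\neq(0,0)$, so $\Lambda_r = \{(z,z'): f(r)z+f'(r)z'=0\}$ is a line in $\F_3^2$; since $(zf+z'f')(r)=0$ on $\Lambda_r$, the value $\chi(zf+z'f')(r)$ is a genuine cube root of unity by~\eqref{chi(f)=0or}, and multiplicativity of $\chi_p(r)^{(\cdot)}$ in the $\F_3$-exponent makes $(z,z')\mapsto\chi(zf+z'f')(r)$ a homomorphism $\Lambda_r\to\mu_3$. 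Orthogonality on a group of order three then forces each local factor into $\{0,3\}$, and since the number of distinct primes dividing $\Delta(f)\Delta(f')$ is exactly $|{\rm supp}_3 f\cup{\rm supp}_3 f'|$, the prefactor $3^{-|{\rm supp}_3 f\cup{\rm supp}_3 f'|}$ normalizes the product to lie in $\{0,1\}$. What the paper's route buys is that~\eqref{eExpandIndicator} is needed later anyway (it is what makes the character sum in Theorem~\ref{tHDisc} into a genuine field count), so the lemma is a free by-product; what your route buys is independence from the Galois-theoretic machinery of Lemma~\ref{lHeisenbergExists} and a transparent view of the role of the $3^{-|\cdot|}$ normalization. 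In fact, your local orthogonality computation is precisely the mechanism underlying~\eqref{eExpandIndicator}: the local factor equals $3$ exactly when the corresponding character is trivial at $r$, which matches the local splitting condition in Lemma~\ref{lHeisenbergExists}.
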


\subsection{\texorpdfstring{The $\mu$--functions}{The mu--functions}} 
To each pair $(f, f') \in V^2$ we associate an integer denoted by $\mu (f, f')$. This integer is a power of $3$ but it is not a symmetric function of $f$ and $f'$.

\begin{definition} 
For every $f$ and $f'$ in $V$, we define
$$
\mu (f, f')=
\begin{cases}
1 & \textup{ if } f(3)=f'(3) =0,\\
3^8 &\textup{ if } f(3)= 0, \, f' (3) \not= 0, \,  \textup{ and }  \chi (f) (3) =1,\\
3^{12}  &\textup{ if } f(3)= 0, \, f' (3) \not= 0, \,  \textup{ and }  \chi (f) (3) \in \{j, j^2\},\\
3^{12} &\textup{ if } f(3)\not= 0, \, f' (3) = 0, \,  \textup{ and }  \chi (f') (3) =1,\\
3^{16}  &\textup{ if } f(3)\not= 0, \, f' (3)  = 0, \,  \textup{ and }  \chi (f') (3) \in \{j, j^2\},\\
3^{12} &\textup{ if } f(3)\not= 0, \, f' (3)  \not= 0, \,  \textup{ and } \bigl(\chi(f'(3)\cdot f +2f(3)\cdot  f') \bigr) (3)=1,\\
3^{16} &\textup{ if } f(3)\not= 0, \, f' (3)  \not= 0, \,  \textup{ and } \bigl(\chi(f'(3)\cdot f +2f(3)\cdot  f') \bigr) (3) \in \{j, j^2\}.
\end{cases}
$$ 
\end{definition}

We give another definition
\begin{definition} 
\label{defmu(d)}
Let $f, f'\in V$ and let $d\in \N_3$.  We denote by $\mu (f, f', d)$ the positive integer defined by
$$
\mu (f, f', d) :=
\begin{cases}
3^{ {12}} & \textup{ if } 3\mid d, \ f(3)=f' (3) =0,\\
\mu( f, f')& \textup{ otherwise.}
\end{cases}
$$
\end{definition}

\subsection{The crucial sum} 
\label{crucialsum}
For positive integers $d$ and $a$, recall that ${\rm free} (d, a)$ is the largest squarefree integer dividing $d$ and coprime with $a$. In other words, we have
$$
{\rm free} (d, a)= \prod_{p \mid d\atop p\nmid a} p,
$$
which simplifies to ${\rm free }(d, a) =  d /(d, a)$, when $d$ is squarefree.
 
For $f$ and $f'\in V$ and $d \in \N_3$, we introduce the integer
\begin{equation}
\label{defD}
D(d,f, f') := \Delta (f)^6 \  {\rm free}\bigl( \Delta (f'), \Delta (f)\bigr)^4 \, \mu (f, f', d),
\end{equation}
the set 
$$
\mathcal S( f, f') := \bigl\{ d\in \N_3 : \bigl(d, \Delta (f) \Delta (f')\bigr)=1\bigr\},
$$
and the associated summatory function
\begin{equation}
\label{definitionS3}
S (X, f, f') := \sum_{d} 2^{\omega_{3}^* (d)},
\end{equation}
where the sum is over 
\begin{equation*}
d \in \mathcal S (f, f' ) \text{ and } {\rm free} (d,3)\leq \Bigl(\, X \Big/D (d, f, f' )\, \Bigr)^{1/6}. 
\end{equation*}
Gathering the above notations, we define the crucial sum ${\rm Heis}(X, 3)$ announced in \S \ref{Thegeneral}.

\begin{definition}
\label{definitionHeis} 
For $X \geq 2$ and the prime $\ell =3$, the associated {\it Heisenberg sum} ${\rm Heis} (X, 3)$ is
$$
{\rm Heis} (X, 3) := 2^{-2} 3^{-3}\, \underset{ f, f'\in V\atop f,\, f' \textup{\, lin. indep.}}{\sum \ \sum} 3^{ \vert\, {\rm supp}_3 f \, \cup \, {\rm supp}_3\, f'\vert}\cdot {\mathbbm  1} (f, f') \cdot S(X, f, f').
$$ 
\end{definition}

\noindent It is an exercise to verify that Definition \ref{definitionHeis} does not dependent on the way we have chosen the characters $\chi_p$ of order $3$ for each $p \in \mathbb{P}$. Combining Theorem \ref{tHDisc} (with $\ell = 3$) and equation (\ref{eExpandIndicator}), we obtain

\begin{proposition}
\label{pAnaHeisSum}
We have for every $X \geq 2$ the equality
\[
N(\textup{Heis}_3, X) = \textup{Heis}(X, 3).
\]
\end{proposition}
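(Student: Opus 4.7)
The plan is to verify that $\textup{Heis}(X, 3)$ is literally the formula for $N(\textup{Heis}_3, X)$ produced by Theorem \ref{tHDisc} (with $\ell = 3$), read through the bijection between characters $\chi : G_\Q \to \F_3$ and elements $f \in V$ given by $f \mapsto \chi(f) = \prod_p \chi_p^{f(p)}$. This bijection is a direct consequence of class field theory together with the paper's construction of the $\chi_p$: every finite-order character of $G_\Q$ with values in $\F_3$ has such a unique expression, and it sends linearly independent pairs to linearly independent pairs. Under this identification, $\Delta(f) = \widetilde{\Delta}(\chi(f))$ since both count the product of ramified primes different from $3$, and the outer constants of the two formulas agree: $(\ell-1)^{-2} = 2^{-2}$, while the factor $3^{-3} = \ell^{-3}$ appearing explicitly in Definition \ref{definitionHeis} is hidden inside the exponent $\omega(\widetilde{\Delta}(\chi) \widetilde{\Delta}(\chi')) - 3 = |\textup{supp}_3 f \cup \textup{supp}_3 f'| - 3$ of Theorem \ref{tHDisc}.

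I would then match the remaining factors inside the double sum. The equality $\mathbbm{1}(f, f') = \mathbbm{1}_{\theta_{\chi, \chi'}(\sigma, \tau) \textup{ trivial}}$ is precisely equation (\ref{eExpandIndicator}), deduced earlier from Lemma \ref{lHeisenbergExists}. For the summatory factor, $S(X, f, f')$ and $S_3(X, \chi, \chi', 3)$ have the same summand $2^{\omega_3^*(d)}$, so it suffices to compare the ranges. The index set $\mathcal{S}(f, f')$ coincides with $S_2(X, \chi, \chi', 3)$ by inspection, and the condition $\textup{free}(d, 3) \leq (X/D(d, f, f'))^{1/6}$ coincides with $\textup{free}(d, 3)^{6} \leq X/D(d, \chi, \chi', 3)$ provided $D(d, f, f') = D(d, \chi, \chi', 3)$. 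After cancelling the matching factors $\Delta(f)^6 = \widetilde{\Delta}(\chi)^6$ and $\textup{free}(\Delta(f'), \Delta(f))^4 = \textup{free}(\widetilde{\Delta}(\chi'), \widetilde{\Delta}(\chi))^4$, everything reduces to the identity $\mu(f, f', d) = \mu(\chi, \chi', d)$.

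This final $\mu$-identity is the only genuine obstacle and requires a case-by-case verification. The local dictionary one needs is: $3$ ramifies in $\Q(\chi)$ iff $f(3) \neq 0$; when $3$ is unramified in $\Q(\chi)$, it splits iff $\chi(f)(3) = 1$ and is inert iff $\chi(f)(3) \in \{j, j^2\}$. Running this through the five cases of $\mu(\chi, \chi')$ from Theorem \ref{tMinimalDisc} immediately matches the first five cases of $\mu(f, f')$. The subtle sixth and seventh cases are when $f(3)$ and $f'(3)$ are both nonzero: here the unique (up to scalar) direction in $\langle \chi, \chi' \rangle$ that is unramified at $3$ is spanned by the character associated to $f'(3)\cdot f - f(3) \cdot f'$, which equals $f'(3) \cdot f + 2 f(3) \cdot f' \pmod 3$, and whether $3$ splits in that cubic field is read off from $\chi(f'(3) \cdot f + 2 f(3) \cdot f')(3)$, explaining the coefficients appearing in Definition~3.4. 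Finally, the extra factor $3^{12}$ in Definition \ref{defmu(d)} when $3 \mid d$ and $f(3) = f'(3) = 0$ matches the extra contribution in $\mu(\chi, \chi', d)$ arising when the twist character $\chi''$ (ramified at $3$) forces new ramification of the Heisenberg extension at $3$, exactly as analyzed in Theorem \ref{tMinimalDisc}. The promised independence of $\textup{Heis}(X, 3)$ from the choice of the $\chi_p$ is a coordinate-change observation: replacing $\chi_p$ by $\chi_p^2$ corresponds to the substitution $f(p) \mapsto 2 f(p)$, which preserves supports, linear independence, $\Delta(\cdot)$, and every quantity in the sum.
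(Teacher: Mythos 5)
Your proposal is correct and matches the paper's approach exactly: the paper's own proof of Proposition \ref{pAnaHeisSum} is the one-line observation that it follows by ``combining Theorem \ref{tHDisc} (with $\ell = 3$) and equation (\ref{eExpandIndicator})''. What you have done is carry out in full the dictionary-checking that the paper leaves implicit, and your verification is accurate: the bijection $f \leftrightarrow \chi(f)$ from class field theory, the identification $\Delta(f) = \widetilde{\Delta}(\chi(f))$, the matching of the outer constants $(\ell-1)^{-2}\ell^{\omega(\cdot)-3}$ with $2^{-2}3^{-3}3^{|\mathrm{supp}_3 f \cup \mathrm{supp}_3 f'|}$, the agreement $S(X,f,f') = S_3(X,\chi,\chi',3)$ once $D(d,f,f') = D(d,\chi,\chi',3)$ is established, and the local-at-$3$ dictionary underlying $\mu(f,f',d) = \mu(\chi,\chi',d)$ --- in particular your observation that when $f(3), f'(3) \neq 0$ the unique unramified direction at $3$ in $\langle\chi,\chi'\rangle$ is $f'(3)f + 2f(3)f' \equiv f'(3)f - f(3)f' \pmod 3$, whose splitting behaviour controls the residue degree of $3$ in $\Q(\chi,\chi')$.
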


\noindent To state our main result we introduce the following notations 
\begin{itemize}
\item $\mathbbm 1_{ \{3\} }$ is the characteristic function of the set $\{ 3\}$,
\item $\psi_3$ is the multiplicative function defined on squarefree integers, satisfying 
\begin{equation}
\label{Defpsi3}
\psi_3 (p) = p/(p+2)
\end{equation}
(see the general definition given in \eqref{defpsi*}),

\item $\lambda$ is the multiplicative function defined on squarefree integers, satisfying
\begin{equation*}
\lambda (p)= \bigl( 1+2 /(p^{1/2} (p+2)\bigl)^{-1},
\end{equation*} 

\item $\alpha_3$ is the infinite product
\begin{equation}
\label{Defalpha3}
\alpha_3:= \frac 34 \prod_p \Bigl\{\, \Bigl( 1+\frac 1p + \frac {(\frac p3)}p\, 
\Bigr)\cdot \Bigl( 1 - \frac 1p\Bigr)
\Bigr\} 
\end{equation}
(see the general definition given in \eqref{Cell*}),

\item $H_0$ is the constant defined by\footnote{In $H_0$ we are summing over all primitive Dirichlet characters with order $3$ and with squarefree conductor $\Delta > 1$ coprime to $3$, while in the sum $H_2$ we are summing over all primitive Dirichlet characters with order $3$ and with conductor $9 \Delta$, where $\Delta \geq 1$ is squarefree and coprime to $3$.}
\begin{multline}
\label{defH0}
H_0:=  \sum_{\Delta \in \N_3^*\atop \Delta >1} \lambda  (\Delta)\, \psi_3 (\Delta)\cdot \frac{  {3}^{\omega (\Delta)}}{\Delta^{3/2}} 
\sum_{f\in V^* (\Delta)} \\ \Bigr\{\, \prod_{p\in \PP_3^*} \, \Bigl( \, 1 +2 \frac{ \chi (f) (p) +\chi (2f) (p)}{p+2}+\frac 2{p^{1/2}(p+2)} \Bigr)\,\Bigr\},
\end{multline}

\item $H_1$ is the constant defined by
\begin{multline}
\label{defH1}
H_1 :=  \sum_{\Delta \in \N_3^*\atop \Delta > 1} \lambda  (\Delta)\, \psi_3 (\Delta) \cdot \frac{{3}^{\omega (\Delta)}}{\Delta^{3/2}} 
\sum_{f\in V^* (\Delta)\atop \chi (f) (3) =1} \\
\Bigr\{\, \prod_{p\in \PP_3^*} \, \Bigl( \, 1 +2 \frac{ \chi (f) (p) +\chi (2f) (p)}{p+2}+\frac 2{p^{1/2}(p+2)} \Bigr)\,\Bigr\},  
\end{multline}

\item $H_2$ is the constant defined by 
\begin{multline}
\label{defH2}
H_2:=  \sum_{\Delta \in \N_3^*\atop \Delta \geq 1} \lambda  (\Delta)\, \psi_3 (\Delta)\cdot \frac{  {3}^{\omega (\Delta)}}{\Delta^{3/2}} 
\sum_{f\in V^* (\Delta)} \\ 
\sum_{\eta =1,2} \Bigr\{\, \prod_{p\in \PP_3^*} \, \Bigl( \, 1 +2 \frac{ \chi (f+\eta \mathbbm 1_{\{3\}}) (p) +\chi (2f +2 \eta \mathbbm 1_{\{3\}}) (p)}{p+2}+\frac 2{p^{1/2}(p+2)} \Bigr)\,\Bigr\}.
\end{multline}
\end{itemize} 

\noindent We now have all the tools to define the constant
\begin{equation}
\label{defcoeff}
c({\rm Heis}_3) := 2^{-2} \Bigl( \, \frac{{32}}{3^6}\cdot  H_0 +  \frac 8{3^6} \cdot H_1 +\frac{10}{3^7}\cdot  H_2\, \Bigr) \, \alpha_3.
\end{equation}
We will prove the following theorem, which combined with Proposition \ref{pAnaHeisSum} gives Theorem \ref{tMain}.

\begin{theorem} 
\label{GreatTheorem} 
Uniformly for $X\geq 2$, we have the equality
$$
{\rm Heis} (X, 3) = c({\rm Heis}_3) \cdot X^{1/4} \bigl( 1+ O \bigl(\,(\log X)^{-1}\bigr)\, \bigr).
$$
\end{theorem}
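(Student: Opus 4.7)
The plan is to evaluate $\textup{Heis}(X, 3)$ by interchanging the order of summation: first handle the inner sum $S(X, f, f')$ asymptotically for each fixed linearly independent pair $(f, f') \in V^2$ with $\mathbbm 1(f, f') = 1$, and then sum over $(f, f')$ using the character-sum expansion of $\mathbbm 1(f, f')$ to isolate a main term. The exponent $X^{1/4}$ will emerge after summing over the conductors $\Delta(f) = \Delta_1$ and $\Delta(f') = \Delta_2$ subject to the constraint $\Delta_1^6 \Delta_2^4 \leq X$ (up to constants) imposed by the truncation $\textup{free}(d, 3) \leq (X/D(d, f, f'))^{1/6}$.

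For the first step, the associated Dirichlet series
\[
\sum_{d \in \mathcal S(f,f')} \frac{2^{\omega_3^*(d)}}{d^s} = (1 + 3^{-s}) \prod_{\substack{p \in \PP_3^* \\ p \nmid \Delta_1 \Delta_2}} \left( 1 + \frac{2}{p^s} \right)
\]
satisfies, by the standard comparison with $\zeta(s) L(s, \chi_3)$ for $\chi_3$ the nontrivial Dirichlet character modulo $3$, the identity $\prod_{p \equiv 1 \bmod 3} (1 + 2 p^{-s}) = \zeta(s) L(s, \chi_3) G(s)$ with $G$ holomorphic and nonvanishing near $s = 1$. Perron's formula combined with the classical zero-free region for $L(s, \chi_3)$ then yields
\[
S(X, f, f') = C_{f, f'} \, (X/D_0(f, f'))^{1/6} + O_A\bigl( (X/D_0)^{1/6} (\log X)^{-A} \bigr),
\]
where $D_0 = \Delta_1^6 \, \textup{free}(\Delta_2, \Delta_1)^4 \, \mu(f, f')$ and $C_{f,f'}$ is an Euler product over primes $p \in \PP_3^*$ coprime to $\Delta_1 \Delta_2$. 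Next, expanding $\mathbbm 1(f, f')$ via Definition \ref{definition1} as a product over $r \mid \Delta_1 \Delta_2$ of sums of cubic character values $\chi(zf + z'f')(r)$, we split each local factor into the trivial contribution $(z, z') = (0, 0)$ and the nontrivial cubic character contributions. The trivial contributions produce the main term: combined with $C_{f, f'}$ and the weight $3^{|\textup{supp}_3 f \cup \textup{supp}_3 f'|}$, they yield at each $p \in \PP_3^*$ a local factor matching $1 + 2(\chi(f)(p) + \chi(2f)(p))/(p+2) + 2/(p^{1/2}(p+2))$, an overall factor $\alpha_3$ at unramified primes via $\psi_3$ and $\lambda$, and one of $H_0, H_1, H_2$ at $p = 3$ depending on the seven cases of $\mu(f, f')$. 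The routine evaluation $\sum_{\Delta_1^6 \Delta_2^4 \leq X} \Delta_1^{-1} \textup{free}(\Delta_2, \Delta_1)^{-2/3} \sim C \cdot X^{1/12}$ then produces a main term of size $X^{1/6} \cdot X^{1/12} = X^{1/4}$ with constant matching $c({\rm Heis}_3)$.

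The main obstacle is bounding the off-diagonal contributions, where at some $r$ the character $\chi(zf + z'f')(r)$ is nontrivial. These are genuine cubic Dirichlet character sums indexed by $(f, f') \in V^2$ with conductors ranging up to $X^{1/6}$ and $X^{1/4}$, and collectively they must contribute only $O(X^{1/4} (\log X)^{-1})$. This demands square-root type cancellation uniform over the range of $\Delta_1, \Delta_2$, which is where the restriction to $\ell = 3$ and the use of unique factorization in $\Z[j]$ become essential: one identifies cubic Dirichlet characters with Hecke-type characters on $\Z[j]$ via the cubic residue symbol of equation \eqref{conventiononcharacter}, and exploits cubic reciprocity together with Poisson summation or large-sieve inequalities to extract cancellation. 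Achieving the uniform $(\log X)^{-1}$ saving, and then tracking all local factors precisely to match the explicit constant $c({\rm Heis}_3)$ given in \eqref{defcoeff}, constitutes the analytic heart of the proof.
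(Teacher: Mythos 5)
Your high-level strategy is correct, and the paper does essentially this: interchange the order of summation, expand $\mathbbm{1}(f,f')$ via Definition \ref{definition1}, evaluate the inner sum using Perron's formula/complex analysis (Proposition \ref{complexanalysis*}), and handle the $\mu(f,f',d)$ case distinctions. However, your proposal misidentifies the analytic mechanism in the critical step, in a way that would derail the proof.

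You assert that the off-diagonal contributions ``demand square-root type cancellation uniform over the range of $\Delta_1, \Delta_2$,'' and that one must exploit ``Poisson summation or large-sieve inequalities.'' This is not what the paper needs, and pursuing it would be both unnecessary and likely doomed. The key structural observation (made explicit in the paper's remark at the end of \S\ref{StudyofET}) is that after trivially bounding the contribution of pairs with $\Delta(f) > (\log X)^{A_0}$ (Proposition \ref{largeDelta}), the constraint $\Delta(f)\,\Delta(f')^{2/3}(\Delta(f),\Delta(f'))^{-2/3}\leq X^{1/6}$ forces the surviving pairs to be dramatically \emph{lopsided}: $\Delta(f)$ is at most a power of $\log X$, while $\Delta(f')$ is nearly as large as $X^{1/4}$. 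Consequently the cubic characters one needs to oscillate always have \emph{small} modulus (bounded by a power of $\log X$), evaluated over a \emph{large} variable (the largest prime factor $p_\infty$ of $\Delta(f')$, or the large divisor $d_0$). The only analytic input is therefore a Siegel--Walfisz type theorem for $\Z[j]$ (Proposition \ref{SW*} and Lemma \ref{sumchi(pi)**}), which gives arbitrary log-power savings but no square-root cancellation, and no large sieve is invoked anywhere. This regime is explicitly contrasted in the paper with works such as \cite{Fo--Kl2, Fo--Kl4}, where variables of comparable size occur and genuine double-oscillation estimates are required; those tools are \emph{not} needed here.

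A second, smaller inaccuracy: you attribute the main term to ``the trivial contribution $(z,z')=(0,0)$'' at each local factor of $\mathbbm{1}(f,f')$. The paper's decomposition is different. It splits ${\rm supp}\,f\cup{\rm supp}\,f'$ into $\mathcal E_0$ (common primes), $\mathcal E_1$, $\mathcal E_1'$, and takes as main term the full factor $\prod_{r\mid\Delta_1'}\{1+\chi(f)(r)+\chi(2f)(r)\}$ (involving nontrivial characters of small conductor, which are then treated by Perron and Siegel--Walfisz), while the error term is the deviation of $\prod_{r\mid\Delta_0}\{\cdots\}\prod_{r\mid\Delta_1}\{\cdots\}$ from $1$. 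Cubic reciprocity (Lemma \ref{reciprocityapplied*}) is used precisely to turn the value of a small-modulus cubic character at the large prime $p_\infty$ into a non-trivial Hecke character on $\Z[j]$ to which the Siegel--Walfisz theorem applies. Without recognizing the lopsided sizes and the resulting Siegel--Walfisz-only setup, the error analysis cannot be completed along the lines you suggest.
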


By utilizing the full strength of the Siegel--Walfisz Theorem one can improve the above error term to $O_A \bigl( \,(\log X)^{-A}\,\bigr)$ where $A > 0$ is arbitrary.

\begin{remark}
\label{rComparison}
In \S \ref{ssComments}, we will prove that the Euler product appearing in the definition of $H_0$ is essentially the product of the square of the modulus of cubic $L$--functions at the point $1$, see equation (\ref{ePf}). This leads to the observation that the constant $c({\rm Heis}_3)$ has obvious similarities with the constant $c (D_4)$, the value of which is given in Theorem \ref{tDisc}. These two constants are defined as series of values of Dirichlet $L$--functions at the point $1$. In the case of $c(D_4)$ the associated characters have order $2$, in the case of $c({\rm Heis}_3)$ this order is $3$.
\end{remark}
 
\subsection{The archetypical sum}
We first consider the subsum ${\rm Heis}^*(X)$ defined by\footnote{From now on, many notations will be shortened by omitting the dependency on the prime $\ell=3$.}
\begin{equation*}
{\rm Heis}^* (X) := 2^{-2} 3^{-3}\, \underset{ f, f'\in V^*\atop f,\, f' \text{\, lin. indep.}}{\sum \ \sum} 3^{ \vert\, {\rm supp}_3 f \, \cup \, {\rm supp}_3\, f'\vert}\cdot {\mathbbm 1} (f, f') \cdot {S}^*(X, f, f'),
\end{equation*}
where
\begin{itemize}
\item $V^* $ is defined in \eqref{defV*}, 
\item $  {S}^*  (X, f, f')$ is the subsum of  ${S}  (X, f, f')$, where we exclude all the $d$ divisible by $3$ (see \eqref{definitionS3}). 
\end{itemize}
Note that the subsum ${\rm Heis}^*(X)$ contains exactly those terms from ${\rm Heis} (X, 3)$ with $\mu(f, f', d) = 1$. Algebraically, this subsum corresponds to nonic Heisenberg extensions unramified at $3$. This is a convenient first sum to consider, since it avoids the many case distinctions in the definition of the function $\mu(f, f')$. We have the equality
\begin{equation}
\label{expressionHeis}
{\rm Heis}^* (X) = 2^{-2} 3^{-3}\, \underset{ f, f'\in V^*\atop f,\, f' \text{\, lin. indep.}}{\sum \ \sum} 3^{ \vert\, {\rm supp}  f \, \cup \, {\rm supp} \, f'\vert}\cdot {\mathbbm 1} (f, f') \cdot \Bigl( \sum_{d}2^{\omega (d)}\Bigr),
\end{equation}
where $d$ satisfies the following conditions
\begin{equation}
\label{summation1}
\begin{cases}
d\in \N_{3}^*,\\
\bigl(d, \Delta (f) \Delta (f')\, \bigr)=1, \\ 
1 \leq d \leq X^{1/6} \Delta (f)^{-1} \, \Delta (f')^{-2/3}\, \bigl( \, \Delta (f), \Delta (f')\, \bigr)^{2/3}.
\end{cases}
\end{equation}
Let
\begin{multline}
\label{defc*}
C_{\rm Heis^*} := 2^{-2} 3^{-3} \alpha_3 \sum_{\Delta \in \N_3^*\atop \Delta >1} \psi_3 (\Delta)\cdot \frac{3^{\omega (\Delta)}}{\Delta^{3/2}} 
\hspace{-0.2cm} \sum_{f\in V^* (\Delta)} \Bigr\{\, \prod_{p\in \PP_3^*} \, \Bigl( \, 1 +2 \frac{ \chi (f) (p) +\chi (2f) (p)}{p+2}\, \Bigr)\,\Bigr\}\\
\times 
\Bigl\{ \, \prod_{p \in \PP_3^*\atop p \nmid \Delta} \Big( 1 + \frac 2{p^{1/2}\bigl(\, p+2 (1+\chi (f) (p) +\chi (2f) (p))\, \bigr)} \Bigr)\, 
\Bigr\}
\end{multline}
where $\alpha_3$ and $\psi_3 $ are defined in \eqref{Defalpha3} and in \eqref{Defpsi3}. Thanks to \eqref{chi(f)=0or} and easy transformations, $C_{\rm Heis^*}$ can also be written as
$$ 
C_{\rm Heis^*} := 2^{-2} 3^{-3} \alpha_3  H_0,
$$
with $H_0$ defined in \eqref{defH0}. We will prove the following

\begin{proposition} 
\label{archetype}
Uniformly for $X\geq 2$ one has the equality
$$
{\rm Heis^*} (X) = C_{\rm Heis^*}  \cdot X^{1/4}  +O \bigl(\ X^{1/4}(\log X)^{-1}\, \bigr).
$$
\end{proposition}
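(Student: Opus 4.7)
The proof of Proposition \ref{archetype} proceeds by fixing the first character $f \in V^*$, asymptotically evaluating the inner double sum over $(f', d)$, and then summing over $f$.

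\emph{Step 1: Rewriting the inner sum.} Fix $f \in V^*$ with $\Delta := \Delta(f)$. For each $f' \in V^*$ linearly independent from $f$, write uniquely $\Delta(f') = \delta e$ with $\delta := \gcd(\Delta(f'), \Delta) \mid \Delta$ and $e \in \N_3^*$ coprime to $\Delta$. Then $\textup{free}(\Delta(f'),\Delta) = e$, and the conditions \eqref{summation1} become $(d,\Delta e) = 1$ together with $e^4 d^6 \leq X/\Delta^6$. Expand the weight $3^{|\textup{supp}\,f \cup \textup{supp}\,f'|}\mathbbm 1(f,f')$ as a product of local factors via Definition \ref{definition1}: each local factor at a prime $r\mid\Delta\Delta(f')$ is a cubic character sum over the line $\{f(r)z+f'(r)z'=0\}\subset\F_3^2$, taking values in $\{0,3\}$. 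At primes $r\mid e$ this factor depends only on $f$ and equals $1+\chi(f)(r)+\chi(2f)(r)$; at primes $r\mid\Delta$ it depends on $f'$, either directly through $\chi(f')(r)$ (for $r\mid\Delta/\delta$) or through the cross-coupling $g = -f'(r)f+f(r)f'$ (for $r\mid\delta$). Summing over the $2^{\omega(\delta e)}$ choices of $f'\in V^*(\delta e)$ and using orthogonality of cubic characters, the resulting weight factors multiplicatively as $g_f(\delta)\,h_f(e)$, where $h_f$ is the multiplicative function on $\N_3^*$ given by $h_f(p) = 2(1+\chi(f)(p)+\chi(2f)(p)) \in \{0, 6\}$.

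\emph{Step 2: Mellin/Perron inversion.} The residual sum to estimate is
\[
\Sigma_f(Y) := \sum_{\substack{e,d\in\N_3^*\\ (ed,\Delta)=1,\ (e,d)=1\\ e^4 d^6 \leq Y}} h_f(e) \cdot 2^{\omega(d)}, \qquad Y := X/\Delta^6,
\]
whose generating Dirichlet series admits the Euler product
\[
G_f(s_1,s_2) = \prod_{\substack{p\in\PP_3^*\\ p\nmid\Delta}} \Bigl(1+\frac{h_f(p)}{p^{s_1}}+\frac{2}{p^{s_2}}\Bigr).
\]
Specializing $(s_1,s_2) = (4s, 6s)$ and applying Perron's formula gives $\Sigma_f(Y) = (2\pi i)^{-1}\int_{(\sigma)} G_f(4s,6s)\,Y^s s^{-1}\,ds$. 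The contribution $2/p^{6s}$ is regular at $s=1/4$; the singular behaviour comes entirely from $\sum_p h_f(p)/p^{4s}$. Since $h_f(p)=6$ exactly on the density--$1/6$ subset of primes in $\PP_3^*$ where $\chi(f)(p)=1$, a Mertens-type estimate for the non-trivial cubic character $\chi(f)$ gives $\sum_p h_f(p)/p^{4s} = \log(1/(4s-1)) + O(1)$, so $G_f(4s,6s)$ has a \emph{simple} pole at $s=1/4$. The algebraic identity $(1+2/p)(1+A/(p+2)) = 1+(A+2)/p$, applied with $A = 2(\chi(f)(p)+\chi(2f)(p))+2/p^{1/2}$, lets us compare the Euler factor of $G_f$ with that of $\zeta(s_1)L(s_1,\chi_3)$; shifting the contour past $s=1/4$ and using standard subconvexity bounds then yields
\[
\Sigma_f(Y) = R_f\,Y^{1/4} + O_A\!\bigl(Y^{1/4}(\log Y)^{-A}\bigr),
\]
with
\[
R_f = L(1,\chi_3)\cdot\alpha_3\cdot \prod_{\substack{p\in\PP_3^*\\ p\nmid\Delta}} \Bigl(1+\frac{2(\chi(f)(p)+\chi(2f)(p))}{p+2}+\frac{2}{p^{1/2}(p+2)}\Bigr)
\]
(times a local correction at primes $p\mid\Delta$ encoded in the $\psi_3$ and $\lambda$--factors of $H_0$).

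\emph{Step 3: Outer summation and main obstacle.} Summing over $f \in V^*(\Delta)$ and then over $\Delta \in \N_3^*$ with $\Delta > 1$, weighted by $g_f(\Delta)\,R_f\cdot X^{1/4}/\Delta^{3/2}$, one recovers exactly $C_{\textup{Heis}^*} X^{1/4} + O(X^{1/4}(\log X)^{-1})$; the absolute convergence of $\sum_{\Delta \in \N_3^*} 3^{\omega(\Delta)}/\Delta^{3/2}$ justifies the interchange of all summations and produces the stated error term. The main obstacle lies in Step 1: the cross-coupling $g = -f'(r)f + f(r)f'$ at primes $r\mid\delta$ entangles the values of $f'$ at different primes, and the character $\chi(f')(r)$ at $r\mid\Delta/\delta$ in turn depends on $f'(p)$ for every $p\mid\delta e$ with $p\ne r$, so extracting the clean multiplicative factorization $g_f(\delta)\,h_f(e)$ requires careful orthogonality arguments on $\F_3^{\omega(\delta e)}$ exploiting the fact---visible in the proof of Lemma \ref{lHeisenbergExists}---that the local factors jointly detect the vanishing of $\theta_{\chi(f),\chi(f')}$. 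A secondary technical hurdle is obtaining uniformity in $\Delta$ of the Perron error term $O_A(Y^{1/4}(\log Y)^{-A})$, which uses standard zero-free regions for $\zeta$ and $L(\cdot,\chi_3)$.
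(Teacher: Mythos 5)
Your Step 1 contains a false claim, and this is not a technical gap but a structural error that undermines the whole plan.

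\textbf{The factorization in Step 1 is not an identity.} Take $\Delta(f)=q$ a single prime, $\delta=\gcd(\Delta(f'),\Delta(f))=q$, and $e=p$ a single prime coprime to $q$. Then by Lemma~\ref{expand1} (with $\mathcal E_0=\{q\}$, $\mathcal E_1=\emptyset$, $\mathcal E'_1=\{p\}$) the factor at $r=q$ is $1+\chi_p(q)^{2f(q)f'(p)}+\chi_p(q)^{f(q)f'(p)}=3\,[\chi_p(q)=1]$, which is actually constant in $f'$, and the factor at $r=p$ is $3\,[\chi(f)(p)=1]$. Summing over the $4$ functions $f'\in V^*(qp)$ gives
\[
\sum_{f'\in V^*(qp)} 3^{|\,\mathrm{supp}\,f\cup\mathrm{supp}\,f'|}\,\mathbbm 1(f,f') = 36\,[\chi_p(q)=1]\,[\chi(f)(p)=1].
\]
If this were $g_f(q)\,h_f(p)$ with $h_f(p)=6[\chi(f)(p)=1]$, you would need $g_f(q)=6[\chi_p(q)=1]$, which depends on $p$. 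The two indicators $[\chi_q(p)=1]$ and $[\chi_p(q)=1]$ are \emph{not} equivalent: cubic reciprocity (Lemma~\ref{reciprocityapplied*}) only gives $\chi_p(q)=\overline{\chi_q(p)}(\pi/\rho)_3^2$, so the two conditions can fail independently (already for $p=7$, $q=13$: $13$ is a cube mod $7$ but $7$ is not a cube mod $13$). Taking two primes $p_1,p_2$ with $\chi_q(p_1)=\chi_q(p_2)=1$ but $\chi_{p_1}(q)=1\ne\chi_{p_2}(q)$ (which exist in abundance by Chebotarev) yields a genuine contradiction. You acknowledge the entanglement in your ``main obstacle'' paragraph, but the obstacle is not that an orthogonality argument is missing---the claimed identity is simply false, and no orthogonality on $\F_3^{\omega(\delta e)}$ will produce it.

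\textbf{Why this matters structurally.} The residue-symbol entanglement between $\delta$ and $e$ that blocks your factorization is precisely the analytic engine of the paper's proof. The paper does not attempt to sum over $f'$ in closed form. Instead it (i) first uses trivial bounds (Propositions~\ref{largeDelta}, \ref{smallDelta'}, \ref{largeomega}) to reduce to the range $\Delta(f)\le\LL^{A_0}$, $d\le\LL^{A_4}$, $\Delta(f')\gg X^{1/4}\LL^{-A_2}$ with $\omega(\Delta(f'))\ll\log\log X$; (ii) decomposes $U(X,d)=\mathrm{MT}(X,d)+\mathrm{ET}(X,d)$, where $\mathrm{MT}$ keeps only $\prod_{r\mid\Delta'_1}\{\cdots\}$ (which is exactly your $h_f$-piece) and $\mathrm{ET}$ contains the cross-coupling; and (iii) controls $\mathrm{ET}$ by pulling out the largest prime $p_\infty$ of $\Delta'_1$, using cubic reciprocity (Lemma~\ref{reciprocityapplied*}) to convert the relevant residue symbols into a non-trivial character of $\Z[j]$ in the standard prime $\pi_\infty$ over $p_\infty$ (Lemma~\ref{nontrivial**}), and applying a Siegel--Walfisz theorem for $\Z[j]$ (Proposition~\ref{SW*}, Lemma~\ref{sumchi(pi)**}). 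Your Step~2 Perron analysis implicitly discards $\mathrm{ET}$ entirely: the series $G_f(s_1,s_2)=\prod(1+h_f(p)/p^{s_1}+2/p^{s_2})$ only encodes the $\mathrm{MT}$ weight, so even if you could justify it you would have proved nothing about the full sum without a separate oscillation estimate for the dropped piece.

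\textbf{A secondary problem in Step 2.} The conductor of $\chi(f)$ and $\chi(2f)$ is $\Delta(f)$, which can be as large as $X^{1/6}$. Your proposed contour shift past $s=1/4$ needs a zero-free region for the degree-$\ge 2$ $L$-function $L(s,\chi(f))L(s,\chi(2f))$ (and its twists by $(\cdot/3)$), uniformly in $\Delta(f)$ up to $X^{1/6}$, and unconditional Siegel-type bounds do not give log-power savings in that range. This is exactly why the paper first shrinks $\Delta(f)$ to $\le\LL^{A_0}$ and $d$ to $\le\LL^{A_4}$ by elementary means \emph{before} invoking any analytic input; after that reduction, the remaining polar analysis lives in Proposition~\ref{complexanalysis*} for the $d_0$-sum, with the character pieces $d_1,d_2$ capped at $D_0=X^{1/100}$ via a rational Siegel--Walfisz estimate. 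Your appeal to ``standard zero-free regions for $\zeta$ and $L(\cdot,\chi_3)$'' does not address the cubic $L$-functions of growing conductor that actually govern $G_f$.
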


We will prove in Proposition \ref{1289*} that $C_{\rm Heis^*}$ is positive, which implies that the above formula is an asymptotic one.

\subsection{The other sums} 
\label{theothersums*}
The subsum ${\rm Heis^*} (X)$ will be a model to treat the other subsums constituting ${\rm Heis } (X, 3)$. According to the definition of the $\mu$--functions, it is natural to consider the following fourteen subsums of 
${\rm Heis } (X, 3)$, denoted by ${\rm Heis}^{\eqref{C1}} (X)$, ${\rm Heis}^{\eqref{C2}} (X)$, ${\rm Heis}^{\eqref{C3}}$, ..., ${\rm Heis}^{\eqref{C14}} (X)$
where the exponent of ${\rm Heis}$ corresponds to the additional restrictions imposed to the variables of summation $d$ in $S (X,f, f')$ and to the pair $(f, f')$ in the first double summation in the Definition~\ref{definitionHeis}:
\begin{equation}
\label{C1}
3\nmid d, \, f(3)=f'(3) =0,
\end{equation}
\begin{equation}
\label{C2}
3\nmid d, \, f(3)=0, \, f'(3) \not=0,\, \chi (f) (3)=1,
\end{equation}
\begin{equation}
\label{C3}
3\nmid d, \, f(3)=0, \, f'(3) \not=0,\, \chi (f) (3) \in \{j, j^2\},
\end{equation}
\begin{equation}
\label{C4}
3\nmid d, \, f(3)\not=0, \, f'(3) =0,\, \chi (f') (3)=1,
\end{equation}
\begin{equation}
\label{C5}
3\nmid d, \, f(3)\not= 0, \, f'(3) =0,\, \chi (f') (3) \in \{j, j^2\},
\end{equation}
\begin{equation}
\label{C6}
3\nmid d, \, f(3)\not=0, \, f'(3) \not=0,\,  \bigl(\chi(f'(3) \cdot f +2f(3) \cdot f') \bigr) (3)=1,
\end{equation}
\begin{equation}
\label{C7}
3\nmid d, \, f(3)\not= 0, \, f'(3) \not=0,\,  \bigl(\chi(f'(3) \cdot f +2f(3) \cdot f') \bigr) (3) \in \{j, j^2\},
\end{equation}
\begin{equation}
\label{C8}
3\mid d, \, f(3)=f'(3) =0,
\end{equation}
\begin{equation}
\label{C9}
3\mid d, \, f(3)=0, \, f'(3) \not=0,\, \chi (f) (3)=1,
\end{equation}
\begin{equation}
\label{C10}
3\mid d, \, f(3)=0, \, f'(3) \not=0,\, \chi (f) (3) \in \{j, j^2\},
\end{equation}
\begin{equation}
\label{C11}
3\mid d, \, f(3)\not=0, \, f'(3) =0,\, \chi (f') (3)=1,
\end{equation}
\begin{equation}
\label{C12}
3\mid d, \, f(3)\not= 0, \, f'(3) =0,\, \chi (f') (3) \in \{j, j^2\},
\end{equation}
\begin{equation}
\label{C13}
3\mid d, \, f(3)\not=0, \, f'(3) \not=0,\,  \bigl(\chi(f'(3 )\cdot f +2f(3) \cdot f') \bigr) (3)=1,
\end{equation}
\begin{equation}
\label{C14}
3\mid d, \, f(3)\not= 0, \, f'(3) \not=0,\,  \bigl(\chi(f'(3) \cdot f +2f(3) \cdot f') \bigr) (3) \in \{j, j^2\}.
\end{equation}  
In each of these cases, the factor  $\mu (d,f, f')$ is constant. We have the obvious equalities
$$
{\rm Heis}^* (X) = {\rm Heis}^{\eqref{C1} }(X),
$$
and
\begin{equation}
\label{Heis=Heis+Heis}
{\rm Heis} (X, 3)={\rm Heis}^{\eqref{C1}}(X)+{\rm Heis}^{\eqref{C2}}+ \cdots + {\rm Heis}^{\eqref{C14}}(X).
\end{equation}
 
By following the proof of Proposition \ref{archetype} and by indicating the alterations between the different cases, we will prove in \S \ref{theothersums}

\begin{proposition}
\label{allthecases} 
Let $(i, j) = $ {\rm \eqref{C1}}, {\rm  \eqref{C2}},  {\rm \eqref{C3}}, \dots, or {\rm \eqref{C14}}. Then there exists a constant $C^{(i, j)} > 0$ such that
$$
{\rm Heis}^{(i, j)} (X )= 2^{-2} 3^{-3} \alpha_3 \,C^{(i, j)} X^{1/4} \bigl( 1+ O \bigl(\,(\log X)^{-1}\, \bigr)\, \bigr).
$$
Furthermore, we have the equalities
\begin{equation*}
\begin{matrix}
C^{\eqref{C1}} &= &H_0, & C^{\eqref{C8}}&=& 3^{ {-3}}\cdot H_0 ,\\
C^{\eqref{C2}} &= & 2\cdot 3^{-2}\cdot H_1, & C^{\eqref{C9}}&=&  2\cdot 3^{-2}\cdot H_1,\\
C^{\eqref{C3}} &= & 2\cdot 3^{-3} \cdot (H_0-H_1), & C^{\eqref{C10}}&=& 2\cdot 3^{-3} \cdot (H_0-H_1),\\
C^{\eqref{C4}} &= &  3^{-4}\cdot H_2  , & C^{\eqref{C11}}&=&    3^{-4}\cdot H_2  ,\\
C^{\eqref{C5}} &= &2\cdot 3^{-5}\cdot H_2 , & C^{\eqref{C12}}&=& 2\cdot 3^{-5}\cdot H_2 ,\\
C^{\eqref{C6}} &=& 2\cdot 3^{-4} \cdot H_2 , & C^{\eqref{C13}}&=&  2\cdot 3^{-4} \cdot H_2 ,\\
C^{\eqref{C7}}& =& 4\cdot 3^{-5}\cdot H_2 , & C^{\eqref{C14}}&=&   4\cdot 3^{-5}\cdot H_2  .\\
\end{matrix}
\end{equation*}
\end{proposition}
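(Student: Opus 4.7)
The strategy is to iterate the proof of Proposition \ref{archetype} thirteen more times, with small modifications dictated by each case. The key observation that makes this systematic is that on the restricted summation range defining ${\rm Heis}^{(i,j)}(X)$, the quantity $\mu(f,f',d)$ takes a single value $3^{e_{(i,j)}}$ depending only on the case. Hence the defining inequality ${\rm free}(d,3) \leq (X/D(d,f,f'))^{1/6}$ becomes the archetypical inequality with the right-hand side rescaled by a constant factor $3^{-e_{(i,j)}/6}$, and since the final main term is of order $X^{1/4}$, this rescaling propagates to an overall multiplicative factor $3^{-e_{(i,j)}/4}$ on $C^{(i,j)}$.

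For cases \eqref{C1}--\eqref{C7} one has $3 \nmid d$, so $d \in \N_3^*$ and the archetypical analysis of the inner $d$-sum applies essentially verbatim. For cases \eqref{C8}--\eqref{C14} one has $3 \mid d$, and we substitute $d = 3d'$ with $d' \in \N_3^*$ coprime to $\Delta(f)\Delta(f')$; since $3 \notin \PP_3^*$, we have $2^{\omega_3^*(d)} = 2^{\omega_3^*(d')}$ and ${\rm free}(d, 3) = d'$, so the $d'$-sum is again of archetypical shape, now with $\mu = 3^{12}$. In both families the inner-sum estimate yields an asymptotic of shape $c \cdot X^{1/4}(1+O((\log X)^{-1}))$ with the same Euler-product structure as in Proposition \ref{archetype}.

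Turning to the outer double sum over $(f,f')$, one exploits the multiplicative structure of $\mathbbm{1}(f,f')$ to separate the $(f, f')$-sum and perform the $f'$-sum, collapsing the expression into a weighted $f$-sum whose local factors at primes in $\PP_3^*$ match those appearing in $H_0$, $H_1$, $H_2$. Without restriction at $3$ this yields $H_0$; the restrictions $\chi(f)(3) = 1$ and $\chi(f)(3) \in \{j, j^2\}$ select $H_1$ and $H_0 - H_1$ respectively. In cases with $f(3) \neq 0$ one writes $f = \tilde f + \eta \, \mathbbm{1}_{\{3\}}$ with $\tilde f \in V^*$ and $\eta \in \{1,2\}$, so that the sum over $\eta$ reproduces exactly the two-term sum inside $H_2$.

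Assembling these ingredients yields $C^{(i,j)}$ as the product of the archetypical value ($H_0$, $H_1$, $H_0 - H_1$, or $H_2$), the rescaling factor $3^{-e_{(i,j)}/4}$, the cardinality of admissible values of $(f(3), f'(3)) \in \F_3 \times \F_3$, and the density of the character constraint at $3$. The main obstacle lies in the mixed cases \eqref{C6}, \eqref{C7}, \eqref{C13}, \eqref{C14}, where both $f(3) \neq 0$ and $f'(3) \neq 0$ and the constraint is imposed on $\chi(f'(3) f + 2 f(3) f')(3)$. Observe that $(f'(3) f + 2 f(3) f')(3) = 3 f(3) f'(3) = 0$ in $\F_3$, so the composite element lies in $V^*$; to reduce the constraint to an $H_2$-type sum one must perform an $\F_3$-linear change of variables on $(f, f')$ which preserves $\mathbbm{1}(f, f')$, the pair of conductors $(\Delta(f), \Delta(f'))$, and the weight $3^{|{\rm supp}_3 f \cup {\rm supp}_3 f'|}$, and account bijectively for the fraction $\tfrac{1}{3}$ (when the constraint is $\chi = 1$) or $\tfrac{2}{3}$ (when the constraint is $\chi \in \{j, j^2\}$) of pairs satisfying the resulting constraint. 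Carrying out this bookkeeping precisely is what produces the exact constants $C^{(i,j)}$ displayed in the statement.
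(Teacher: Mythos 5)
Your overall strategy matches the paper's: pair the $3\mid d$ cases with the $3\nmid d$ cases via $d=3d'$; record that $\mu(f,f',d)$ is constant on each case, so the cap on ${\rm free}(d,3)$ rescales by $3^{-e_{(i,j)}/6}$ and the main term by $3^{-e_{(i,j)}/4}$; decompose $f,f'$ to peel off $\eta=f(3)$, $\eta'=f'(3)$; and for \eqref{C2}, \eqref{C3} observe that the constraint $\chi(f)(3)=1$ (resp.\ $\in\{j,j^2\}$) just filters the small-variable $f$-sum, producing $H_1$ (resp.\ $H_0-H_1$). This is exactly Proposition~\ref{1494} together with \S\ref{sectionC2}--5.4. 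Your arithmetic for the final constants is also correct provided ``cardinality of admissible $(f(3),f'(3))$'' is read as the cardinality of admissible $f'(3)$ only, since the sum over $\eta=f(3)\in\{1,2\}$ is already built into the definition of $H_2$ in \eqref{defH2}; as written, multiplying by $\#\{(f(3),f'(3))\}$ double-counts the $\eta$-sum in cases \eqref{C4}--\eqref{C7}.

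The genuine gap is in the mechanism you propose for the fractions $\tfrac13$ and $\tfrac23$ in cases \eqref{C4}--\eqref{C7} (and \eqref{C11}--\eqref{C14}). You describe this as a bijective bookkeeping argument via an $\F_3$-linear change of variables on $(f,f')$, but no such change of variables is available: any $\mathrm{GL}_2(\F_3)$-transformation other than rescaling the two coordinates separately mixes supports, and hence fails to preserve the pair of conductors $(\Delta(f),\Delta(f'))$, the coprimality structure inside $D(d,f,f')$, and the asymmetric factor ${\rm free}(\Delta(f'),\Delta(f))$. More importantly, the factors $\tfrac13,\tfrac23$ are not of combinatorial origin at all: for a \emph{fixed} pair $(f,f')$, the value $\chi(f')(3)$ (or $\chi(f'(3)f+2f(3)f')(3)$) is a definite element of $\{1,j,j^2\}$ and there is nothing to count bijectively. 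What actually happens is analytic. One detects the constraint by inserting the orthogonality relation $\tfrac13\bigl(1+\chi(\cdot)(3)+\chi(2\,\cdot)(3)\bigr)$ (respectively the two conjugate indicators, summed, for the $\{j,j^2\}$ cases) and then folds the character factors at $r=3$ into the product over $r\mid 3\Delta_1$ (for \eqref{C4},\eqref{C5}) or $r\mid 3\Delta_0$ (for \eqref{C6},\eqref{C7}). Since $\chi(f')(3)$ and $\chi(f'(3)f+2f(3)f')(3)$ involve the large variable $f'_1$, the non-constant terms of the indicator oscillate as $\Delta'_1$ varies, so by the Siegel--Walfisz input of Lemma~\ref{sumchi(pi)**} they end up in the error term alongside the rest of $\Pi^{\rm et}$; only the constant $1$ survives in $\Pi^{\rm mt}$, producing the prefactor $\tfrac13$ (or $\tfrac23$ when two such indicators are summed). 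This is not a variant of your proposal but a replacement for it, and it is the step your write-up leaves unresolved.

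Your observation that $(f'(3)f+2f(3)f')(3)=3f(3)f'(3)=0$ in $\F_3$, so that $f'(3)f+2f(3)f'\in V^*$ and $\chi(f'(3)f+2f(3)f')(3)$ is a genuine nonzero cube root of unity, is correct and is in fact the content of the paper's appeal to \eqref{convention} at the start of the treatment of \eqref{C6}; it is needed precisely so that the indicator above makes sense, but it does not by itself reduce those cases to an $H_2$-type sum.
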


Gathering the decomposition given by \eqref{Heis=Heis+Heis} and the explicit values given by Proposition \ref{allthecases}, we complete the proof of Theorem \ref{GreatTheorem} through the equality
$$
c({\rm Heis}_3) = 2^{-2} 3^{-3}\, \alpha_3 \bigl( C^{\eqref{C1}} + \cdots + C^{\eqref{C14}}\bigr),
$$
which gives the explicit value announced in \eqref{defcoeff}. The inequality $c({\rm Heis}_3) > 0$ is a consequence of the inequalities $H_0 > 0$ (see Proposition \ref{1289*} below) and of the trivial inequality
$$
{\rm Heis} (X,3)\geq {\rm Heis}^* (X),
$$
since every subsum ${\rm Heis}^{\eqref{C2}}(X), \dots, {\rm Heis}^{\eqref{C14}} (X)$ is non-negative.
 
\section{Study of the archetypical sum}  
In this section we will prove Proposition \ref{archetype} concerning the sum ${\rm Heis}^* (X)$ as it appears in \eqref{expressionHeis} with the conditions of summation \eqref{summation1}.

\subsection{Trivial bounds and restrictions}
The number of positive divisors of the integer $n \geq 1$ is denoted  by  $\tau (n)$  and for $X\geq 1$, we write 
$$
\LL := \log 2X.
$$
In the course of the statements or proofs, the reader will find constants $A_0, A_1, \dots$ (particularly as exponents of $\LL$) for which it is possible to give explicit values, but we will refrain from doing so.

\subsubsection{Classical lemmas from analytic number theory}  
We will use the following bounds.

\begin{lemma}
\label{sumbomega(n)}
Let $b > 0$ be given. Then uniformly for $X \geq 1$ one has 
$$
\sum_{n\leq X} b^{\omega (n)} = O (X \LL^{b-1}) \textup{ and } \sum_{n\leq X\atop n \in \N_{3}^*} b^{\omega (n)} = O (X \LL^{b/2-1}) 
$$
\end{lemma}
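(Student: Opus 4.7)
The plan is to obtain both estimates by Dirichlet series factorisation followed by an application of the Selberg--Delange theorem (see, e.g., Tenenbaum, \emph{Introduction to Analytic and Probabilistic Number Theory}, Chapter~II.5), or equivalently by the Shiu bound for multiplicative functions.

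For the first estimate, the function $b^{\omega(n)}$ is multiplicative, and its Dirichlet series factors as
$$
\sum_{n\geq 1}\frac{b^{\omega(n)}}{n^s} \ = \ \prod_p\Bigl(1+\frac{b}{p^s-1}\Bigr) \ = \ \zeta(s)^b\,G_1(s),
$$
where $G_1(s)$ is given by an Euler product that converges absolutely, hence is holomorphic and bounded, in the half--plane $\Re s > 1/2$. Fixing a branch of $\log \zeta(s)$ on the standard zero-free region, the Selberg--Delange theorem yields $\sum_{n\leq X} b^{\omega(n)} \ll X \LL^{b-1}$, which is the first claim.

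For the second estimate, I would restrict to squarefree $n$ all of whose prime divisors lie in $\PP_3^* = \{p : p\equiv 1\bmod 3\}$. Writing $\chi$ for the non-trivial Dirichlet character modulo $3$, we have the identity $\mathbbm{1}_{p\equiv 1\bmod 3}=\tfrac12(1+\chi(p))$ for every prime $p\neq 3$, which, after a routine rearrangement of Euler factors, gives
$$
\sum_{n\in\N_3^*}\frac{b^{\omega(n)}}{n^s} \ = \ \prod_{p\equiv 1\bmod 3}\Bigl(1+\frac{b}{p^s}\Bigr) \ = \ \zeta(s)^{b/2}\,L(s,\chi)^{b/2}\,G_2(s),
$$
with $G_2(s)$ again holomorphic and bounded for $\Re s>1/2$. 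Since $L(s,\chi)$ is entire with $L(1,\chi)\neq 0$, the only singularity of the left-hand side at $s=1$ is a branch-point of order $b/2$ coming from $\zeta(s)^{b/2}$. A second appeal to Selberg--Delange then yields $\sum_{n\leq X,\, n\in\N_3^*} b^{\omega(n)}\ll X\LL^{b/2-1}$.

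The only step requiring actual care is the identification of the order of the singularity at $s=1$ in each factorisation and the verification that the remainder Euler products $G_1, G_2$ converge absolutely for $\Re s>1/2$; both are straightforward since the discrepancy between each Euler factor and its approximation by $(1-1/p^s)^{-b}$ (resp.\ $(1-1/p^s)^{-b/2}(1-\chi(p)/p^s)^{-b/2}$) is of size $O(p^{-2s})$. The main potential obstacle, namely handling non-integral exponents $b$, is circumvented by the usual device of fixing a branch of $\log\zeta(s)$ in a zero-free region to the left of $\Re s=1$, which is standard in this context.
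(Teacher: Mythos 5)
Your proof is correct. The paper states this lemma as a classical fact without proof, so there is no paper argument to compare against; but your two Dirichlet-series factorisations are correct, the remainder Euler products $G_1, G_2$ do converge absolutely in $\Re s > 1/2$ (each factor differs from $1$ by $O(p^{-2\sigma})$), and the Selberg--Delange method then delivers the bounds, with the case $1 \le X < 3$ handled trivially. Two small remarks: (1) since only an upper bound (not an asymptotic) is required, the appeal to Selberg--Delange is slightly heavier than necessary, and the Halberstam--Richert/Shiu-type inequality $\sum_{n \le x} f(n) \ll \tfrac{x}{\log x}\exp\bigl(\sum_{p \le x} f(p)/p\bigr)$ for non-negative multiplicative $f$ with $f(p^k) \le A^k$ gives both estimates in one line once combined with Mertens' theorem in the progression $p \equiv 1 \bmod 3$ --- you allude to this alternative and it is the more economical route; (2) in the second factorisation you should note, for completeness, that $L(s,\chi)$ is nonvanishing in the relevant region (so $L(s,\chi)^{b/2}$ admits a well-defined branch that stays of polynomial growth), but since $L(s,\chi)$ is entire with $L(1,\chi)\ne 0$ this is standard, exactly as you say.
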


The following lemma shows that in the sums we will meet, the contribution of the integers with a huge number of prime factors is small.

\begin{lemma}
\label{manyprimefactors} 
Let $b$ and $b'>0$ be given. Then there exists $B_0 = B_0(b, b')$ such that uniformly for $X \geq 1$ one has
$$
\sum_{n \leq X\atop \omega (n) > B_0 \log \log X} b^{\omega (n)} = O \bigl(\,  X \LL^{-b'} \bigr).
$$
\end{lemma}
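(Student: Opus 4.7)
The plan is to apply the standard Rankin moment trick. For any real number $c > 1$, the inequality $c^{\omega(n) - K} \geq 1$ holds whenever $\omega(n) > K$, so
\[
\sum_{\substack{n \leq X \\ \omega(n) > K}} b^{\omega(n)} \;\leq\; c^{-K} \sum_{n \leq X} (bc)^{\omega(n)}.
\]
The sum on the right is directly controlled by the preceding Lemma \ref{sumbomega(n)} (applied with $bc$ in place of $b$), which yields
\[
\sum_{\substack{n \leq X \\ \omega(n) > K}} b^{\omega(n)} \;=\; O\bigl(c^{-K} \, X \, \LL^{bc - 1}\bigr).
\]

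Next, I would set $K := B_0 \log \log X$, so that $c^{-K}$ becomes $(\log X)^{-B_0 \log c}$, which is $\LL^{-B_0 \log c}(1 + o(1))$ as $X \to \infty$. This transforms the bound into $O\bigl(X \, \LL^{bc - 1 - B_0 \log c}\bigr)$. It remains to choose the constants in the right order: first fix any $c > 1$ (for instance $c = e$, so that $\log c = 1$), then pick $B_0 = B_0(b, b')$ large enough that
\[
B_0 \log c \;\geq\; bc - 1 + b'.
\]
This gives the announced estimate for all sufficiently large $X$. For $X$ in a bounded range (say $X \leq X_0$ where $X_0$ depends on $B_0$), the condition $\omega(n) > B_0 \log \log X$ is either impossible or only permits $n$ in a finite set, and the bound $O(X \LL^{-b'})$ is then trivial.

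No step is a genuine obstacle. The only subtlety is the order of choices: the exponent $bc - 1$ worsens linearly in $c$ while the saving factor $c^{-K}$ is a power of $\LL$ whose exponent is linear in $B_0 \log c$, so $c$ must be chosen before $B_0$, and $B_0$ must be taken sufficiently large depending on $b$, $b'$, and the fixed $c$.
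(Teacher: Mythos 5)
Your argument is correct, but it takes a genuinely different route from the paper's. The paper first bounds the cardinality of the exceptional set $\mathcal{E}_{B_0}(X)$ of integers $n\leq X$ with $\omega(n) > B_0\log\log X$, via the divisor function: since $\tau(n)\geq 2^{\omega(n)}$ and $\sum_{n\leq X}\tau(n)\sim X\LL$, one gets $|\mathcal{E}_{B_0}(X)| \ll X\LL^{1-B_0\log 2}$; it then applies Cauchy--Schwarz against the second moment $\sum_{n\leq X}b^{2\omega(n)}$, which Lemma~\ref{sumbomega(n)} controls, and takes $B_0 = (b^2+2b')/\log 2$. You instead apply the Rankin moment trick directly, inserting $c^{\omega(n)-K}\geq 1$ for $\omega(n)>K$ and invoking Lemma~\ref{sumbomega(n)} once with weight $(bc)^{\omega(n)}$; with $K=B_0\log\log X$ this costs $\LL^{bc-1}$ and saves $\LL^{-B_0\log c}$, so any $B_0\geq (bc-1+b')/\log c$ works. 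Your route is shorter and avoids Cauchy--Schwarz; both give an explicit admissible $B_0$, which is all the lemma requires. Your closing remark handling small $X$ (where $\log\log X$ is small, negative, or undefined) by triviality is a point the paper leaves implicit, and is worth stating.
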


\begin{proof} 
Let $\mathcal E_{B_0} (X)$ be the the set of integers $n\leq X$ such that $\omega (n) >  B_0 \log \log X$. We trivially have
$$
\vert \mathcal E_{B_0} (X)\vert \cdot 2^{B_0 \log \log X} \leq \sum_{n\leq X} \tau (n) \sim X \LL,
$$
which gives the bound $\vert \mathcal E_{B_0} (X) \vert \ll X\, \LL^{1-B_0 \log 2}$. Now, by the Cauchy--Schwarz inequality and by the first bound given by Lemma \ref{sumbomega(n)}, we have the inequalities 
$$
\sum_{n \leq X\atop \omega (n) > B_0 \log \log X} b^{\omega (n)}\ll  \vert \mathcal E_{B_0} (X)\vert^{1/2} \Bigl( \sum_{n\leq X} b^{2\omega (n)}\Bigr)^{1/2}\ll 
X \LL^{b^2/2 - (B_0 \log 2)/2}, 
$$
which is $\ll X \LL^{-b'}$ with the choice $B_0 = (b^2 + 2b')/\log 2$.
\end{proof}

\subsubsection{A trivial bound for ${\rm Heis} ^*( X)$} 
We first consider the sum (see \eqref{expressionHeis})
\begin{equation*}
S^*(X,f, f') = \sum_d 2^{\omega (d)},
\end{equation*}
where the integer $d$ satisfies the conditions \eqref{summation1}. The last condition of \eqref{summation1} implies the inequality 
\begin{equation}
\label{214}
\Delta (f)  \, \Delta(f')^{2/3} \, (\Delta (f), \Delta (f'))^{-2/3} \leq X^{1/6},
\end{equation}
which also implies
\begin{equation}
\label{restrictionDD'}
\Delta (f) \leq X^{1/6} \text{ and }  \Delta (f') \leq X^{1/4}.
\end{equation} 
A direct application of the second part of Lemma \ref{sumbomega(n)} leads to the bound
\begin{equation}
\label{S3*<<}
S ^* (X, f, f') \ll X^{1/6} \Delta (f) ^{-1} \, \Delta (f')^{-2/3}\, \bigl(\Delta (f) , \Delta (f')\bigr)^{2/3}.
\end{equation}
Later, in this paper, we will give a more precise formula for this quantity (see Proposition \ref{complexanalysis*} below).

We insert the bound \eqref{S3*<<} into \eqref{expressionHeis}. However, given  $\Delta \in \N_{3}^*$, there are $2^{\omega (\Delta)}=2^{\vert\, {\rm supp\,} f\, \vert }$ functions $f \in V^*$ such that $\Delta (f) = \Delta$. These remarks and Lemma \ref{1leq1} lead to the bound 
\begin{equation}
\label{shortcut}
{\rm Heis}^* (X)
\ll  X^{1/6} \ 
\underset{ \Delta,\  \Delta'}{\sum \sum } \, 3^{\omega (\Delta \,\Delta')}\cdot 2^{\omega (\Delta)}\cdot 2^{\omega (\Delta')}  \, \Delta^{-1} \, \Delta'^{-2/3}\, (\Delta , \ \Delta')^{2/3},
\end{equation}
where $\Delta$ and $ \Delta'$ belong to $\N_{3}^*$ and satisfy \eqref{214}.

To study this sum, we put $\gamma = (\Delta, \Delta')$, $\Delta = \gamma \delta$ and $\Delta' = \gamma \delta'$ to write the inequality 
\begin{equation}
\label{234}
{\rm Heis}^* (X)
\ll  X^{1/6} \sum_{\gamma} 12^{\omega (\gamma)}\gamma^{-1}  \sum_{\delta} 6^{\omega (\delta)}\delta^{-1} \sum_{\delta'} 6^{\omega (\delta')} \delta'^{-2/3}.
\end{equation}
By a repeated application of Lemma \ref{sumbomega(n)}, by partial summations and by the crude inequalities \eqref{restrictionDD'}, we arrive at the inequality
\begin{equation}
\label{generalboundforHeis}
{\rm Heis}^* (X) \ll X^{1/4} \LL^5.
\end{equation}
This trivial bound just misses the expected order of magnitude of ${\rm Heis}^* (X)$  announced in Proposition \ref{archetype} by a power of $\LL$.

\subsubsection{Restriction on the size of $\Delta (f)$} 
Let $\Delta_0 >1$ be given. We denote by ${\rm Heis}^* (X; \Delta  >\Delta_0)$ the subsum of ${\rm Heis}^* (X)$ corresponding to the following restrictions of summations over $f$ and $f'$ (compare with the conditions in \eqref{expressionHeis})
\begin{equation}
\label{restriction}
\begin{cases}
f, \ f' \in V^*,\\
f,\, f' \text{ linearly  independent},\\
\Delta (f) > \Delta_0.
\end{cases}
\end{equation}
We will prove the following

\begin{proposition}
\label{largeDelta} 
There exists $A_0 > 0$ such that, uniformly for $X\geq 2$, one has the upper bound
$$
{\rm Heis}^* (X; \Delta > \LL^{A_0}) \ll X^{1/4}\LL^{-1}.
$$
\end{proposition}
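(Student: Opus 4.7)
The plan is to start from the trivial bound \eqref{S3*<<} on $S^*(X,f,f')$, change variables to $\gamma=(\Delta(f),\Delta(f'))$, $\Delta(f)=\gamma\delta$, $\Delta(f')=\gamma\delta'$, and then exploit the \emph{joint} constraint \eqref{214} (which tightens sharply once $\gamma\delta>\Delta_0$) in order to extract a genuine power of $\Delta_0^{-1}$ that absorbs the stray logarithmic factors generated by the counting.

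First I would substitute $S^*(X,f,f')\ll X^{1/6}\Delta(f)^{-1}\Delta(f')^{-2/3}(\Delta(f),\Delta(f'))^{2/3}$ together with $\mathbbm 1(f,f')\leq 1$ into the definition of ${\rm Heis}^*(X;\Delta>\Delta_0)$. Since $|V^*(\Delta)|=2^{\omega(\Delta)}$ and $3^{|\mathrm{supp}\,f\,\cup\,\mathrm{supp}\,f'|}=3^{\omega(\gamma)+\omega(\delta)+\omega(\delta')}$ after the change of variables, the multiplicative weights aggregate cleanly into $12^{\omega(\gamma)}6^{\omega(\delta)}6^{\omega(\delta')}$, the joint constraint \eqref{214} becomes $\delta'\leq X^{1/4}(\gamma\delta)^{-3/2}$, and the hypothesis becomes $\gamma\delta>\Delta_0$.

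Next I would perform the $\delta'$-sum first, applying Abel summation to the second estimate of Lemma \ref{sumbomega(n)} with $b=6$ to obtain $\sum_{\delta'\leq Y,\,\delta'\in\N_3^*}6^{\omega(\delta')}\delta'^{-2/3}\ll Y^{1/3}\LL^{2}$. With $Y=X^{1/4}(\gamma\delta)^{-3/2}$, this collapses the whole bound to $X^{1/4}\LL^{2}\sum_{\gamma\delta>\Delta_0}12^{\omega(\gamma)}6^{\omega(\delta)}/(\gamma\delta)^{3/2}$. The last step is the key trade-off: using $\gamma\delta>\Delta_0$, replace $(\gamma\delta)^{-3/2}$ by $\Delta_0^{-1/4}(\gamma\delta)^{-5/4}$, which turns the residual double sum into a pair of absolutely convergent Euler products over $\PP_3^*$ (the exponent $5/4$ is safely above $1$). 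Altogether one gets a total bound of order $X^{1/4}\LL^{2-A_0/4}$, which is $\ll X^{1/4}\LL^{-1}$ as soon as $A_0\geq 12$.

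The main obstacle — more a matter of careful bookkeeping than of deep difficulty — lies in the second step: one must use the joint bound on $\delta'$ rather than the crude bound $\Delta(f')\leq X^{1/4}$ used to obtain \eqref{generalboundforHeis}. The joint constraint is precisely what upgrades the effective exponent on $(\gamma\delta)$ to $3/2$, leaving room to sacrifice the quarter-power needed for the final trade-off; any looser version would leave divergent logarithmic sums in $(\gamma,\delta)$ that no polylogarithmic choice of $\Delta_0$ could overcome.
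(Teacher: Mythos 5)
Your proof is correct and follows essentially the same route as the paper's: start from the trivial bound \eqref{S3*<<}, change variables to $\gamma,\delta,\delta'$ exactly as in \eqref{234}, sum over $\delta'$ first to obtain $X^{1/4}\LL^{A_1}\sum_{\gamma\delta>\Delta_0}12^{\omega(\gamma)}6^{\omega(\delta)}(\gamma\delta)^{-3/2}$, and then save a power of $\Delta_0$. The only cosmetic difference is in the last step: the paper recombines $\gamma\delta=\Delta$ and applies partial summation to $\sum_{\Delta>\Delta_0}18^{\omega(\Delta)}\Delta^{-3/2}$, whereas you write $(\gamma\delta)^{-3/2}\leq\Delta_0^{-1/4}(\gamma\delta)^{-5/4}$ and appeal to a convergent Euler product; both land on the same $\Delta_0^{-c}\LL^{O(1)}$ saving, so either choice of $A_0$ large enough suffices.
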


\begin{proof} 
By a computation similar to \eqref{234}, one has the inequality
\begin{equation*}
{\rm Heis}^* (X ;\Delta >\Delta_0)
\ll  X^{1/6} \sum_{\gamma} 12^{\omega (\gamma)}\gamma^{-1}  \sum_{\delta} 6^{\omega (\delta)}\delta^{-1} \sum_{\delta'} 6^{\omega (\delta')} \delta'^{-2/3}, 
\end{equation*}
where the sum is  over the triples of positive integers $(\gamma, \delta, \delta')$ such that 
$$
\begin{cases}
\gamma\, \delta > \Delta_0,\\
\gamma\, \delta\,  \delta'^{2/3} \leq X^{1/6},
\end{cases}
$$
(see \eqref{214} for the last condition). Summing first over $\delta'$ we get, for some constant $A_1 >0$, the bound
\begin{align*}
{\rm Heis}^* (X; \Delta >\Delta_0)
& \ll  X^{1/4}  \LL^{A_1}\,  \sum_{\gamma} 12^{\omega (\gamma)}\gamma^{-3/2}  \sum_{\delta} 6^{\omega (\delta)}\delta^{-3/2},\\
& \ll X^{1/4}\LL^{A_1}\, \sum_{\Delta > \Delta_0} 18^{\omega (\Delta)}\,  \Delta^{-3/2}, 
 \end{align*}
since $\gamma \delta = \Delta$. If we choose $\Delta_0 = \LL^{A_0}$ for a sufficiently large value of $A_0$, Lemma \ref{sumbomega(n)} and partial summation show that the above expression is $\ll X^{1/4}\LL^{-1}$.
\end{proof}

\subsubsection{Restriction on the size of $\Delta (f')$} 
In this paragraph, we show that we can restrict ourselves to large values of
$\Delta (f')$ which means $\Delta (f') > X^{1/4}\LL^{-A_2}$. 

To be more precise, let $A_0$ be as in Proposition \ref{largeDelta}. For $\Delta'_0 > 1$ let 
$$
{\rm Heis^*} (X ; \Delta \leq \LL^{A_0}, \Delta' < \Delta'_0)
$$
be the subsum of ${\rm Heis}^* (X)$ corresponding to the restriction of summations (compare with \eqref{expressionHeis} and with \eqref{restriction})
\begin{equation}
\label{restriction10}
\begin{cases}
f, \ f' \in V^*,\\
f,\, f' \text{ linearly independent,}\\
\Delta (f) \leq  \LL^{A_0},\\
\Delta (f') < \Delta'_0.
\end{cases}
\end{equation}
We will prove

\begin{proposition}
\label{smallDelta'} 
Let $A_0$ be as in Proposition \ref{largeDelta}. There exists $A_2 >0$ such that, uniformly for $X\geq 2$, one has the upper bound 
$$
{\rm Heis}^* \bigl(X ;\Delta \leq \LL^{A_0}, \Delta' <X^{1/4}\LL^{-A_2}\bigr)  \ll X^{1/4}\LL^{-1}.
$$
\end{proposition}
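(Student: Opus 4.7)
The plan is to establish this bound via a direct refinement of the bookkeeping that produced the trivial estimate \eqref{generalboundforHeis}. I will insert the pointwise bound \eqref{S3*<<} for $S^*(X, f, f')$, group the pairs $(f, f')$ by their conductors $\Delta := \Delta(f)$ and $\Delta' := \Delta(f')$ (each conductor being attained by $2^{\omega(\cdot)}$ functions in $V^*$), and then perform the substitution $\gamma = (\Delta, \Delta')$, $\Delta = \gamma\delta$, $\Delta' = \gamma\delta'$ with $(\delta, \delta') = 1$. Exactly as in the computation preceding \eqref{234}, the combinatorial factors combine to give
\[
{\rm Heis}^*\bigl(X; \Delta \leq \LL^{A_0}, \Delta' < \Delta'_0\bigr) \ll X^{1/6} \sum_{\gamma} \frac{12^{\omega(\gamma)}}{\gamma} \sum_\delta \frac{6^{\omega(\delta)}}{\delta} \sum_{\delta'} \frac{6^{\omega(\delta')}}{\delta'^{2/3}},
\]
where the triples $(\gamma, \delta, \delta') \in (\N_3^*)^3$ are subject to $\gamma\delta \leq \LL^{A_0}$ and $\gamma\delta' < \Delta'_0$.

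I then execute the three sums, starting with the $\delta'$-sum where the restriction $\Delta'_0 = X^{1/4}\LL^{-A_2}$ is exploited. The second part of Lemma \ref{sumbomega(n)} (with $b = 6$) gives $\sum_{n \leq Y, n \in \N_3^*} 6^{\omega(n)} \ll Y\LL^2$, and partial summation therefore yields
\[
\sum_{\substack{\delta' \leq X^{1/4}\LL^{-A_2}/\gamma \\ \delta' \in \N_3^*}} \frac{6^{\omega(\delta')}}{\delta'^{2/3}} \ll \frac{X^{1/12}\LL^{2 - A_2/3}}{\gamma^{1/3}}.
\]
Substituting this in, the remaining double sum becomes
\[
X^{1/4}\LL^{2 - A_2/3} \sum_\gamma \frac{12^{\omega(\gamma)}}{\gamma^{4/3}} \sum_{\substack{\delta \leq \LL^{A_0}/\gamma \\ \delta \in \N_3^*}} \frac{6^{\omega(\delta)}}{\delta}.
\]
A further application of Lemma \ref{sumbomega(n)} combined with partial summation bounds the inner $\delta$-sum by $O(\LL^{A_4})$ for some absolute constant $A_4 > 0$, while the outer $\gamma$-sum is dominated by the convergent Euler product $\prod_{p \in \PP_3^*}\bigl(1 + 12\, p^{-4/3}\bigr) = O(1)$.

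Putting everything together produces
\[
{\rm Heis}^*\bigl(X; \Delta \leq \LL^{A_0}, \Delta' < X^{1/4}\LL^{-A_2}\bigr) \ll X^{1/4}\LL^{2 + A_4 - A_2/3},
\]
and choosing $A_2$ sufficiently large (e.g.\ $A_2 > 3(A_4 + 3)$) yields the desired $X^{1/4}\LL^{-1}$ bound. There is no genuine obstacle here: the argument is routine book-keeping, closely parallel to the proof of Proposition \ref{largeDelta}. The only subtlety is ensuring that the $\LL^{-A_2/3}$ saving extracted from the short $\delta'$-range dominates the polylogarithmic losses accumulated by the $\gamma$- and $\delta$-sums; this is precisely why the threshold $\Delta'_0$ must be taken as a \emph{negative} power of $\LL$ times $X^{1/4}$, rather than a pure power of $X$.
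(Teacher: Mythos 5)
Your proposal is correct and follows essentially the same route as the paper: insert the pointwise bound \eqref{S3*<<}, pass to conductors, perform the $\gamma = (\Delta, \Delta')$, $\Delta = \gamma\delta$, $\Delta' = \gamma\delta'$ substitution as in \eqref{234}, and sum over $\delta'$ first to extract the $\LL^{-A_2/3}$ saving from the shortened range before choosing $A_2$ large. The paper merely states this more tersely ("the proof mimics the proof of the crude bound \eqref{generalboundforHeis}"), while you supply the bookkeeping explicitly; the content is identical.
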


\begin{proof} 
The proof mimics the proof of the crude bound \eqref{generalboundforHeis}. It suffices to replace the conditions \eqref{restrictionDD'} by the two  present hypotheses: $\Delta (f) \leq \LL^{A_0}$ and $\Delta (f') < X^{1/4} \LL^{-A_2}$ and to choose
$A_2$ sufficiently large to replace the exponent $5$ by $-1$ on the right--hand side of \eqref{generalboundforHeis}.
\end{proof}
 
\subsubsection{Restriction on the number of prime factors of $\Delta (f')$} 
Thanks to Propositions \ref{largeDelta} and \ref{smallDelta'}, it remains to study  the contribution of the pairs $(f, f')\in V^*\times  V^*$, linearly independent, with $ \Delta (f) $ small (which means $\leq \LL^{A_0}$) and with $\Delta (f')$ of size almost maximal (which means between $X^{1/4} \LL^{-A_2}$ and $X^{1/4}$). We continue our preparation of the pairs $(f, f')$ by controlling the number of prime factors of $ \Delta (f')$. Let $A_0$ and $A_2$ be as in Propositions \ref{largeDelta} and \ref{smallDelta'}. Let $A_3 > 0$ to be fixed later. Let 
$$
{\rm Heis^*} \bigl(X ; 1<\Delta \leq   \LL^{A_0}, \Delta '\geq X^{1/4}\LL^{-A_2}, \omega (\Delta') \geq  A_3 \log \log X\, \bigr)
$$
be the subsum of 
${\rm Heis}^* (X)$ corresponding to the restriction of summations (compare with \eqref{expressionHeis} and \eqref{restriction10})
\begin{equation}
\label{lemon}
\begin{cases}
f, \ f' \in V^*,\\
1< \Delta (f) \leq  \LL^{A_0}, \\
\Delta (f') \geq X^{1/4} \LL^{-A_2},\\
\omega \bigl( \Delta (f')\bigr) \geq   A_3 \log \log X.
\end{cases}
\end{equation}

\begin{remark} 
The second and third condition of \eqref{lemon} imply that $f$ and $f'$ have distinct supports. So these functions are linearly independent, as soon as  $\Delta (f) > 1$.
\end{remark} 

We will prove

\begin{proposition}
\label{largeomega} 
Let $A_0$ and $A_2$ be as in Propositions \ref{largeDelta} and \ref{smallDelta'}. Then there exists $A_3$ such that, uniformly for $X \geq 2$, one has the upper bound 
$$
{\rm Heis}^* \bigl(\,X; 1< \Delta \leq \LL^{A_0}, \Delta' > X^{1/4} \LL^{-A_2}, \omega (\Delta ')\geq   A_3 \log \log X\, \bigr)  \ll X^{1/4}\LL^{-1}.
$$
\end{proposition}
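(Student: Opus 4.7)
My plan is to adapt the proof of the crude bound \eqref{generalboundforHeis}, but apply Lemma~\ref{manyprimefactors} to the variable $\Delta(f')$ to save an arbitrary power of $\LL^{-1}$. I insert the trivial estimate \eqref{S3*<<} for $S^*(X, f, f')$, observe that there are $2^{\omega(\Delta)}$ functions $f \in V^*$ with $\Delta(f) = \Delta$, and bound $\mathbbm{1}(f, f')$ by $1$. Setting $\gamma = (\Delta(f), \Delta(f'))$, $\Delta(f) = \gamma\delta$, and $\Delta(f') = \gamma\delta'$ as in \eqref{234}, the sum under consideration is bounded by
\begin{equation*}
X^{1/6}\sum_{\gamma, \delta, \delta'} 12^{\omega(\gamma)} 6^{\omega(\delta)} 6^{\omega(\delta')} \gamma^{-1}\delta^{-1}\delta'^{-2/3},
\end{equation*}
where $\gamma, \delta, \delta'$ are pairwise coprime in $\N_3^*$, with $\gamma\delta \leq \LL^{A_0}$, $\gamma\delta' \geq X^{1/4}\LL^{-A_2}$, and $\omega(\gamma) + \omega(\delta') \geq A_3 \log\log X$.

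The key observation is that $\omega(\gamma)$ cannot absorb the new restriction: since $\gamma$ is squarefree with $\gamma \leq \LL^{A_0}$, one has $\omega(\gamma) \leq A_0 \log\log(2X)/\log 2$, so for $A_3 > 2A_0/\log 2$ and $X$ sufficiently large the case $\omega(\gamma) \geq (A_3/2)\log\log X$ is vacuous. Hence I may restrict to $\omega(\delta') \geq (A_3/2)\log\log X$, concentrating the saving on the single variable $\delta'$.

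On the resulting range one has $X^{1/4}\LL^{-(A_0+A_2)} \leq \delta' \leq X^{1/4}$, so $\delta'^{-2/3} \ll X^{-1/6}\LL^{2(A_0+A_2)/3}$ and the inner $\delta'$-sum is majorised by
\begin{equation*}
X^{-1/6}\LL^{2(A_0+A_2)/3}\sum_{\substack{\delta' \leq X^{1/4} \\ \omega(\delta') \geq (A_3/2)\log\log X}} 6^{\omega(\delta')}.
\end{equation*}
Provided $A_3 \geq 2 B_0(6, b')$, the threshold $(A_3/2)\log\log X$ exceeds $B_0\log\log(X^{1/4})$, and Lemma~\ref{manyprimefactors} bounds the latter sum by $X^{1/4}\LL^{-b'}$ for an arbitrary prescribed $b'$. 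The outer sums over $\gamma, \delta \leq \LL^{A_0}$ contribute only $\LL^{C(A_0)}$ via Lemma~\ref{sumbomega(n)} and partial summation, so the total is $\ll X^{1/4}\LL^{C' - b'}$ for some $C' = C'(A_0, A_2)$. Taking $b' = C' + 1$ and then choosing $A_3$ correspondingly large yields the desired estimate. The only genuine bookkeeping obstacle is matching the threshold on $\omega(\delta')$ with the range $\delta' \leq X^{1/4}$ rather than $\delta' \leq X$; this is precisely why the factor $2$ appears in the lower bound on $A_3$.
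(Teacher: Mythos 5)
Your proof is correct and follows the same strategy as the paper: restrict to the range where $\delta'^{-2/3} \ll X^{-1/6}\LL^{O(1)}$, then apply Lemma~\ref{manyprimefactors} to the large variable. The only cosmetic difference is that the paper works directly from \eqref{shortcut} in the variables $(\Delta, \Delta')$ rather than from \eqref{234} in $(\gamma, \delta, \delta')$, so it never needs your extra observation that $\omega(\gamma)$ cannot absorb the restriction---it simply replaces $\omega(\Delta') \geq A_3\log\log X$ by the weaker threshold $\omega(\Delta') \geq (A_3/2)\log\log X^{1/4}$ for large $X$.
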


\begin{proof} 
We go back to the inequality \eqref{shortcut} to perform a trivial summation over $\Delta \leq \LL^{A_0}$. Hence, for some $A_4$, we have the inequality
\begin{multline*}
{\rm Heis}^* \bigl(\,X; 1< \Delta \leq \LL^{A_0}, \Delta' >X^{1/4}\, \LL^{-A_2}, \omega (\Delta ')\geq  A_3 \log \log X\, \bigr) \\
\ll X^{1/6} \LL^{A_4}\ \sum_{\Delta' < X^{1/4}\atop \omega (\Delta') \geq  (A_3/2)\log \log X^{1/4} } 6^{\omega (\Delta')} \Delta'^{-2/3}
\ll X^{1/4} \LL^{-1},
\end{multline*}
by Lemma \ref{manyprimefactors}, by a partial summation and by choosing $A_3$ sufficiently large.
\end{proof}

We have finished with the technical preparation of $\Delta (f)$ and $\Delta (f')$. So it is natural to define the subsum ${\rm Heis}^\dag (X)$ of ${\rm Heis}^* (X)$, defined in \eqref{expressionHeis}, by imposing the following additional restrictions of summation on $f$ and $f'$
\begin{equation}
\label{restriction2}
\begin{cases}
f, \ f' \in V^*,\\
1< \Delta (f)) \leq  \LL^{A_0},\\
\Delta (f')\geq X^{1/4} \, \LL^{-A_2},\\
\omega (\Delta (f')) \leq   A_3 \log \log X, \\
\Delta  (f)\, \Delta (f')^{2/3} \, (\Delta (f) , \Delta (f'))^{-2/3} \leq X^{1/6},
\end{cases}
\end{equation}
where $A_0$, $A_2$ and $A_3$ are defined in Propositions \ref{largeDelta},  \ref{smallDelta'} and \ref{largeomega}. Gathering Propositions  \ref{largeDelta}, \ref{smallDelta'} and \ref{largeomega}, we see that the proof of Proposition \ref{archetype} is reduced  to the proof of the  formula
\begin{equation}
\label{asympforHeisdag}
{\rm Heis}^\dag (X) = C_{{\rm Heis}^*}\, X^{1/4} + O \bigl(X^{1/4}\LL^{-1} \bigr), 
\end{equation}
where $C_{{\rm Heis}^*}$ is defined in  \eqref{defc*} and where the $O$--constant is uniform for $X \geq 1$.

\subsection{\texorpdfstring{Inverting summations in ${\rm Heis}^\dag (X)$}{Inverting summations in Heis(X)}} 
We now benefit from the control of the sizes of the variables appearing in ${\rm Heis}^\dag (X)$ which is a subsum of ${\rm Heis}^* (X)$. By the last line of \eqref{summation1} and by the second and third lines of \eqref{restriction2} we see that $d$ satisfies the inequalities
$$
1 \leq d \leq X^{1/6} \Delta (f)^{-1}\,   \bigl(\, X^{1/4} \LL^{-A_2}\, \bigr)^{-2/3} \Delta (f)^{2/3} \leq \LL^{2A_2/3 } = \LL^{A_4},
$$
by definition. This means that the variable $d$ is almost constant and it is wise to  perform the summation over this variable at the very end of the proof. We decompose ${\rm Heis}^\dag (X)$ as
\begin{equation}
\label{sumoverd}
{\rm Heis}^\dag (X)=
\sum_{d\in\N_3^*\atop d \leq \LL^{A_4}} 2^{\omega (d)}\, U (X,d)
\end{equation}
with
\begin{equation}
\label{defU(d,X)}
U(X,d) = 2^{-2} 3^{-3}\, \underset{ f, \ f' }{\sum \ \sum} \ 3^{ \vert\, {\rm supp}  f \, \cup \, {\rm supp} \, f'\vert  } \cdot {\mathbbm 1} (f, f'), 
\end{equation}
where the pair of functions $(f, f')$ satisfies \eqref{restriction2}, the inequality
\begin{equation}
\label{<X1/6/d}
\Delta (f) \, \Delta (f')^{2/3} \, \bigl( \Delta (f),\Delta (f')\bigr)^{-2/3}\leq X^{1/6} d^{-1},
\end{equation}
which is a consequence of \eqref{summation1}, and the coprimality condition
$$
\bigl(\,d, \Delta (f) \Delta (f')\, \bigr) = 1.
$$

\subsection{\texorpdfstring{Factorisation of the function $\mathbbm 1 (f, f')$}{Factorisation of the function 1(f, f')}} 
To facilitate the study of the function $\mathbbm 1 (f, f')$, we put
$$
\mathcal E := {\rm supp\, } f \text{ and } \mathcal E' := {\rm supp\, }f'.
$$
In a unique way, we decompose $\mathcal E$ and $\mathcal E'$ as a disjoint union
\begin{equation}
\label{decompE*}
\mathcal E = \mathcal E_0 \cup \mathcal E_1 \text{ and }   \mathcal E = \mathcal E_0 \cup \mathcal E'_1,
\end{equation}
where, furthermore $\mathcal E_1$ and $\mathcal E'_1$ are disjoint. This decomposition incites to write the functions $f$ and $f'$ as
\begin{equation}
\label{decompf*}
f= f_0 \oplus f_1 \text{ and } f'= f'_0 \oplus f'_1,
\end{equation}
where ${\rm supp} f_0 = {\rm supp} f'_0 = \mathcal E_0$, ${\rm supp}\, f_1= \mathcal E_1$ and ${\rm supp} \, f'_1 = \mathcal E'_1$. We define
\begin{equation}
\label{decompDelta*}
\Delta_0 :=   \Delta(f_0) =  \Delta  (f'_0)= \prod_{p\in \mathcal E_0} p,
\end{equation}
and we define $\Delta_1$ and $\Delta'_1$ analogously. The integers $\Delta_0$, $\Delta_1$ and $\Delta'_1$ belong to $\N_{3}^*$ and are coprime in pairs. The numbers  $\Delta = \Delta_0 \Delta_1$ and $\Delta' = \Delta_0 \Delta'_1$  also belong to $\N_3^*$. We now start rewriting $\mathbbm 1 (f, f')$ in terms of characters.

\begin{lemma}
\label{expand1} 
Let $f, f' \in V^*$. We adopt the notations \eqref{decompE*}, \eqref{decompf*} and \eqref{decompDelta*}. We then have the equalities
\begin{multline}
\label{671} 
\underset{(z,z')\in \F_3^2 \atop  f(r) z +f'(r) z' =0}{\sum \sum }
\bigl( \chi (zf+z'f') \bigr) (r) = \\
1+
 \begin{cases} \chi (f'_0+f'_1) (r) +\chi (2(f'_0 +f'_1)) (r)  &\textup{ if } r\in \mathcal E_1,\\
 \chi (f_0+f_1) (r) +\chi (2(f_0+f_1)) (r)  &\textup{ if } r\in \mathcal E'_1,\\
\chi \bigl(f'_0 (r) (f_0+f_1)+ 2f_0(r)(f'_0+ f'_1) \bigr)(r)& \\
\qquad \quad+ \chi \bigl(2f'_0 (r) (f_0+f_1) + f_0 (r)(f'_0 +   f'_1)\,\bigr) (r)  &\textup{ if } r \in \mathcal E_0.\\
 \end{cases}
\end{multline}
\end{lemma}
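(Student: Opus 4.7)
The plan is to reduce to a case analysis on the prime $r \mid \Delta(f) \Delta(f')$. By the decompositions in (\ref{decompE*}) and (\ref{decompf*}), the prime $r$ falls into exactly one of three disjoint classes: $r \in \mathcal{E}_1$ (meaning $f(r) \neq 0$, $f'(r) = 0$), the symmetric class $r \in \mathcal{E}'_1$, or $r \in \mathcal{E}_0$ (meaning both $f(r) \neq 0$ and $f'(r) \neq 0$). In each class the linear equation $f(r) z + f'(r) z' = 0$ in $\mathbb{F}_3^2$ has exactly three solutions that can be written down explicitly, and the corresponding contributions $\chi(zf + z'f')(r)$ match the right-hand side of (\ref{671}).

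First I would record the key simplification which makes these sums benign: the summation constraint forces $(zf + z'f')(r) = 0$, so in the product defining $\chi(zf + z'f')$ (see (\ref{defchi(f)}) and (\ref{convention})) the factor coming from the character $\chi_r$ is $\chi_r(r)^0 = 1$ by our convention $z^0 = 1$, rather than the vanishing value $\chi_r(r) = 0$. Consequently each of the three admissible pairs $(z,z')$ contributes a genuinely nonzero summand determined by the values of the other cubic characters $\chi_p(r)$ for $p \neq r$ in the support of $zf + z'f'$.

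Next I would run the case analysis. If $r \in \mathcal{E}_1$, the equation $f(r) z = 0$ forces $z = 0$ while $z'$ runs over $\mathbb{F}_3$, producing the three contributions $1$, $\chi(f')(r)$ and $\chi(2f')(r)$; substituting $f' = f'_0 \oplus f'_1$ gives the first line of (\ref{671}). The case $r \in \mathcal{E}'_1$ is entirely symmetric. If $r \in \mathcal{E}_0$, the solution set is the line $\{t \cdot (f'(r), -f(r)) : t \in \mathbb{F}_3\}$ in $\mathbb{F}_3^2$, so the three contributions are $1$ (for $t = 0$) and, using $-1 \equiv 2 \bmod 3$ together with the identities $f(r) = f_0(r)$ and $f'(r) = f'_0(r)$ (valid because $f_1$ vanishes on $\mathcal{E}_0$ and $f'_1$ vanishes on $\mathcal{E}_0$), the two terms
\[
\chi\bigl(f'_0(r)(f_0 + f_1) + 2 f_0(r)(f'_0 + f'_1)\bigr)(r), \qquad \chi\bigl(2 f'_0(r)(f_0 + f_1) + f_0(r)(f'_0 + f'_1)\bigr)(r),
\]
yielding the third line of (\ref{671}).

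The lemma is thus essentially a bookkeeping exercise. The only place where one needs to be careful is at the interface between the two conventions embedded in (\ref{chi(f)=0or}): if for some intermediate $zf + z'f'$ the value at $r$ were nonzero, then $\chi(zf + z'f')(r)$ would vanish, and if it is zero then the character is evaluated through the remaining primes in the support. The constraint in the sum is exactly what guarantees we are always in the second regime, which is the one obstacle worth flagging; once this is noted the three cases reduce to writing down solutions of a single linear equation in $\mathbb{F}_3^2$ and substituting them into (\ref{defchi(f)}).
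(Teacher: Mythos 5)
Your proposal is correct and takes the same route as the paper, whose proof consists of the single instruction ``Solve the equation $f(r)z + f'(r)z' = 0$ in each of the three cases.'' Your added remark about the convention $z^0 = 1$ in \eqref{convention} — that the constraint $(zf+z'f')(r)=0$ is exactly what prevents the factor $\chi_r(r)$ from killing the summand — is a useful clarification of a point the paper leaves implicit.
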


\begin{proof} 
Solve the equation $f(r)z + f'(r) z' = 0$ in each of the three cases.
\end{proof} 

\begin{remark} 
Recall that the value of the right--hand side of \eqref{671} is $0$ or $3$.
\end{remark}

\subsection{\texorpdfstring{Decomposition of $U(X, d)$}{Decomposition of U(X, d)}} 
\label{decompU(X,d)}
We incorporate the decompositions \eqref{decompE*}, \eqref{decompf*} and \eqref{decompDelta*} in \eqref{defU(d,X)}. Combining Lemma \ref{expand1} with the notation introduced in \eqref{defVV*}  and with Definition \ref{definition1} we arrive at the equality
\begin{multline}
\label{sumsumsumsumsumsumsum*}
U(X,d)=
2^{-2} 3^{-3}\  \underset {\Delta_0,\ \Delta_1,\ \Delta'_1}{ \sum \ \sum \ \sum }\quad 
 \underset{\substack{f _0, \, f'_0\in V^* (\Delta_0)\\  f_1\in
V^* (\Delta_1)\\  f'_1 \in V^* (\Delta'_1)}}{ \sum\ \sum \ \sum\ \sum} \\
\prod_{r\mid \Delta_0}\Bigl\{ 1+ \chi \bigl(f'_0 (r) (f_0+f_1)+ 2f_0(r)(f'_0+ f'_1) \bigr)(r) 
+ \chi \bigl(2f'_0 (r) (f_0+f_1) + f_0 (r)(f'_0 +   f'_1)\,\bigr) (r)\Bigr\}\\
\times \prod_{r\mid \Delta_1}\bigl\{ 1+\chi (f'_0+f'_1) (r) +\chi (2(f'_0 +f'_1)) (r) \bigr\}
\prod_{r \mid \Delta'_1} \bigl\{ 1 +   \chi (f_0+f_1) (r) +\chi (2(f_0+f_1)) (r)    \bigr\},
\end{multline}
where the conditions of summation \eqref{restriction2} and \eqref{<X1/6/d} become
\begin{equation}
\label{finalconditions*}
\begin{cases}
\Delta_0, \, \Delta_1, \, \Delta'_1 \in \N_3^*,\\
(\Delta_0, \, \Delta_1) =(\Delta_0, \Delta'_1) = (\Delta_1, \Delta'_1) = (d, \Delta_0 \Delta_1 \Delta'_1) =1,  \\
1< \Delta_0 \Delta_1 \leq \LL^{A_0},\\
\Delta_0 \Delta'_1 \geq X^{1/4} \LL^{-A_2},\\
\omega \bigl(\Delta_0 \Delta'_1\bigr)\leq A_3 \log \log X,\\
\Delta_0\, \Delta_1\, {\Delta'_1}^{2/3} \leq X^{1/6}/d.
\end{cases}
\end{equation} 
In a condensed way, we write \eqref{sumsumsumsumsumsumsum*} as
\begin{equation*}
U (X,d) = 2^{-2} 3^{-3} \sum_{\boldsymbol \Delta}\quad  \sum_{\boldsymbol f}\ \prod_{r\mid \Delta_0}\{\cdots \} 
\prod_{r\mid \Delta_1}\{\cdots \} \prod_{r\mid \Delta'_1}\{\cdots \},
\end{equation*}
and we decompose $U(X,d)$ as
\begin{equation}
\label{U=MT+ET}
U(X,d) ={\rm MT} (X,d) + {\rm ET} (X,d),
\end{equation}
where 
\begin{equation}
\label{defMTdef*}
{\rm MT} (X,d):= 2^{-2} 3^{-3} \sum_{\boldsymbol \Delta}\quad  \sum_{\boldsymbol f}\  \prod_{r\mid \Delta'_1}\{\cdots \} ,
\end{equation}
and
\begin{equation}
\label{defETdef*}
{\rm ET} (X,d) := 2^{-2} 3^{-3} \sum_{\boldsymbol \Delta} \quad \sum_{\boldsymbol f}\Bigl(-1+  \prod_{r\mid \Delta_0}\{\cdots \} 
\prod_{r\mid \Delta_1}\{\cdots \} \Bigr)\,\prod_{r\mid \Delta'_1}\{\cdots \}.
\end{equation}
To describe the scenery of these sums we insist on the fact that $\Delta_0$ and $\Delta_1$ are very small variables. In contrast, $\Delta'_1$ is a large variable, and since $\Delta'_1$ has few prime divisors (see the fifth  line of \eqref{finalconditions*}), its largest prime divisor, that we will denote by $p_\infty := p_\infty (\Delta'_1)$, is also large. When summing over $p_\infty$, we will obtain cancellation between cubic characters as a consequence of a theorem of Siegel--Walfisz type (see Lemma \ref{sumchi(pi)**}). We will obtain Proposition \ref{ETissmall*} below, which shows that ${\rm ET} (X,d)$ is an error term. In the other direction, the term ${\rm MT}(X,d)$,  roughly speaking, appears to be the product of $X^{1/4}$ by a convergent series for which we will search for a concise value, which will lead to the value of $C_{{\rm Heis}^*}$ given in \eqref{defc*}. 

\subsection{\texorpdfstring{Study of ${\rm ET}(X, d)$}{Study of ET(X, d)}} 
\label{StudyofET}
We factorize $\Delta'_1$ as
\begin{equation}
\label{Delta'1=*}
\begin{cases}
\Delta'_1 =\Delta''_1 \, p_\infty,\\
p \mid \Delta''_1 \Rightarrow p < p_\infty.
\end{cases}
\end{equation}
Correspondingly, there are two possible decompositions of the function $f'_1$
\begin{equation}
\label{f'1=*}
f'_1 := f''_1 \oplus \mathbbm{1}_{p_\infty} \text{ or } f'_1=f''_1 \oplus 2\cdot \mathbbm{1}_{p_\infty},
\end{equation}
where $\Delta(f''_1) = \Delta''_1$, and $\mathbbm{1}_{p_\infty}$ is the characteristic function of the set $\{p_\infty\}$. We also have
$$
\chi (f'_1) = \chi(f''_1) \, \chi_{p_\infty} \text{ or } \chi(f_1') = \chi (f''_1) \, \chi_{p_\infty}^2,
$$
according to the cases listed in \eqref{f'1=*}. We return to \eqref{defETdef*} to highlight the summation over $p_\infty$:
\begin{multline}
\label{noteasy*}
{\rm ET}(X, d) \ll \underset{\Delta_0, \Delta_1, \Delta''_1} {\sum\, \sum \, \sum}\,    
 3^{  \omega (\Delta''_1)}\
  \underset{\substack{f _0, \, f'_0\in V^* (\Delta_0)\\  f_1\in
V^* (\Delta_1)\\  f''_1 \in V^* (\Delta''_1)}}{ \sum\ \sum \ \sum\ \sum} \\
\Bigl\vert
\sum_{p_\infty}   
\Bigl( -1 + \prod_{r\mid \Delta_0} \{\cdots\} \prod_{r\mid \Delta_1}\{\cdots\}\Bigr)\bigl( 1 +
\chi (f_0+f_1) (p_\infty) + \chi (2(f_0+f_1))(p_\infty)\bigr)\, 
\Bigr\vert\\ 
+ \textup{similar term},
\end{multline}
where, in the second line of \eqref{noteasy*}, we have chosen, for $f'_1$, the first   decomposition written in \eqref{f'1=*}. The {\it similar term} corresponds to the second decomposition in \eqref{f'1=*}. In \eqref{noteasy*}, the conditions of summation are deduced from \eqref{finalconditions*} by applying the decomposition  \eqref{Delta'1=*}.
 
We develop the product on the second line of \eqref{noteasy*} to bring out 
\begin{equation}
\label{LLA5*}
3(3^{\omega (\Delta_0 \Delta_1)}-1) \ (=O (\LL^{A_5}))
\end{equation}
products of cubic characters. This means that the sum over $p_\infty$ appearing in \eqref{noteasy*} is the sum of $O(\LL^{A_5})$ sums of the form
\begin{multline*}
A  (\boldsymbol \eta, \, \boldsymbol \zeta, \boldsymbol \epsilon, f_0, f'_0,f_1, f'_1)= 
\sum_{p_\infty}   
\prod_{r\mid \Delta_0}\Bigl\{\bigl[ \chi \bigl(f'_0 (r) (f_0+f_1)+ 2f_0(r)(f'_0+ f'_1) \bigr)(r) \bigr]^{\eta_{1r}}\\
\times \bigl[\,\chi \bigl(2f'_0 (r) (f_0+f_1) + f_0 (r)(f'_0 +   f'_1)\,\bigr) (r)\, \bigr]^{\eta_{2r}}\Bigr\}\\
\prod_{r\mid \Delta_1}\bigl\{\, \bigl[\chi (f'_0+f'_1) (r)\bigr]^{\zeta_{1r}}\cdot \bigl[\, \chi (2(f'_0 +f'_1)) (r)\, \bigr] ^{\zeta_{2r}} \bigr\}
 \\ \cdot 
\bigl[\,\chi(f_0+f_1)(p_\infty) \, \bigr]^{\epsilon_1}\cdot \bigl[\,\chi (2 (f_0+f_1))(p_\infty)\, \bigr]^{\epsilon_2}
\end{multline*}
where the exponents are non-negative integers and satisfy the inequalities
\begin{equation}
\label{conditionsofexponents*}
\begin{cases}
0 \leq \eta_{1r} + \eta_{2r} \leq 1\text{ for each } r \mid \Delta_0,\\
0\leq \zeta_{1r} + \zeta_{2r} \leq 1 \text{ for each } r \mid \Delta_1,\\
0\leq  \epsilon_1 +\epsilon_2 \leq 1,\\
\sum_{r \mid \Delta_0}(\eta_{1r} + \eta_{2r}) + \sum_{r \mid \Delta_1} ( \zeta_{1r} + \zeta_{r})  \geq 1.
\end{cases}
\end{equation}

We return to the definition of $\chi (f)$ given in \eqref{defchi(f)} and recall the equalities $f'_1 (p_\infty) = 1$ and $f_0 (p_\infty) = f'_0 (p_\infty) = f_1 (p_\infty) = 0$. Keeping only the terms depending on $p_\infty$, we get an equality
\begin{equation}
\label{A=A*}
\vert A  (\boldsymbol \eta, \, \boldsymbol \zeta, \boldsymbol \epsilon, f_0, f'_0,f_1, f'_1) \vert 
= \vert \widetilde A (\boldsymbol \eta, \, \boldsymbol \zeta, \boldsymbol \epsilon, f_0, f_1) \vert
\end{equation}
between moduli, where
\begin{align}
\widetilde A  (\boldsymbol \eta, \, \boldsymbol \zeta, \boldsymbol \epsilon, f_0, f_1)&= \sum_{p_\infty}   
\bigl[\, \chi (f_0+f_1) (p_\infty)\, \bigr]^{\epsilon_1 +2 \epsilon_2}\nonumber\\
&\qquad \times\prod_{r \mid \Delta_0}  \bigl[\, \chi_{p_\infty} (r) \bigr]^{f_0(r)(2\,\eta_{1r}+\eta_{2r} )}
\prod_{r \mid \Delta_1} \bigl[\, \chi_{p_\infty} (r)\, \bigr]^{\zeta_{1r} +2 \,\zeta_{2r}}\label{f0(r)=1=2}\\
&= \sum_{p_\infty}    \widetilde M (p_\infty),\label{deftildeM*}
\end{align}
by definition. Of course $p_\infty$ satisfies the conditions of summation deduced from \eqref{finalconditions*} by applying the factorization \eqref{Delta'1=*}. The exponent $f_0(r)$ appearing in \eqref{f0(r)=1=2} can take the value $1$ or $2 \bmod 3$. If its value is $2$, we have the equality 
$$
f_0 (r) (2\, \eta_{1r} + \eta_{2r}) = 2\, \eta_{2r} + \eta_{1r}.
$$
So in that case, we  can invert the roles of  $\eta_{1r}$ and $\eta_{2r}$ without affecting the conditions \eqref{conditionsofexponents*}. So we can always suppose that $f_0(r)=1$ in the definition of $\widetilde M (p_\infty)$. We also replace $f_0\oplus f_1$ by $f$ (see \eqref{decompf*}) and $\Delta_0  \Delta_1$ by $\Delta$ (see  \eqref{decompDelta*}). So $\widetilde M (p_\infty)$ equals
\begin{equation}
\label{concisedefinition*}
\widetilde M (p_\infty) = \bigl[\, \chi (f) (p_\infty)\, \bigr]^{\epsilon_1 +2 \epsilon_2}\, 
\prod_{r \mid \Delta} \bigl[\, \chi_{p_\infty} (r)\, \bigr]^{e_{1r} +2 e_{2r}},
\end{equation}
where $p_\infty \in \PP_{3}^*$ does not divide $\Delta$ and where the non-negative exponents $\epsilon_i$ and $e_{ir}$ satisfy
\begin{equation}
\label{812*}
\begin{cases}
0 \leq e_{1r}+e_{2r} \leq 1, \text{ for all } r\mid \Delta,\\
0 \leq \epsilon_1+ \epsilon_2 \leq 1\\
\sum_{r \mid \Delta}(e_{1r} + e_{2r})    \geq 1.
\end{cases}
\end{equation}
To obtain the desired cancellation when summing over $p_\infty$, we will show that $\widetilde{M}$ is a character of $\Z [j]$. As a first step we use the following
\begin{lemma}
\label{reciprocityapplied*} 
For every distinct primes  $p$ and $r$ in $\PP_3^*$, decomposed in the standard way:
$p =\pi \cdot \overline \pi$ and $r= \rho \cdot \overline \rho$, we have the equality
$$
\chi_p (r) = \overline{\chi_r (p)} \, \Bigl( \frac \pi \rho\Bigr)_3^2.
$$
\end{lemma}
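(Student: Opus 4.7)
The plan is to derive the identity by a direct calculation using cubic reciprocity in $\Z[j]$, exploiting multiplicativity of the cubic residue symbol and complex conjugation.

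First I would unfold both sides via the definition \eqref{conventiononcharacter} and the factorizations $p = \pi\overline\pi$, $r = \rho\overline\rho$. Multiplicativity of the cubic residue symbol in the top argument gives
\[
\chi_p(r) = \Bigl(\frac{\rho}{\pi}\Bigr)_3 \Bigl(\frac{\overline\rho}{\pi}\Bigr)_3, \qquad \overline{\chi_r(p)} = \overline{\Bigl(\frac{\pi}{\rho}\Bigr)_3}\,\overline{\Bigl(\frac{\overline\pi}{\rho}\Bigr)_3}.
\]
The key observation is that if $\rho$ is primary then so is $\overline\rho$, since $\rho \equiv 2 \bmod 3$ in $\Z[j]$ is preserved by conjugation ($2 \in \Z$ is fixed). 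Hence both $\pi,\rho$ and $\pi,\overline\rho$ are pairs of distinct primary primes of $\Z[j]$ and cubic reciprocity (as in \cite[Chap.~9 \S 3]{Ir-Ro}) applies to both pairs, yielding
\[
\Bigl(\frac{\rho}{\pi}\Bigr)_3 = \Bigl(\frac{\pi}{\rho}\Bigr)_3, \qquad \Bigl(\frac{\overline\rho}{\pi}\Bigr)_3 = \Bigl(\frac{\pi}{\overline\rho}\Bigr)_3.
\]

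Next I would use the conjugation compatibility of the cubic residue symbol, namely $\bigl(\tfrac{\alpha}{\beta}\bigr)_3^{\,\overline{\phantom{n}}} = \bigl(\tfrac{\overline\alpha}{\overline\beta}\bigr)_3$ for coprime $\alpha,\beta$, to rewrite $\bigl(\tfrac{\pi}{\overline\rho}\bigr)_3 = \overline{\bigl(\tfrac{\overline\pi}{\rho}\bigr)_3}$. Substituting into the expression for $\chi_p(r)$ yields
\[
\chi_p(r) = \Bigl(\frac{\pi}{\rho}\Bigr)_3 \cdot \overline{\Bigl(\frac{\overline\pi}{\rho}\Bigr)_3}.
\]

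Finally I would compare with $\overline{\chi_r(p)}\cdot\bigl(\tfrac{\pi}{\rho}\bigr)_3^{2}$. Since the value of $\bigl(\tfrac{\pi}{\rho}\bigr)_3$ is a cube root of unity, complex conjugation coincides with squaring, so $\overline{\bigl(\tfrac{\pi}{\rho}\bigr)_3} \cdot \bigl(\tfrac{\pi}{\rho}\bigr)_3^{2} = \bigl(\tfrac{\pi}{\rho}\bigr)_3^{\,4} = \bigl(\tfrac{\pi}{\rho}\bigr)_3$, and the extra factor of $\overline{\bigl(\tfrac{\overline\pi}{\rho}\bigr)_3}$ matches the one above. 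This gives the claimed equality. There is no real obstacle here beyond bookkeeping: the statement is essentially a packaging of cubic reciprocity together with the symmetry of the standard decomposition under complex conjugation, and it is precisely the form we want in order to separate the dependence on $p_\infty$ from that of the fixed small primes $r \mid \Delta$ in the sum $\widetilde A$ of \eqref{deftildeM*}.
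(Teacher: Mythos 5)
Your proof is correct and uses essentially the same ingredients as the paper's (which just says ``combine multiplicativity, cubic reciprocity $(\pi/\rho)_3 = (\rho/\pi)_3$, and the conjugation property $\overline{(\pi/\rho)_3} = (\overline\pi/\overline\rho)_3$''); you simply spell out the bookkeeping that the paper leaves implicit, including the useful observations that $\overline\rho$ remains primary and that $\overline{(\pi/\rho)_3} = (\pi/\rho)_3^2$ for a cube root of unity.
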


\begin{proof} 
Combine the multiplicative properties of the cubic character, the cubic reciprocity law $(\pi /\rho)_3 = (\rho/\pi)_3$ (see \cite[Theorem1 p. 114]{Ir-Ro}, for instance) and the conjugation property $\overline{(\pi/\rho)_3} = (\overline \pi/ \overline \rho)_3$.
\end{proof}

We use Lemma \ref{reciprocityapplied*} to write the equality
$$
\chi_{p_\infty} (r) =\overline{ \chi_r (p_\infty)} \,\Bigl( \displaystyle{\frac {\pi_\infty} \rho }\Bigr)^2_3,
$$ 
where we decomposed in a standard way $p_\infty = \pi_\infty \cdot \overline{ \pi_\infty}$ and $r = \rho \cdot \overline{\rho}.$ 
 
Let $f\in V^*$, let $\boldsymbol \epsilon$ be a pair $(\epsilon_1, \epsilon_2)$ of positive integers and let $\boldsymbol e= (e_{1r}, e_{2r})_{r\in {\rm supp} \, f}$ be a $2\cdot \vert\,  {\rm supp}\, f\,\vert $--tuple of positive integers. Let $r\in \PP_3^*$ decomposed in the standard way $r =\rho \cdot \overline \rho$. For $z \in \Z [j]$, we define 
$$
M(z) = M(z, f, \boldsymbol \epsilon, \boldsymbol e):=
\bigl[ \, \chi(f) (z\overline z)\, \bigr]^{\epsilon_{1}} \ \bigl[\,\chi (2f)(z\overline z)\,\bigr]^{\epsilon_2}\ 
\prod_{r \in\,  {\rm supp}\, f} \Bigl[\,   \chi_r (z\overline z) \,\Bigl( \displaystyle{\frac z \rho }\Bigr)_3\, \Bigl]^{2e_{1r} +e_{2r}}.
$$

\begin{lemma} 
\label{nontrivial**}
Let $f\in V^*$ with a non-empty support. Let $p =\pi \cdot \overline \pi$ be the standard decomposition of a prime $p$ belonging to $\mathbb P_3^*$ but not to ${\rm supp} \, f $. We then have the equality
\begin{equation}
\label{tildeM=M(pi)*}
\widetilde {M} (p) = M (\pi).
\end{equation}
Suppose furthermore that the following conditions are satisfied:
\begin{equation*}
\begin{cases} 
0 \leq \epsilon_1 +\epsilon_2\leq 1,\\
0 \leq e_{1,r} + e_{2,r} \leq 1, \textup{ for each } r \in {\rm supp}\, f, \\
\sum_{r \in {\rm supp}\, f} (e_{1r} +e_{2r}) \geq 1.
\end{cases} 
\end{equation*}
Then the application $z\mapsto M(z)$ is a non-trivial multiplicative character over $\Z [j]$, with period dividing $\prod_{r\in {\rm supp}\, f} \rho$.
\end{lemma}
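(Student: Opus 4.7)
The lemma has two components: the equality $\widetilde M(p) = M(\pi)$, and the character properties of $M$.

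For the equality, the plan is a direct term-by-term identification. Starting from \eqref{concisedefinition*}, I would substitute $\chi_{p_\infty}(r) = \overline{\chi_r(p_\infty)}(\pi_\infty/\rho)_3^2$ from Lemma \ref{reciprocityapplied*}. Since $\chi_r$ is cube-of-unity valued, $\overline{\chi_r(p_\infty)} = \chi_r(p_\infty)^2$, and using $4 \equiv 1 \pmod 3$ the exponents collapse neatly. Writing $\chi_r(p_\infty) = (p_\infty/\rho)_3 = (\pi_\infty \bar\pi_\infty/\rho)_3$ and $\chi(2f) = \chi(f)^2$, one regroups the factors into exactly $M(\pi_\infty)$.

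Multiplicativity of $z \mapsto M(z)$ is then immediate: the norm $z \mapsto z\bar z$ is multiplicative from $\Z[j]$ to $\Z$, so each Dirichlet-type factor $\chi_r(z\bar z), \chi(f)(z\bar z), \chi(2f)(z\bar z)$ is multiplicative in $z$; and $(z/\rho)_3$ is the cubic residue symbol, multiplicative by definition. The period bound follows since each building block localizes to the appropriate prime above $r \in {\rm supp}\, f$.

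The heart of the argument is non-triviality. By the hypothesis $\sum_r (e_{1r}+e_{2r}) \geq 1$, I can fix $r_0 \in {\rm supp}\, f$ with $(e_{1r_0},e_{2r_0}) \in \{(1,0),(0,1)\}$. By the Chinese Remainder Theorem it suffices to test $M$ on elements $z$ with $z \equiv 1 \pmod{r'}$ for every $r' \in {\rm supp}\, f$ with $r' \neq r_0$; for such $z$ all other factors trivialize, and using the decomposition $\chi_{r_0}(z\bar z) = (z/\rho_0)_3 (\bar z/\rho_0)_3$ one finds
\[
M(z) = \left(\tfrac{z}{\rho_0}\right)_3^{A} \left(\tfrac{\bar z}{\rho_0}\right)_3^{B},
\]
where a short modular computation gives $A \equiv f(r_0)(\epsilon_1+2\epsilon_2)+e_{1r_0}+2e_{2r_0}$ and $B \equiv f(r_0)(\epsilon_1+2\epsilon_2)+2e_{1r_0}+e_{2r_0} \pmod 3$. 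Since $(z/\rho_0)_3$ and $(\bar z/\rho_0)_3 = \overline{(z/\bar\rho_0)_3}$ are independent non-trivial characters of the two factors of $(\Z[j]/r_0)^* \cong \F_p^* \times \F_p^*$, the combined character is non-trivial iff $(A,B) \not\equiv (0,0) \pmod 3$. But $A - B \equiv e_{2r_0}-e_{1r_0} \equiv \pm 1 \pmod 3$, so $A \neq B$ in $\F_3$ and therefore $(A,B) \not\equiv (0,0)$, as required.

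The main obstacle is this last step's bookkeeping: one must carefully track the three distinct sources of cubic-character contributions — the $\chi(f)(z\bar z)^{\epsilon_1+2\epsilon_2}$ global factor, the $\chi_{r_0}(z\bar z)$ local factor, and the explicit $(z/\rho_0)_3^{2e_{1r_0}+e_{2r_0}}$ factor — and see that their combined exponents of $(z/\rho_0)_3$ and $(\bar z/\rho_0)_3$ differ precisely by $e_{2r_0}-e_{1r_0}$, which is non-zero exactly because $e_{1r_0}+e_{2r_0}=1$ forces these two exponents to be distinct elements of $\{0,1\}$.
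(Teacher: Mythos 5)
Your proof is correct, and on the non-triviality step it takes a slightly different (and arguably cleaner) route than the paper. For the identity $\widetilde M(p)=M(\pi)$, your calculation — reciprocity via Lemma~\ref{reciprocityapplied*}, then $\overline{\chi_r(p)}=\chi_r(p)^2$ and $\chi(2f)=\chi(f)^2$, then reducing exponents mod $3$ using $4\equiv 1$ — is exactly what is needed, and the paper gives no more detail than ``it is a consequence of the construction of $M$ and Lemma~\ref{reciprocityapplied*}.'' For non-triviality, both arguments begin by localizing at each $r\in\mathrm{supp}\,f$ (coprime periods / CRT). The paper then works in the basis $\{\chi_r(z\bar z),\ (z/\rho)_3\}$ at \emph{every} $r$: assuming $M$ trivial, linear independence of these two characters forces the exponents, and hence $e_{1r}=e_{2r}=\epsilon_1=\epsilon_2=0$ at each $r$, contradicting $\sum_r(e_{1r}+e_{2r})\geq 1$. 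You instead fix one $r_0$ with $e_{1r_0}+e_{2r_0}=1$ and diagonalize in the basis $\{(z/\rho_0)_3,\ (\bar z/\rho_0)_3\}$, computing $A\equiv f(r_0)(\epsilon_1+2\epsilon_2)+e_{1r_0}+2e_{2r_0}$ and $B\equiv f(r_0)(\epsilon_1+2\epsilon_2)+2e_{1r_0}+e_{2r_0}$, whence $A-B\equiv e_{2r_0}-e_{1r_0}\equiv\pm1\not\equiv0$. This is a correct and self-contained variant: it isolates precisely why the constraint $e_{1r_0}+e_{2r_0}=1$ is the source of non-triviality, without needing to deduce anything about $\epsilon_1,\epsilon_2$ along the way, whereas the paper's version reaches the contradiction by forcing all parameters to vanish.
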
 
 
\begin{proof} 
The equality \eqref{tildeM=M(pi)*} is a consequence of the construction of the function $M$ and Lemma \ref{reciprocityapplied*}.

For the second part, it is clear that $z \mapsto M(z)$ is a multiplicative character over $\Z[j]$, and it is also clear that its period divides $\prod_{r\in {\rm supp}\, f} \rho$. It remains to show that it is a non-trivial character.

Suppose that $M(z)$ is the trivial character. Note that $M(z)$ is a product
\[
\bigl[ \,  \prod_{r \in\,  {\rm supp}\, f} \chi_r^{f(r)}(z\overline z)\, \bigr]^{\epsilon_{1}} \ \bigl[\, \prod_{r \in\,  {\rm supp}\, f} \chi_r^{2f(r)} (z\overline z)\,\bigr]^{\epsilon_2}\ 
 \prod_{r \in\,  {\rm supp}\, f} \Bigl[\,   \chi_r (z\overline z) \,\Bigl( \displaystyle{\frac z \rho }\Bigr)\, \Bigl]^{2e_{1r} +e_{2r}},
\]
where all the factors have coprime period. Hence $M(z)$ trivial implies that
\[
\chi_r^{\epsilon_{1} f(r)}(z \overline{z}) \chi_r^{2\epsilon_{2} f(r)}(z \overline{z}) \Bigl[\,\chi_r (z\overline z) \left(\displaystyle{\frac z \rho }\right)\,\Bigr]\,^{2e_{1r} +e_{2r}}
\]
is the trivial character for every $r$ in the support of $f$. Now recall the inequalities
\[
0 \leq  \epsilon_1 +\epsilon_2\leq 1, \quad 0 \leq e_{1r} + e_{2r} \leq 1.
\]
But $\chi_r(z \overline{z})$ and $\left(\displaystyle{\frac z \rho }\right)$ are linearly independent characters. This forces
\[
\epsilon_1 = \epsilon_2 = e_{1r} = e_{2r} = 0,
\]
contrary to our third assumption.
\end{proof}  

\subsubsection{A Siegel--Walfisz type Theorem for standard primes}
The famous Siegel--Walfisz Theorem for rational primes gives equidistribution of primes $p\leq X$ in arithmetic progressions $a+kq$ (with $(a,q)=1$) uniformly for the modulus $q \leq \LL^A$ for any arbitrary given $A$. Such a phenomenom of equidistribution also holds for prime ideals in number fields since the associated $L$--functions have properties similar to those of Dirichlet $L$--functions. On that subject, among other references, an interesting general one is \cite[Main Theorem p.35]{Mi}, which was used in \cite[Lemma 32 \& Prop.7]{Fo--Kl4} in the context of {\it privileged primes} of the  ring  $\Z[i]$, the ring of Gaussian integers. The methods presented in \cite{Fo--Kl4} are easily translated in the context of $\Z [j]$ which is the theatre of our paper. We introduce the following notations: 

\noindent Let $a$ and $w$ be two  elements of $\Z[j]$ such that $w$ is coprime with $3a$. For $x\geq 2$, let 
$$
\pi_{\Z [j]} (x;w,a):= \bigl \vert
\{ \pi \in \Z [j]: \pi \text{ is a standard prime}, \, {\rm N} (\pi )\leq x, \, \pi \equiv a \bmod w\}
\bigr\vert,
$$
and let $\phi (w)$ be the number of invertible classes in $\Z [j]/ (w \Z [j])$. We then have 

\begin{proposition}
\label{SW*}
For every $A > 0$, there exists $c(A) >0$ such that, uniformly for
$$
x\geq 2,\, a,\, w \in \Z [j],\,  (w,3a) =1, \,  {\rm N}(w)\leq (\log x)^A, 
$$
one has the equality
$$
\pi_{\Z [j]} (x;w,a)=\frac{1}{\phi (w)} \pi_{\Z [j]} (x; 1, 0) + O\bigl(\, x \exp( -c(A)\, \sqrt{ \log x}\,)\, \bigr).
$$
\end{proposition}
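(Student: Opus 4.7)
The plan is to proceed in parallel to the proof of the corresponding Siegel--Walfisz statement for $\Z[i]$ given in \cite{Fo--Kl4}, translating all arguments from the Gaussian integers to the Eisenstein integers $\Z[j]$. The first step is to package the counting function $\pi_{\Z[j]}(x;w,a)$ using Hecke characters of the number field $\Q(j)$. The condition that $\pi$ is a standard prime (primary, with $\mathrm{Im}\,\pi>0$) picks one representative out of the six associates of the generator of the prime ideal $(\pi)$, so together with the congruence $\pi\equiv a\bmod w$ it can be rewritten as a congruence condition for a generator of $(\pi)$ modulo the modulus $\mathfrak{m}=(6w)$. Since $\Z[j]$ has class number one, this condition is then detected by the orthogonality relations of the characters of the ray class group $\Cl_\mathfrak{m}$ of $\Q(j)$.

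Once this has been done, one writes
$$
\pi_{\Z[j]}(x;w,a)
 = \frac{1}{|\Cl_\mathfrak{m}|}\sum_{\chi} \overline{\chi(a)}\,\psi(x,\chi)\;+\;O(1),
$$
where $\chi$ runs over the characters of $\Cl_\mathfrak{m}$ and $\psi(x,\chi)=\sum_{\mathrm{N}\mathfrak{p}\leq x}\chi(\mathfrak{p})$. Each such $\chi$ gives a Hecke $L$--function $L(s,\chi)$ that enjoys analytic continuation, a functional equation, and an Euler product of the standard shape. Applying Perron's formula and shifting the contour of integration past the line $\mathrm{Re}\,s=1$, one exploits the classical Korobov--Vinogradov type zero--free region for Hecke $L$--functions of $\Q(j)$, which gives a region of the form $\sigma>1-c/(\log\mathrm{N}\mathfrak{m}(2+|t|))^{2/3}(\log\log\ldots)^{1/3}$ with an absolute constant $c>0$. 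This converts, for the principal character, into the main term $\pi_{\Z[j]}(x;1,0)/\phi(w)$ together with an error term of size $x\exp(-c_1\sqrt{\log x})$, provided that none of the non-principal $L(s,\chi)$ has an exceptional real zero in the critical region.

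The potential Siegel zero of a non-principal real character modulo $\mathfrak{m}$ is dealt with using Siegel's theorem in its Hecke--$L$--function version: any such zero $\beta$ satisfies $\beta\leq 1-c(A)\mathrm{N}(\mathfrak{m})^{-\varepsilon}$ with an ineffective constant $c(A)$. Since $\mathrm{N}(w)\leq(\log x)^A$, this is enough to absorb the exceptional zero contribution into an error term of the shape $x\exp(-c(A)\sqrt{\log x})$, at the price of the ineffectiveness of $c(A)$. The conditions $(w,3a)=1$ guarantee that $a$ is coprime to $\mathfrak{m}$ and that no ramification at $3$ interferes with the character sum decomposition.

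The main obstacle is the careful bookkeeping involved in the first step: encoding ``standard prime with $\pi\equiv a\bmod w$'' as a clean ray class congruence modulo $(6w)$, and verifying that, after division by $|\Cl_\mathfrak{m}|$, the principal character contribution is precisely $\phi(w)^{-1}\pi_{\Z[j]}(x;1,0)$ rather than something involving the full modulus. Since both the left-hand side and the term $\pi_{\Z[j]}(x;1,0)$ in the statement incorporate the standardness condition in exactly the same manner, the bookkeeping of unit orbits cancels out of the ratio, and one recovers the stated formula. All the analytic ingredients — the functional equation, the standard density estimates, and Siegel's theorem — carry over verbatim from the $\Z[i]$ situation treated in \cite{Fo--Kl4}.
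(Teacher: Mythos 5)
Your analytic skeleton (Perron's formula, zero-free region, Siegel's theorem to handle a possible exceptional zero against the constraint ${\rm N}(w)\leq(\log x)^A$) is the right one, and at that level you and the paper are both pointing at the same circle of ideas (Mitsui's generalized prime number theorem, as adapted in Fouvry--Kl\"uners for $\Z[i]$). But there is a genuine gap at the start, in the translation of the ``standard prime'' condition into characters.

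You assert that the two conditions ``$\pi$ primary'' and ``${\rm Im}\,\pi>0$'' together ``pick one representative out of the six associates of the generator of the prime ideal $(\pi)$'', and that the whole standardness condition therefore becomes a congruence modulo $(6w)$ detectable by characters of the ray class group $\Cl_{\mathfrak m}$. This is not correct. The six units of $\Z[j]$ map bijectively onto $(\Z[j]/3)^\ast$, so the primary condition $\pi\equiv 2\bmod 3$ \emph{alone} already singles out a unique element in each unit orbit; no factor of $2$ in the modulus is needed or useful. The second condition ${\rm Im}\,\pi>0$ is not a statement about associates at all: for a split prime $p=\pi\overline{\pi}$, the primary generators $\pi$ and $\overline{\pi}$ generate the two \emph{distinct} conjugate prime ideals $(\pi)$ and $(\overline{\pi})$, and ${\rm Im}\,\pi>0$ is what decides which of those two ideals the standard prime belongs to. Since $\Q(j)$ is imaginary quadratic, its unique archimedean place is complex and contributes nothing to any ray class group, so no modulus $\mathfrak m$ and no character of $\Cl_{\mathfrak m}$ can see the sign of ${\rm Im}\,\pi$. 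Concretely, the orthogonality identity you write down,
$$
\frac{1}{|\Cl_{\mathfrak m}|}\sum_{\chi}\overline{\chi(a)}\,\psi(x,\chi),
$$
does not equal $\pi_{\Z[j]}(x;w,a)+O(1)$: it counts \emph{all} degree-one prime ideals in ray class $a$, which after conjugation equals $\pi_{\Z[j]}(x;w,a)+\pi_{\Z[j]}(x;\overline{w},\overline{a})$, and the two summands are in general different. Your final ``the bookkeeping of unit orbits cancels out of the ratio'' remark does not repair this, because the missing symmetry is between conjugate prime ideals, not between associates.

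To separate $\pi_{\Z[j]}(x;w,a)$ from $\pi_{\Z[j]}(x;\overline{w},\overline{a})$ you need equidistribution of the \emph{arguments} of the primary generators, jointly with the residue condition modulo $w$. This is exactly what Hecke Gr\"o\ss encharacters with nontrivial infinity type (i.e. the angular components $\pi\mapsto (\pi/|\pi|)^m$) provide; it is the content of Mitsui's theorem cited in the paper and of the corresponding lemmas in Fouvry--Kl\"uners for $\Z[i]$. Once you introduce the infinity-type characters alongside the ray class characters modulo $(3w)$, the rest of your argument (analytic continuation, functional equation, zero-free region, Siegel's bound with ${\rm N}(w)\leq(\log x)^A$) goes through essentially as you wrote it.
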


This proposition gives the desired cancellation in sums over multiplicative characters $\chi$ on $\Z [j]$.

\begin{lemma} 
\label{sumchi(pi)**}
For every $A>0$, there exists $c(A) > 0$, such that, uniformly for $x \geq 2, w\in \Z [j]$, coprime with $3$ and satisfying $1 <{\rm N} (w) \leq  (\log x)^A$, $\chi$ a non-trivial character modulo $w$, one has the inequality 
$$
\sum_{\pi \textup{\, standard prime}\atop {\rm N} (\pi) \leq x} \chi (\pi)  = O \bigl( x \exp (- c(A) \sqrt {\log x}\, ) \bigr).
$$ 
In particular, for any $A>0$, there exists $C(A)$ such that, for any non-trivial character $\chi$ over $\Z[j]$, with period $w$, for every $x \geq 2$, one has the inequality
$$
\Bigl\vert \, \sum_{\pi \textup{\, standard prime}\atop {\rm N} (\pi) \leq x} \chi (\pi) \, \Bigr\vert  \leq   C(A)\,  x\, {\rm N}^{1/2}(w) \, (\log  x)^{-A}.
$$ 
\end{lemma}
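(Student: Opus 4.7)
My plan is to deduce both bounds from Proposition \ref{SW*} by the standard orthogonality argument, and to obtain the second assertion from the first by a trivial-bound dichotomy on the size of ${\rm N}(w)$.

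For the first assertion, I would open the sum into residue classes modulo $w$ using the definition of $\chi$:
\[
\sum_{\substack{\pi \text{ standard prime} \\ {\rm N}(\pi) \leq x}} \chi (\pi) = \sum_{\substack{a \bmod w \\ (a, w) = 1}} \chi (a)\, \pi_{\Z[j]}(x; w, a),
\]
where standard primes dividing $w$ contribute $0$ because $\chi(\pi) = 0$ on them. Since $(w, 3) = 1$ and ${\rm N}(w) \leq (\log x)^A$, Proposition \ref{SW*} applies uniformly to each $a$ and yields
\[
\sum_\pi \chi(\pi) = \frac{\pi_{\Z[j]}(x; 1, 0)}{\phi(w)} \sum_{a} \chi(a) + O\bigl(\phi(w) \cdot x \exp(-c(A) \sqrt{\log x})\bigr).
\]
The character-sum main term vanishes because $\chi$ is non-trivial. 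The remaining error is $O({\rm N}(w) \cdot x \exp(-c(A)\sqrt{\log x}))$, and the factor ${\rm N}(w) \leq (\log x)^A$ is absorbed into the exponential by shrinking $c(A)$ slightly (for $x$ large enough, $(\log x)^A \leq \exp((c(A)-c'(A))\sqrt{\log x})$).

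For the second assertion, I dichotomise on ${\rm N}(w)$. If ${\rm N}(w) \leq (\log x)^{2A+1}$, apply the first assertion with exponent $2A+1$: this gives a bound $O(x \exp(-c \sqrt{\log x})) = O(x (\log x)^{-A})$, which is at most $O(x\, {\rm N}(w)^{1/2}(\log x)^{-A})$ since ${\rm N}(w) \geq 1$. Otherwise ${\rm N}(w)^{1/2} > (\log x)^{A + 1/2}$, so the trivial bound $|S| \leq \pi_{\Z[j]}(x; 1, 0) \ll x$ already satisfies $|S| \ll x \cdot {\rm N}(w)^{1/2}(\log x)^{-A}$. A final adjustment of the constant handles the bounded range of small $x$.

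The only real depth in this chain lies in Proposition \ref{SW*} itself, whose proof runs through the analytic theory of Hecke $L$-functions over $\Q(j)$ (zero-free region, Siegel-type lower bound on real zeros) in the vein of the references \cite{Fo--Kl4} and \cite{Mi} cited just before the statement. Granted that ingredient, the orthogonality and dichotomy steps are entirely mechanical; the only subtlety to flag is that the orthogonality step uses $\chi(\pi) = 0$ for $\pi \mid w$ to match the prime sum with the sum over residue classes coprime to $w$, and that $(w,3)=1$ is needed to invoke Proposition \ref{SW*}.
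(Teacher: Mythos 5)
Your proof is correct and takes the same approach as the paper: rewrite the prime sum as $\sum_{(a,w)=1}\chi(a)\,\pi_{\Z[j]}(x;w,a)$, apply Proposition \ref{SW*} to each class, and use $\sum_{(a,w)=1}\chi(a)=0$ to kill the main term, with the factor $\phi(w)\leq {\rm N}(w)\leq(\log x)^A$ absorbed into the exponential. The paper's proof is terser — it states only the orthogonality step and treats the ``In particular'' as an immediate consequence — whereas you have spelled out the standard dichotomy on the size of ${\rm N}(w)$ (first assertion for small $w$, trivial bound for large $w$) that justifies the second inequality.
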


\begin{proof}
We write the sum in question as
$$
\sum_{a \bmod w\atop (a, w) =1} \chi (a)\,  \pi_{\Z [j]} (x; w, a)
$$
and then apply  Proposition \ref{SW*}. Recalling that $\sum_{(a, w) = 1} \chi (a) = 0$ for a non-trivial character $\chi$ modulo $w$ finishes the proof.
\end{proof}

\subsubsection{Bounding $\vert \widetilde A (\boldsymbol \eta, \, \boldsymbol \zeta, \boldsymbol \epsilon, f_0, f_1) \vert$}  
\label{2.4.2}
Returning to the definitions \eqref{deftildeM*} and \eqref{concisedefinition*}
and applying Lemmas \ref{nontrivial**} and \ref{sumchi(pi)**}, we deduce that, for any $A > 0$, for any $f\in V^*$, for any $2 < U < Z$, for any $\boldsymbol \epsilon$ and $\boldsymbol e$ satisfying \eqref{812*}, we have
\begin{equation}
\label{932*}
\sum_{U <p_\infty <Z} \widetilde M (p_\infty) =
\sum_{\pi \text{ standard prime} \atop U < {\rm N} (\pi)<Z} M (\pi, f, \boldsymbol \epsilon, \boldsymbol e)= O_A \bigl( \Delta (f)^{1/2} Z (\log U)^{-A}\bigr).
\end{equation}
The constant implicit in the $O$--symbol depends on $A$ only. By the third and  fourth lines of \eqref{finalconditions*} we know that  $\Delta'_1$ is large, since it satisfies the inequality
$$
\Delta'_1\geq X^{1/4}\LL^{-A_0-A_2}.
$$
Furthermore, $p_\infty $ is the largest prime divisor of $\Delta'_1$ (see \eqref{Delta'1=*}) and $\Delta'_1$ has few prime factors (see the fifth line of \eqref{finalconditions*}) so we deduce the lower bound
$$
p_\infty \geq  (X^{1/4} \LL^{-A_0-A_2}\, \bigr) ^{1/A_3 \log \log X}\gg \exp \Bigl( \frac \LL{A_6 \log \log X}\Bigr),
$$
for some positive $A_6$. So we apply \eqref{932*} by choosing $U$ satisfying $\log U \gg \LL^{1/2}$ and $1 < \Delta (f) \leq \LL^{A_0}$ (see the third line of \eqref{finalconditions*}). The value of $Z$ is given by the last line of \eqref{finalconditions*} 
$$ 
Z=X^{1/4}\bigl/ \bigl( d^{3/2} \Delta_0^{3/2} \Delta_1^{3/2} \Delta''\bigr) . 
$$ 
Inserting these values in \eqref{932*}, we deduce, by \eqref{deftildeM*}, that
$$
\vert \widetilde A (\boldsymbol \eta, \, \boldsymbol \zeta, \boldsymbol \epsilon, f_0, f_1) \vert \ll_A X^{1/4}\, \bigl/\, \bigl(
d^{3/2}\, \Delta_0\,\Delta_1 \, {\Delta''_1} \, \LL^A\bigr)
$$
for any $A > 0$. Combining with  \eqref{A=A*}, with \eqref{LLA5*} and with \eqref{noteasy*}, we obtain the bound
\begin{multline*}
{\rm ET} (X, d) \ll X^{1/4} \LL^{A_5}\, \underset{\Delta_0,\, \Delta_1,\, \Delta''_1} {\sum\, \sum \, \sum}  \, 4^{\omega (\Delta_0)}\cdot 2^{\omega (\Delta_1)}\cdot 6^{\omega (\Delta''_1)}\,  \bigl(
d^{3/2}\, \Delta_0\,\Delta_1 \, {\Delta''_1} \, \LL^A\bigr)^{-1},
\end{multline*}
where $A$ is arbitrary. It remains to perform a crude summation  over $\Delta_0$, $\Delta_1$,  $\Delta''_1\, (< \Delta'_1)$ satisfying \eqref{finalconditions*} and  over $d \leq \LL^{A_4} $. By choosing $A$ sufficiently large we complete the proof of the following proposition

\begin{proposition}
\label{ETissmall*}
Uniformly for $X \geq 2$ one has
$$
\sum_{d \in \N_3^*\atop d\leq \LL^{A_4}} 2^{\omega (d)}\, {\rm ET}(X, d) = O ( X^{1/4}\LL^{-1}).
$$
\end{proposition}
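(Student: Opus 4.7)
The plan is to take the pointwise upper bound for ${\rm ET}(X,d)$ already displayed in the paragraph preceding the statement and insert it into a straightforward summation over $d$, $\Delta_0$, $\Delta_1$ and $\Delta''_1$. All the genuine analytic work has been carried out in \S\ref{2.4.2}, where Lemma~\ref{sumchi(pi)**} (the Siegel--Walfisz analogue for standard primes in $\Z[j]$) produced an arbitrarily large logarithmic saving $\LL^{-A}$ in the sum over $\pi_\infty$; my task here is only to verify that the polylogarithmic losses from summing the remaining small variables cannot destroy this saving.

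First I would bring the weight $2^{\omega(d)}$ inside. Because the pointwise bound carries $d^{-3/2}$, the sum $\sum_{d\in \N_{3}^*} 2^{\omega(d)}/d^{3/2}$ converges absolutely (a trivial consequence of Lemma~\ref{sumbomega(n)} together with partial summation), so the outer summation over $d \leq \LL^{A_4}$ contributes only $O(1)$. For $\Delta_0$ and $\Delta_1$, the restriction $\Delta_0\Delta_1 \leq \LL^{A_0}$ from \eqref{finalconditions*}, combined with the $\N_{3}^*$--version of Lemma~\ref{sumbomega(n)} (which halves the exponent), bounds
\[
\sum_{\Delta_0 \in \N_{3}^*} \frac{4^{\omega(\Delta_0)}}{\Delta_0} \quad \text{and} \quad \sum_{\Delta_1 \in \N_{3}^*} \frac{2^{\omega(\Delta_1)}}{\Delta_1}
\]
each by a fixed power of $\LL$. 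For the $\Delta''_1$ sum one has the much weaker size constraint $\Delta''_1 \leq \Delta'_1 \leq X^{1/4}$; however the harmonic weight $1/\Delta''_1$ combined with Lemma~\ref{sumbomega(n)} and partial summation still yields only a power of $\LL$, since the upper limit of summation enters only through $\log X^{1/4} \asymp \LL$. Let $C$ denote the total exponent of $\LL$ thus accumulated.

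Putting everything together produces a bound of the shape $X^{1/4}\LL^{A_5+C-A}$. Since $A$ is at my disposal, I would finally choose $A \geq A_5+C+1$ and obtain the required estimate $O(X^{1/4}\LL^{-1})$. The main obstacle in the whole proof of Proposition~\ref{ETissmall*} is definitely not this last step but rather the character-sum cancellation in \S\ref{2.4.2}: the nontrivial move there was the identification of $M(\pi)$ as a genuine non-trivial character on $\Z[j]$ (Lemma~\ref{nontrivial**}), after which Siegel--Walfisz delivered an arbitrary logarithmic saving. The bookkeeping here is then just a matter of verifying that the number of \emph{small} variables and their contributions are indeed polylogarithmic in $X$, which is exactly what the conditions in \eqref{finalconditions*} guarantee.
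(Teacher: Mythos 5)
Your proof is correct and follows essentially the same route as the paper: it takes the pointwise bound for ${\rm ET}(X,d)$ (obtained from the Siegel--Walfisz analogue Lemma~\ref{sumchi(pi)**} via the non-trivial character $M(\pi)$ of Lemma~\ref{nontrivial**}), sums crudely over the small variables $d,\Delta_0,\Delta_1,\Delta''_1$ picking up only a fixed power of $\LL$, and closes by choosing $A$ large enough to dominate the accumulated polylogarithmic loss. This is exactly the bookkeeping the paper carries out after the display bounding ${\rm ET}(X,d)$, so no further comment is needed.
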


\begin{remark} 
The orders of magnitude of the variables of summation $\Delta_0 \Delta_1$ and $\Delta'_1$  are completly different (see \eqref{finalconditions*}). So Lemma \ref{sumchi(pi)**} is the unique tool to exploit oscillation of characters. This situation is quite different from \cite{Fo--Kl2} or from \cite{Fo--Kl4}, for instance, where the case of variables with comparable sizes also has to be treated. This is accomplished by appealing to  bounds of {\it double oscillation type } (see \cite[Lemmas 14 \& 15] {Fo--Kl2}, \cite [\S 6]{Fo--Kl4} for instance).
\end{remark}

\subsection{\texorpdfstring{Study of ${\rm MT}(X, d)$}{Study of MT(X, d)}} 
\label{StudyofMT(X)*} 
We now turn our attention to the term ${\rm MT} (X, d)$, defined in \eqref{defMTdef*}. In order to prove that it behaves like a main term, we shall give the following asymptotic formula for the sum
$$
\sum_{d\leq \LL^{A_4}} 2^{\omega (d)} {\rm MT}(X,d)= C_{{\rm Heis^*}} X^{1/4} +O \bigl(\,X^{1/4}\LL^{-1}\,\bigr),
$$
(see \S \ref{finalstep}). By the definition \eqref{defMTdef*} we have
\begin{multline}
\label{975*}
{\rm MT} (X, d) = 2^{-2} 3^{-3}\  \underset {\Delta_0,\ \Delta_1,\ \Delta'_1}{ \sum \ \sum \ \sum }\quad    \underset{\substack{f _0,\, f'_0 \in V^* (\Delta_0)\\  f_1\in
V^* (\Delta_1)\\  f'_1 \in V^* (\Delta'_1)}}{ \sum\ \sum \ \sum \ \sum } \\
\prod_{r \mid \Delta'_1} \bigl\{ 1 +   \chi (f_0+f_1) (r) +\chi (2(f_0+f_1)) (r)    \bigr\}.
\end{multline}
The  factor $\prod_{r \mid \Delta'_1} \{\cdots\} $ is independent of the choice of $f'_0\in V^* (\Delta_0)$ and of  $f'_1 \in V^* (\Delta'_1)$. So we can replace  the summations over $f'_0$ and $f'_1$ by the factor $2^{\omega (\Delta_0)}\cdot 2^{\omega (\Delta'_1)}$. Furthermore the functions $f_0$ and $f_1$
only appear through  their sum $f:= f_0+f_1$. We rewrite $\Delta = \Delta_0  \Delta_1$. With this notation we have $f \in V^* (\Delta)$ 
and $\Delta (f) =\Delta$. Instead of summing over $f_0$, $f'_0$ and $f_1$, we sum over $f \in V^* (\Delta)$
and we  introduce  the factor
$$
\sum_{\Delta_0 \mid \Delta} 2^{\omega (\Delta_0)} = 3^{\omega (\Delta)}.
$$
Gathering these remarks, \eqref{975*} becomes
\begin{multline}
\label{987*}
{\rm MT} (X, d) = 2^{-2} 3^{-3}\  \underset {\Delta ,\ \Delta'_1}{   \sum \ \sum }\ 3^{\omega (\Delta )} \cdot 2^{\omega (\Delta'_1)}\, \\
\times \sum_{f \in V^* (\Delta)} 
\prod_{r \mid \Delta'_1} \bigl\{1 + \chi (f ) (r) + \chi (2f) (r)\bigr\} + O (X^{1/4} \LL^{-2}).
\end{multline}
The conditions of summation in \eqref{987*} are inferred from \eqref{finalconditions*}:
\begin{equation}
\label{995*}
\begin{cases}
d\Delta\, \Delta'_1 \in \N_{3}^*,\\
1 < \Delta \leq \LL^{A_0},\\
\Delta \, {\Delta'_1}^{2/3} \leq X^{1/6}/d.
\end{cases}
\end{equation}
The error term in \eqref{987*} comes from forgetting the fourth and the fifth lines of \eqref{finalconditions*}.  We control  the induced  error as it was done in  the proofs of Propositions \ref{smallDelta'} and \ref{largeomega}. The first condition of \eqref{995*} implies that $d$, $\Delta$ and $\Delta'_1$ are coprime in pairs.

\subsubsection{Expanding the product over primes $r$} 
By the multiplicativity of characters, the product appearing in \eqref{987*} equals
\begin{equation}\label{prod->sum}
\prod_{r \mid \Delta'_1} \bigl\{ 1 +   \chi (f ) (r) +\chi (2f) (r)    \bigr\} = \underset {d_0\, d_1 \, d_2 = \Delta'_1}{\sum \sum \sum} \,\chi (f) (d_1) \,\chi(2f) (d_2).
\end{equation}
We insert this expression in \eqref{987*} and we invert summations to obtain
\begin{multline}
\label{1008*}
{\rm MT} (X,d)= 2^{-2} 3^{-3}\  \sum_\Delta 
3^{\omega (\Delta )}  \sum_{f \in V^* (\Delta)} \underset{ d_1,\, d_2} {\sum\ \sum} 
\Bigl(2^{\omega (d_1)} \chi (f) (d_1)   \Bigr) 
\\
\times  \Bigl(   2^{\omega (d_2)} \chi (2f) (d_2)  \Bigr) \cdot \Bigl( \sum_{d_0}     2^{\omega (d_0)} \Bigr)
+O (X^{1/4} \LL^{-2}), 
\end{multline}
where the conditions of summation are deduced from \eqref{995*}
\begin{equation}
\label{1017*}
\begin{cases}
(d d_0 d_1 d_2)\,\Delta\,   \in \N_{3}^*,\\
1<\Delta \leq \LL^{A_0},\\
\Delta \, (d_0\, d_1\, d_2)^{2/3} \leq X^{1/6}/d.
\end{cases}
\end{equation}

\subsubsection{Controlling the sizes of $d_1$ and $d_2$} 
The last line of \eqref{1017*} implies that the product $d_1d_2$ can be as large as $X^{1/4}$. In that case \eqref{1017*} shows that the variable $d_0$ has no room for variation and Proposition \ref{complexanalysis*} below is useless in that situation (see formula  \eqref{perronapplied}). To circumvent this particular difficulty we invert summations as in the hyperbola method, to exploit the presence of the oscillating coefficients $\chi (f) (d_1)$ and $\chi(2f)(d_2)$. These non-trivial Dirichlet characters, with moduli $\ll \LL^{A_0}$, allow us to restrict the summation to 
$$
d_1,\,d_2 < D_0,
$$ 
where $D_0$ is a small power of $X$:
$$
D_0 := X^{1/100}.
$$
Indeed the contribution of the $(\Delta, d_0, d_1,d_2)$ to the right--hand side of \eqref{1008*} satisfying $\max (d_1,d_2)>D_0$ is negligible. To see this, consider for instance the case when $d_1 > D_0$. The corresponding contribution, denoted by $\Xi (D_0, d)$, is bounded by
\begin{equation}
\label{1034*}
\Xi (D_0,d) \ll \sum_\Delta 3^{\omega (\Delta)}\,\sum_{f\in V^* (\Delta)}
\sum_{d_0} 2^{\omega (d_0)}   \sum_{d_2}    2^{\omega (d_2)} \\
\Bigl\vert \, \sum_{d_1 } 2^{\omega (d_1)} \chi (f) (d_1)  \, 
\Bigr\vert
\end{equation}
where $D_0 <d_1\leq D_1:=X^{1/4}\Delta^{-3/2} d^{-3/2}d_0^{-1} \, d_2^{-1}.$

The Siegel--Walfisz Theorem allows us to save any power of $\LL$ over the trivial bound in the sum over $d_1$. More precisely, for any $A > 0$, one has the bound
\begin{equation}
\label{sumoverd1<*}
\sum_{D_0 < d_1 < D_1} 2^{\omega (d_1)} \chi (f) (d_1) \ll D_1 \, (\log D_0)^{-A}\ll D_1\, \LL^{-A}.
\end{equation}
Inserting this bound in \eqref{1034*}, summing over $\Delta$, $d_0$ and $d_2$, and choosing $A$ sufficiently large, we obtain the bound
\begin{equation}\label{Xi(D0)<<*}
\Xi (D_0,d) \ll X^{1/4} \LL^{-2}.
\end{equation}
We give some details about the proof of \eqref{sumoverd1<*}. The process is similar to what was explained in \S \ref{2.4.2}. First of all, one can restrict to $d_1$ with a reasonable number of prime factors, which means $\omega (d_1) \leq B_0 \log \log X$ for some $B_0$ with acceptable error by Lemma \ref{manyprimefactors}. The remaining $d_1$ are then factorized as $d_1 = p_\infty \delta_1$, where $p_\infty $ is the greatest prime factor of $d_1$. The prime $p_\infty$ is a large variable to which we can apply a Siegel--Walfisz Theorem related to the Dirichlet $L$--functions $L(s, \chi (f))$ and $L(s, \chi(f) \,( \frac {\cdot} 3))$. The second line of \eqref{1017*} ensures that the conductor of these $L$--functions is larger than $1$ but  less than $3 \LL^{A_0}$, which is the adequate situation to apply the Siegel--Walfisz Theorem. We omit the details.

In conclusion, by \eqref{Xi(D0)<<*}, we proved that \eqref{1008*} remains true, with the conditions of summations \eqref{1017*} replaced by 
\begin{equation}
\label{1055*}
\begin{cases}
(d d_0 d_1 d_2)\,\Delta\,   \in \N_{3}^*,\\
1<\Delta \leq \LL^{A_0},\\
d_1, \, d_2 \leq D_0,\\
\Delta \, (d_0\, d_1\, d_2)^{2/3} \leq X^{1/6}/d.
\end{cases}
\end{equation}

\subsubsection{Summing a multiplicative function on $\N_3^*$} 
To continue our study of the main term ${\rm MT} (X, d)$, as presented in \eqref{1008*}, we have to give a precise asymptotic expansion for $\sum_{d_0} 2^{\omega (d_0)}$. Actually we will study the following more general problem which is obviously linked with the possible extension of Theorem \ref{tMain} to any odd prime $\ell$: let $\ell \geq 3$ be prime, $d\geq 1$ an integer and $x\geq 1$ be a real number. We consider the sum 
\begin{equation*}
{\rm K} (x; \ell, d):= \sum_{\substack{ n\leq x, \, n\in \N_{\ell}^* \\ (n,d) =1} }\, (\ell -1)^{\omega (n)}.
\end{equation*}
Without loss of generality, we assume that
\begin{equation}
\label{condford*}
d\in \N_\ell^*.
\end{equation}
For the statement of our result, we denote by $\chi_0$, $\chi_1$,...,  $\chi_{\ell -2}$, the $\ell - 1$ Dirichlet characters modulo $\ell$, $\chi_0$ being the principal character. There is no risk of confusion with the notation introduced by \eqref{conventiononcharacter}. Let $\alpha_\ell$ be the infinite product
\begin{equation}
\label{Cell*}
\alpha_\ell := \frac \ell{\ell +1}   
\ \prod_p \Bigl\{\Bigl( 1 + \frac {1}{p}+\frac{\chi_1(p)}{p}+  \cdots + \frac{\chi_{\ell -2}(p)}{p}\Bigr)\cdot \Bigl( 1- \frac 1{p}\Bigr)
\Bigr\},
\end{equation}
and let $\psi_\ell (d)$ be the multiplicative function 
\begin{equation}
\label{defpsi*}
\psi_{\ell} (d) :=\prod_{p\mid d}\Bigl( 1 +\frac{(\ell -1)}p\Bigr)^{-1}.
\end{equation}
We will prove the following

\begin{proposition} 
\label{complexanalysis*}
Let $\ell \geq 3$ be a fixed prime. There exists $\nu = \nu_\ell > 0$ such that, uniformly for $d\geq 1$ satisfying \eqref{condford*} and $x\geq 2$, one has the equality 
$$
{\rm K} (x; \ell, d) = \alpha_{\ell} \, \psi_{\ell} (d)\,x + O \Bigl( \tau (d)^{\ell -1} x^{1-\nu}\Bigr).
$$
\end{proposition}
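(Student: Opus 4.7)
The plan is to analyse the Dirichlet series
\[
F_d(s) := \sum_{\substack{n \in \N_\ell^* \\ (n,d)=1}} \frac{(\ell-1)^{\omega(n)}}{n^s} = \prod_{\substack{p \equiv 1 (\ell) \\ p \nmid d}} \Bigl(1 + \frac{\ell-1}{p^s}\Bigr),
\]
which converges absolutely for $\Re(s)>1$, and then to apply a standard Perron-plus-contour-shift argument. First I would factor
\[
F_d(s) = L_\ell^{\dagger}(s) \cdot K_d(s), \qquad L_\ell^{\dagger}(s) := \prod_{\chi \bmod \ell} L(s,\chi),
\]
where $K_d(s)$ extends absolutely to the half-plane $\Re(s)>1/2$. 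To verify this, compute the Euler factors of $L_\ell^{\dagger}(s)$: at $p \equiv 1 (\ell)$ the factor is $(1-p^{-s})^{-(\ell-1)}$; at $p \not\equiv 0,1 (\ell)$ with order $m_p \geq 2$ in $(\Z/\ell)^{\ast}$ it is $(1-p^{-m_p s})^{-(\ell-1)/m_p}$; and at $p=\ell$ it is $1$ since every character modulo $\ell$ vanishes on $\ell$. Matching these against the factors of $F_d(s)$ and invoking the Taylor identity $(1-x)^{\ell-1}(1+(\ell-1)x) = 1 + O(x^2)$ shows that every Euler factor of $K_d(s)$ is $1 + O(p^{-2\Re s})$, so $K_d(s)$ is holomorphic and bounded on $\Re(s)\geq 1/2+\delta$ for any fixed $\delta>0$.

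The pole structure of $F_d(s)$ is inherited from $L_\ell^{\dagger}(s)$. Only $L(s,\chi_0)=\zeta(s)(1-\ell^{-s})$ contributes a pole, namely a simple pole at $s=1$ of residue $1-1/\ell$, while the $L(1,\chi_i)$ for $i \geq 1$ are finite and non-zero. Hence
\[
\mathrm{Res}_{s=1} F_d(s) \;=\; (1-1/\ell)\,\prod_{i=1}^{\ell-2} L(1,\chi_i) \cdot K_d(1).
\]
Isolating the $d$-dependence is immediate from the Euler product of $K_d(s)$: one reads off $K_d(1)=K_1(1)\cdot \psi_\ell(d)$ by peeling off the factors at primes $p\mid d$. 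A direct Euler-factor-by-Euler-factor comparison at $p \equiv 1(\ell)$ and at $p \not\equiv 0,1(\ell)$ separately, together with the identity $\tfrac{\ell}{\ell+1}(1-\ell^{-2}) = 1-1/\ell$ that absorbs the $p=\ell$ contribution in the paper's definition of $\alpha_\ell$, then identifies this residue precisely as $\alpha_\ell \psi_\ell(d)$.

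With this analytic input in hand the proof concludes via a truncated Perron formula for $F_d(s)x^s/s$ on the line $\Re(s)=1+1/\log x$, shifted to $\Re(s)=1-\eta$ for a small fixed $\eta \in (0,1/2)$ lying inside the classical zero-free region of the Dirichlet $L$-functions modulo the fixed prime $\ell$; no Siegel-type obstruction arises since $\ell$ is fixed. The pole at $s=1$ supplies the main term $\alpha_\ell\psi_\ell(d)\,x$, and the shifted integral together with the horizontal segments contribute an error bounded via Phragm\'en--Lindel\"of growth estimates for $L_\ell^{\dagger}(s)$ combined with an absolute bound on $K_d(s)$ on the contour.

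The only place where $d$ enters the error is through $\sup_{\Re s = 1-\eta}|K_d(s)|$. Since $d$ is squarefree, the $d$-dependent portion of the Euler product of $K_d(s)$ equals $\prod_{p\mid d}(1-p^{-s})^{\ell-1}$, which is bounded in modulus by $2^{(\ell-1)\omega(d)} = \tau(d)^{\ell-1}$ on every vertical line with $\Re(s) \geq 0$. This produces exactly the stated error $O(\tau(d)^{\ell-1}x^{1-\nu})$. The main technical point, and the one requiring real care, is this $d$-uniform bound together with the standard contour-shift estimates; everything else is entirely classical.
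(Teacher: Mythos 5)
Your argument is correct and runs along the same basic lines as the paper (Dirichlet series, Euler product factorization against Dirichlet $L$-functions modulo $\ell$, truncated Perron formula, contour shift past $s=1$), but your decomposition is cleaner than the paper's in one genuinely useful respect. The paper factors $F(s)=\zeta(s)\,H_{\ell,d}(s)$ where $H_{\ell,d}$ carries the factor $\prod_{p\mid d}(1+(\ell-1)p^{-s})^{-1}$, which has potential singularities at $\sigma=\log(\ell-1)/\log p$ for $p\mid d$; this forces the paper to restrict the contour to the $\ell$-dependent half-plane $\sigma>\log(\ell-1)/\log(\ell+1)$, which approaches $1$ as $\ell\to\infty$. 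By instead dividing out the full product $L_\ell^\dagger(s)=\prod_{\chi\bmod\ell}L(s,\chi)$, you replace that factor by the entire function $\prod_{p\mid d}(1-p^{-s})^{\ell-1}$, and the remainder $K_d(s)$ is holomorphic and bounded by $O(\tau(d)^{\ell-1})$ uniformly in $d$ on any half-plane $\sigma\geq 1/2+\delta$, regardless of $\ell$. This makes the $d$-uniformity and the identification of the permissible $\nu_\ell$ more transparent, though the final statement (existence of some $\nu_\ell>0$) is of course the same. Your Euler-factor verification — including the group-character identity at primes of order $m_p\geq 2$ in $(\Z/\ell)^\ast$ and the identity $\tfrac{\ell}{\ell+1}(1-\ell^{-2})=1-1/\ell$ absorbing the $p=\ell$ contribution — correctly reproduces the residue $\alpha_\ell\psi_\ell(d)$. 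One small inaccuracy: your appeal to the ``classical zero-free region'' of the $L(s,\chi)$ modulo $\ell$ is unnecessary and, as stated, not literally true (the classical region is not a fixed vertical strip). No zero-free region is needed here since the $L$-functions appear in the numerator, not the denominator; what you actually require is only holomorphy of $L_\ell^\dagger(s)K_d(s)$ on $\sigma\geq 1-\eta$ away from $s=1$ together with polynomial growth on the contour, both of which you already invoke via Phragm\'en--Lindel\"of. With that minor fix, the proof is complete.
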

  
\begin{proof}   
Consider the  Dirichlet series
\begin{equation*} 
F(s)=F_{\ell, d} (s):= \sum_{ n\in \N_{\ell}^* \atop (n,d)=1}\,\frac{(\ell -1)^{\omega (n)}}{n^s}
= \sum_{n} \frac {a_n}{n^s},
\end{equation*}
by definition. This series is absolutely convergent in the half--plane $\{s : \sigma >1\}$. In this region, $F(s)$  has an expresion as an Euler product
$$
F(s)= \prod_{p\in \PP_\ell^* \atop p \nmid d} \Bigl(1 + \frac {(\ell -1)}{p^s}\Bigr).
$$
For a prime $p\not= \ell$ we detect the condition $p\equiv 1 \bmod \ell$, by the sum
$$
\frac 1{\ell -1}\bigl( \chi_0 (p) + \cdots + \chi_{\ell -2} (p) \bigr).
$$
Thus $F(s)$ has the following expression
\begin{align}
F(s)& = \prod_{p\nmid \ell d} \Bigl( 1 + \frac {\chi_0 (p)}{p^s}+ \cdots + \frac{\chi_{\ell -2}(p)}{p^s}\Bigr)\nonumber\\
& =\Bigl( 1 + \frac 1{\ell^s}\Bigr)^{-1} \ \prod_{p\mid d} \Bigl( 1+ \frac{(\ell -1)}{p^s}\Bigr)^{-1}\,  \prod_{p} \Bigl( 1 + \frac {1}{p^s}+\frac{\chi_1(p)}{p^s}+  \cdots + \frac{\chi_{\ell -2}(p)}{p^s}\Bigr).\label{148*}
\end{align}
Recall the following Euler products for $\sigma  >1$:
$$
\zeta (s)^{-1} = \prod_p \Bigl(1 -\frac 1 {p^s}\Bigr),
$$
and
$$
L(s, \chi_j) ^{-1} = \prod_{p} \Bigl( 1 - \frac{\chi_j (p)}{p^s}\Bigr)\ (1\leq j \leq \ell-2).
$$
Inserting these products into \eqref{148*}, we have the equality
\begin{equation}
\label{159*}
F(s) =  \zeta (s) \Bigl[ \Big( 1+ \frac 1{\ell^s}\Bigr)^{-1} \cdot \prod_{p\mid d} \Bigl( 1 +\frac{(\ell -1)}{p^s}\Bigr)^{-1}\cdot  L(s, \chi_1)\cdots L (s, \chi_{\ell-2})\Bigr] G(s),
\end{equation}
where $G(s)$ is defined by the Euler product
\begin{equation*}
G(s):= \prod_p \Bigl\{\Bigl( 1 + \frac {1}{p^s}+\frac{\chi_1(p)}{p^s}+  \cdots + \frac{\chi_{\ell -2}(p)}{p^s}\Bigr)\cdot \Bigl( 1- \frac 1{p^s}\Bigr)\cdot 
\Bigl( 1- \frac {\chi_1 (p)}{p^s}\Bigr) \cdots \Bigl( 1- \frac {\chi_{\ell-2} (p)}{p^s}\Bigr)
\Bigr\}.
\end{equation*}
Actually this Euler product is absolutely convergent in the half--plane $\{ s : \sigma >1/2\}$. Returning to \eqref{159*} we proved that the Dirichlet series $F(s)$  has a meromorphic continuation of the form
$$
F(s) = \zeta (s) H(s), 
$$
where $H(s)= H_{\ell, d} (s)$ is holomorphic on the half plane
$$
\Omega := \bigl\{ s : \sigma >(\log (\ell -1))/\log (\ell +1)\bigr\}.
$$ 
On this half--plane, $F(s)$ has a unique pole at $s=1$. This pole is induced by the singularity of $\zeta$ at $s = 1$. Hence this pole of $F$ is simple with residue
\begin{multline*}
{\rm Res} (F; s=1) = H_{\ell, d}(1) = \psi_{\ell} (d)\cdot 
\frac \ell{\ell +1}   \cdot \Bigl[  L(1, \chi_1)\cdots L (1, \chi_{\ell-2})\Bigr] \\
\times \prod_p \Bigl\{\Bigl( 1 + \frac {1}{p}+\frac{\chi_1(p)}{p}+  \cdots + \frac{\chi_{\ell -2}(p)}{p}\Bigr)\cdot \Bigl( 1- \frac 1{p}\Bigr)
\Bigl( 1- \frac {\chi_1 (p)}{p}\Bigr) \cdots \Bigl( 1- \frac {\chi_{\ell-2} (p)}{p}\Bigr)\Bigr\},
\end{multline*}
which equals
$$
{\rm Res} (F; s=1) = \alpha_\ell\, \psi_\ell (d).
$$
The  number $\alpha_\ell$  is not zero as a  consequence of the fact that $L(1, \chi_j) \neq 0$. We apply an effective version of Perron's formula (see for instance \cite[Corollary 5.3, p. 140]{MoVa}) to obtain the equality
\begin{multline}
\label{K(x;ell,d)=*}
{\rm K} (x; \ell, d) =\frac 1{2\pi i} \int_{\kappa -iT}^{\kappa +iT}  F(s) \frac {x^s}s {\rm d} s \\+ O \Bigl( \sum_{x/2 < n <2x\atop n\not= x} 
\vert a_n \vert \min \Bigl( 1, \frac{x}{T\vert x-n\vert}\Bigr)\Bigr) +O \Bigl( \frac{ 4^\kappa +x^\kappa}T\ \sum_{n=1}^\infty \frac {\vert a_n\vert}{n^{\kappa}}\Bigr)+ O (x^\varepsilon).
\end{multline}
If we choose $\kappa = 1 + 2\varepsilon$, and $T = x^{\vartheta}$ ($\vartheta > 0$), we have the equality 
$$
{\rm K} (x; \ell, d) =\frac 1{2\pi i} \int_{\kappa -iT}^{\kappa +iT}  F(s) \frac {x^s}s {\rm d} s + O(x^{1-\vartheta +\varepsilon})
$$
by the inequality $\vert a_n \vert \ll n^\varepsilon$ and by separating the cases  $\vert x-n\vert <x/T$ and $\vert x -n \vert \geq  x/T$ in the first sum  on the right--hand side of \eqref{K(x;ell,d)=*}. 
 
We transform the path of integration into a vertical segment  $\sigma_0 +it$ with $\sigma_0 <1$ and  $\vert t \vert \leq T$  belonging to $\Omega$ and two horizontal segments belonging to the lines with equations $t = T$ and $t = -T$.  On these segments, the function $G(s)$, defined in \eqref{159*}, is uniformly bounded and we also have 
$$
\Big( 1+ \frac 1{\ell^s}\Bigr)^{-1} \cdot \prod_{p\mid d} \Bigl( 1 +\frac{(\ell -1)}{p^s}\Bigr)^{-1}=O \bigl(\tau (d)^{\ell -1}\bigr).
$$
By classical bounds for the functions $L(s, \chi_j)$ on these segments, by an optimal choice of $\vartheta$ and $\sigma_0$,  
we complete the proof of Proposition \ref{complexanalysis*}. 
\end{proof}

We apply  Proposition \ref{complexanalysis*} with the values 
$$
n \leftarrow d_0, \ \ell \leftarrow 3, \ d \leftarrow dd_1d_2\Delta, \ x\leftarrow X^{1/4} d^{-3/2} d_1^{-1} d_2^{-1}\Delta^{-3/2}
$$
to obtain the equality
\begin{equation}
\label{perronapplied}
\sum_{d_0} 2^{\omega (d_0)} = \alpha_3\, \psi_3 \bigl(dd_1d_2 \Delta\bigr)\,  \frac {X^{1/4}}{d^{3/2} d_1 d_2 \Delta^{3/2}} + O \Bigl( \tau^2 (dd_1d_2 \Delta)
\, \Bigl( \frac {X^{1/4}}{d^{3/2} d_1 d_2 \Delta^{3/2}} \, \Bigr)^{1-\nu}\Bigr).  
\end{equation}
Denote by $\mathcal Er (X, d, d_1,d_2, \Delta)$ the error term in the above formula. Since we have the inequalities $d \leq \LL^{A_4}$ (see \eqref{sumoverd}), $d_1, \, d_2 \leq D_0$ and $\Delta \leq \LL^{A_0}$ (see \eqref{1055*}, we see that the total contribution to ${\rm Heis}^\dag (X)$ will be negligible, since we have (see \eqref{sumoverd} and \eqref{U=MT+ET})
\begin{equation}
\label{1228*}
\sum_d \, \sum_{d_1}\sum_{d_2} \, \sum_{\Delta} 2^{\omega (d)} \mathcal Er (X, d, d_1,d_2, \Delta) = O\bigl( X^{1/4-\delta}\bigr),
\end{equation}
for some positive $\delta$. This contribution is compatible with the error term that we claim in \eqref{asympforHeisdag}.

\subsubsection{The final step}
\label{finalstep} 
We insert the equality \eqref{perronapplied} in \eqref{1008*}. By  \eqref{sumoverd}, \eqref{U=MT+ET}, \eqref{1034*}, \eqref{Xi(D0)<<*}, \eqref{1228*} and Proposition \ref{ETissmall*}, we see that, in order to prove \eqref{asympforHeisdag}, it is sufficient to prove that the sum ${\rm Heis}^\ddag (X)$ defined by 
\begin{multline}
\label{1235*}
{\rm Heis}^\ddag (X):= 2^{-2} 3^{-3}\alpha_3\sum_{d} \psi_3 (d)\cdot \frac {2^{\omega (d)}}{d^{3/2}}
\sum_\Delta \psi_3(\Delta)\cdot 
\frac{3^{\omega (\Delta )}}{\Delta^{3/2}} \\ \sum_{f \in V^* (\Delta)} \Bigl( \sum_{d_1} 
\psi_3 (d_1) 2^{\omega (d_1)} \frac{\chi (f) (d_1) }{d_1}  \Bigr) 
\Bigl( \sum_{d_2} \psi_3 (d_2)  2^{\omega (d_2)} \frac{ \chi (2f) (d_2) }{d_2} \Bigr),
\end{multline}
with the conditions of summations
\begin{equation}\label{1243*}
\begin{cases}
(d d_1 d_2)\,\Delta\,   \in \N_{3}^*,\\
1<\Delta \leq \LL^{A_0},\\
d\leq \LL^{A_4},\\
d_1, \, d_2 \leq D_0.\\
\end{cases}
\end{equation}
satisfies the equality
\begin{equation}\label{1252*}
{\rm Heis}^\ddag (X) =C_{{\rm Heis}^*}  +O (\LL^{-1}).
\end{equation}
Once again by the Siegel--Walfisz Theorem, we can drop the conditions $d_1, d_2 \leq D_0$ in \eqref{1243*} with an error in $O(\LL^{-1})$ so that complete series over $d_1$ and $d_2$ appear. By the equality $\chi (f) (d) =0$  if $(d, \Delta (f)) >1$ and the value $\psi_3 (p) =p/(p+2)$ we have the equality 
\begin{align*}
\Bigl( \sum_{d_1}\Bigr) \Bigl( \sum_{d_2}\Bigr) &=
\sum_{d_1\in \N_3^* \atop (d_1,d) =1}
\psi_3 (d_1) 2^{\omega (d_1)} \frac{\chi (f) (d_1) }{d_1}  \prod_{p\in \PP_3^* \atop p\nmid dd_1} \Bigl( 1+ 2 \frac {\chi (2f)(p)}{p+2}\, \Bigr)\\
&= \Bigl\{\, \prod_{p \in \PP_3^*\atop p \nmid d} \Bigl( 1 + 2\frac {\chi (2f)(p)}{p+2}\Bigr) \,\Bigr\}\cdot \Bigl\{\,  \sum_{d_1\in \N_3^* \atop (d_1,d) =1}
\frac{\psi_3 (d_1) 2^{\omega (d_1)} \chi (f) (d_1)}{d_1 \prod_{p\in \PP_3^* \atop p\mid d_1} \Bigl( 1+ 2 \frac {\chi (2f)(p)}{p+2}
\Bigr)} \,\Bigr\}\\
&= \Bigl\{\prod_{p \in \PP_3^*\atop p \nmid d} \Bigl( 1 + 2\frac {\chi (2f)(p)}{p+2}\Bigr)\, \Bigr\}
\cdot \Bigl\{ \prod_{p \in \PP_3^*\atop p\nmid d} \Bigl( 1 + \frac{2\chi (f) (p)}{p+2+2 \chi (2f) (p)} \Bigr)\, \Bigr\}\\
& =  \prod_{p \mid d} \Bigl( \, 1 +2 \frac{ \chi (f) (p) +\chi (2f) (p)}{p+2}\, \Bigr)^{-1} \prod_{p\in \PP_3^*} \Bigl( \, 1 +2 \frac{ \chi (f) (p) +\chi (2f) (p)}{p+2}\, \Bigr).
\end{align*}
We insert this value in \eqref{1235*}, and invert the summations. We extend the summation to all $d\in \N_3^*$ and all $\Delta \in \N_3^*$ with $ \Delta >1$ and $(d, \Delta) = 1$. With an acceptable error in $O (\LL^{-1})$, we have the equality
\begin{multline*}
{\rm Heis}^\ddag (X) = 2^{-2} 3^{-3} \alpha_3 \sum_{\Delta \in \N_3^*\atop \Delta >1} \psi_3 (\Delta)\cdot \frac{3^{\omega (\Delta)}}{\Delta^{3/2}} 
\sum_{f\in V^* (\Delta)} \Bigr\{\, \prod_{p\in \PP_3^*} \, \Bigl( \, 1 +2 \frac{ \chi (f) (p) +\chi (2f) (p)}{p+2}\, \Bigr)\,\Bigr\}\\
\times 
\Bigl\{ \, \prod_{p \in \PP_3^*\atop p \nmid \Delta} \Big( 1 + \frac 2{p^{1/2}\bigl(\, p+2 (1+\chi (f) (p) +\chi (2f) (p))\, \bigr)} \Bigr)\, 
\Bigr\}
+ O (\LL^{-1}).
\end{multline*}
We recognize the constant $C_{{\rm Heis}^*}$ defined in \eqref{defc*}. So we proved \eqref{1252*} and the proof of Proposition \ref{archetype} is now complete.

\subsection{\texorpdfstring{Comments on the constant $C_{{\rm Heis}^*}$}{Comments on the leading constant}}
\label{ssComments}
We will prove the following

\begin{proposition}
\label{1289*}
The constant $C_{\rm Heis^*}$ is a real positive number.
\end{proposition}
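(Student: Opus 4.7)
The plan is to verify positivity factor-by-factor, using the key observation that $\chi(2f)$ is the complex conjugate of $\chi(f)$, which forces every quantity entering $C_{{\rm Heis}^*}$ to be real.

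First I would check that each summand is real. Since $\chi_p$ has order $3$, we have $\chi(2f)(p) = \chi(f)(p)^{\,2} = \overline{\chi(f)(p)}$ for every $p$, so
\[
\chi(f)(p) + \chi(2f)(p) = 2\,\Re\chi(f)(p) \in \{-1,\,0,\,2\},
\]
with value $2$ iff $\chi(f)(p)=1$, value $-1$ iff $\chi(f)(p)\in\{j,j^{2}\}$, and value $0$ iff $p\in{\rm supp}\,f$. Consequently every factor of each Euler product in \eqref{defc*} is real, so each individual summand indexed by $(\Delta,f)$ is real, and $C_{{\rm Heis}^*}\in\R$.

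Next I would verify that each factor is strictly positive by inspecting the three possible values of the character sum. For the first bracketed product, the factor at $p$ equals $(p+6)/(p+2)$ when the sum is $2$, equals $p/(p+2)$ when the sum is $-1$, and equals $1$ when $p\in{\rm supp}\,f$. For the second bracketed product, the factor is $1+\tfrac{2}{p^{1/2}(p+6)}$ or $1+\tfrac{2}{p^{1/2}(p+2)}$ in the two non-zero cases (the case of sum $0$ is excluded by $p\nmid\Delta$). In every case the factor is strictly positive, so each summand of $\sum_{\Delta,f}$ is positive. Combined with $\psi_{3}(\Delta)>0$ and $3^{\omega(\Delta)}/\Delta^{3/2}>0$, this shows the double sum is a (convergent, by Proposition \ref{archetype}) series of strictly positive terms, hence is strictly positive.

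It then remains to show $\alpha_{3}>0$. Each local factor in \eqref{Defalpha3} is positive: for $p\equiv 1\bmod 3$ we get $(1+2/p)(1-1/p)=1+1/p-2/p^{2}>0$; for $p\equiv 2\bmod 3$ we get $(1)(1-1/p)>0$; and for $p=2,3$ the factors are $1/2$ and $8/9$ respectively. Absolute convergence of the product follows from the standard expansion
\[
\log\!\Bigl\{\Bigl(1+\tfrac{1+(\tfrac{p}{3})}{p}\Bigr)\!\Bigl(1-\tfrac{1}{p}\Bigr)\Bigr\}
=\tfrac{(\tfrac{p}{3})}{p}+O\!\Bigl(\tfrac{1}{p^{2}}\Bigr),
\]
since $\sum_{p}(\tfrac{p}{3})/p$ converges (being $\log L(1,(\tfrac{\cdot}{3}))$ via partial summation and Dirichlet's theorem) and the remainder sum converges absolutely. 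Thus $\alpha_{3}$ is a well-defined positive real number, and multiplying the two positive factors $2^{-2}3^{-3}\alpha_{3}$ and the positive series gives $C_{{\rm Heis}^*}>0$.

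The only genuinely non-routine ingredient is the reality of the inner sum, and that is supplied for free by the conjugation identity $\chi(2f)=\overline{\chi(f)}$; once this is noted the proof is a finite case check, so I do not anticipate any real obstacle. (If desired one can reformulate the bracketed products more transparently as the modulus squared of a cubic Euler factor, as foreshadowed in Remark \ref{rComparison}, which would make the positivity manifest but is not needed for the statement.)
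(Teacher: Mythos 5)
Your reality check is correct, and the observation $\chi(2f)(p)=\overline{\chi(f)(p)}$ (so that $\chi(f)(p)+\chi(2f)(p)\in\{-1,0,2\}$) is exactly the right starting point; the positivity of $\alpha_3$ and of the second (absolutely convergent) Euler product are also handled correctly. However, there is a genuine gap in the step ``In every case the factor is strictly positive, so each summand of $\sum_{\Delta,f}$ is positive.'' For an \emph{infinite} product, positivity of every factor does not imply that the product is positive: the first Euler product
\[
\prod_{p\in\PP_3^*}\Bigl(1+2\,\tfrac{\chi(f)(p)+\chi(2f)(p)}{p+2}\Bigr)
\]
has factors in $\{p/(p+2),\,1,\,(p+6)/(p+2)\}$, and the factors $p/(p+2)=1-2/(p+2)$ alone would make the product diverge to $0$, because $\sum_p 1/p$ diverges. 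The product is positive only because the contributions $\pm 2\Re\chi(f)(p)/p$ are conditionally summable, and that conditional convergence to a finite limit is exactly the non-vanishing of $L(1,\chi(f))$ and $L(1,(\tfrac{\cdot}{3})\chi(f))$ at $s=1$. This is precisely what the paper's Lemma \ref{1305*} supplies, by rewriting the product as $|L(1,\chi(f))|^2\,|L(1,(\tfrac{\cdot}{3})\chi(f))|^2$ times an absolutely convergent, term-by-term positive correction. Your closing parenthetical dismisses that reformulation as ``not needed for the statement,'' but it is in fact the essential ingredient; without it your argument shows only that each term of the series is \emph{non-negative} (it could in principle be zero), and hence only that $C_{\rm Heis^*}\ge 0$. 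Note also that you implicitly use the same non-trivial fact in your treatment of $\alpha_3$ (you invoke $\sum_p(\tfrac{p}{3})/p$ being $\log L(1,(\tfrac{\cdot}{3}))$), so consistency would have flagged the issue: the inner Euler products need the identical $L(1,\chi)\neq 0$ input.
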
 

\begin{proof}
It follows from definition \eqref{defc*} that $C_{\rm Heis^*}$ is a real non-negative number, since it is a sum of non-negative real numbers. To prove that $C_{\rm Heis^*} > 0$, it is sufficient to prove that for at least one $\Delta \in \N_3^*, \, \Delta >1$ and one $f \in V^* (\Delta)$, we have
\begin{multline*}
\Bigr\{\, \prod_{p\in \PP_3^*} \, \Bigl( \, 1 +2 \frac{ \chi (f) (p) +\chi (2f) (p)}{p+2}\, \Bigr)\,\Bigr\}\\
\times 
\Bigl\{ \, \prod_{p \in \PP_3^*\atop p \nmid \Delta} \Big( 1 + \frac 2{p^{1/2}\bigl(\, p+2 (1+\chi (f) (p) +\chi (2f) (p))\, \bigr)} \Bigr)\, 
\Bigr\} > 0.
\end{multline*}
By the inequality $1+\chi (f) (p) +\chi (2f) (p) \geq 0$, the second product is  an absolutely convergent product, the limit of which is positive. We will prove the following lemma which implies Proposition \ref{1289*}

\begin{lemma}
\label{1305*} 
We have for every $\Delta \in \N_3^*$ with $\Delta > 1$ and for every $f\in V^* (\Delta)$
$$
\prod_{p\in \PP_3^*} \, \Bigl( \, 1 +2 \frac{ \chi (f) (p) +\chi (2f) (p)}{p+2}\, \Bigr) > 0.
$$
\end{lemma}

To prove this lemma, we will approximate this infinite product, that we denote by $\mathcal P (f)$, by a product of the values at the point $s=1$ of four Dirichlet $L$--series attached to  characters of orders $3$ or $6$. Each factor of $\mathcal P (f)$ is a positive real number. If $p \neq 3$, we detect the congruence $p \equiv 1 \bmod 3$ by the sum $(1+ (p/3))/2$. We have
\begin{align}
\label{ePf}
\mathcal P (f)& = \prod_{p\not= 3} \Bigl( \, 1 + \bigl(1+(\frac p3)\,\bigr) \cdot \frac {\chi (f) (p) + \chi (2f)(p)}{p+2}\,\Bigr) \nonumber \\ 
& = \prod_{p\not= 3} \Bigl( 1+ \frac {\chi (f) (p)}p\Bigr)  \overline{\Bigl( 1+ \frac {\chi (f) (p)}p\Bigr) } \nonumber \\
&\qquad \qquad \qquad \times \Bigl( 1+ \frac { (p/3)\chi (p)}p\Bigr)  \overline{\Bigl( 1+ \frac {(p/3)\chi (f) (p)}p\Bigr)} \Bigl( 1+ \frac {\xi (p)}{p^2}\Bigr), 
\end{align}
where $\xi (p)$ is  some unspecified  real number satisfying  $1+ \xi (p)/p^2>0$ and $\xi (p) =O(1).$ We introduce the factor corresponding to the prime $p=3$ and we continue the transformations of $\mathcal P (f)$ to arrive at the equality
$$
\mathcal P (f) = \bigl\vert L\bigl(\,1, \chi (f)\,\bigr)\vert^2 \cdot  \bigl\vert L\bigl(\, 1, (\cdot/3) \chi (f)\,\bigr)\bigr\vert^2 \prod_{p\geq 2} 
\Bigl( 1+ \frac {\xi' (p)}{p^2}\Bigr), 
$$
where $\xi' (p)$ is  another unspecified  real number satisfying  $1+ \xi' (p)/p^2>0$ and $\xi' (p) =O(1).$ The inequalities $ \bigl\vert L\bigl(\,1, \chi (f)\,\bigr)\vert^2 >0$,  $\bigl\vert L\bigl(\, 1, (\cdot/3) \chi (f)\,\bigr)\bigr\vert^2>0$ and  $\prod_{p\geq 2} 
\Bigl( 1+ \frac {\xi' (p)}{p^2}\Bigr) >0$ imply $\mathcal P (f) >0$.  This gives Lemma \ref{1305*} and also Proposition \ref{1289*}.
\end{proof}

\section{Study of the other sums} 
\label{theothersums}
We now study the thirteen sums  ${\rm Heis}^{(i, j)} (X)$ for $(i, j)\not= \eqref{C1}$ by comparison with ${\rm Heis}^{\eqref{C1}}(X)={\rm Heis}^* (X)$, the asymptotic value  of which is given in Proposition \ref{archetype}.

\subsection{\texorpdfstring{Easy observations between pairs of ${\rm Heis}^{(i, j)}(X)$}{Easy observations between pairs of Heis(i, j)(X)}} 
By inspecting the list of conditions \eqref{C1},\dots, \eqref{C14}, we see that we pass from \eqref{C1} to \eqref{C8}, from \eqref{C2} to \eqref{C9},\dots, from \eqref{C7} to \eqref{C14}, by replacing the condition $3\nmid d$ by $3 \mid d$. By studying Definition \ref{defmu(d)} and definition \eqref{defD}, we easily get

\begin{lemma} 
Let $d$ be an element of $\N_{3}^*$ and let $f, f' \in V$. Then we have the equality 
$$
D(3d, f, f')
=
\begin{cases}
3^{ {12}} \cdot D(d,f, f') &\textup{ if } f(3)=f'(3)=0,\\
D(d,f, f' ) & \textup{ otherwise.}
\end{cases}
$$
\end{lemma}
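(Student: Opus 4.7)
The plan is to reduce the claim to a direct comparison of the $\mu$-factors by exploiting that the formula \eqref{defD} factors the $d$-dependence cleanly. Since
\[
D(d,f,f') = \Delta(f)^6 \cdot \mathrm{free}\bigl(\Delta(f'),\Delta(f)\bigr)^4 \cdot \mu(f,f',d),
\]
the prefactor $\Delta(f)^6 \cdot \mathrm{free}(\Delta(f'),\Delta(f))^4$ is independent of $d$. Consequently the ratio $D(3d,f,f')/D(d,f,f')$ is simply $\mu(f,f',3d)/\mu(f,f',d)$, and I only need to compute this ratio by unfolding Definition \ref{defmu(d)}.

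First I would use the assumption $d \in \N_3^*$, which forces every prime divisor of $d$ to satisfy $p \equiv 1 \bmod 3$; in particular $3 \nmid d$. Therefore the ``otherwise'' branch of Definition \ref{defmu(d)} applies to $d$, yielding $\mu(f,f',d) = \mu(f,f')$. For $3d$ the situation is different: $3 \mid 3d$, so the first branch of Definition \ref{defmu(d)} becomes available whenever its side-condition $f(3)=f'(3)=0$ is met.

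The argument then splits into two cases according to whether $f(3)=f'(3)=0$. In that case Definition \ref{defmu(d)} gives $\mu(f,f',3d) = 3^{12}$, while the first line of the definition of $\mu(f,f')$ gives $\mu(f,f')=1$; hence $\mu(f,f',3d)/\mu(f,f',d) = 3^{12}$, producing the first branch of the stated formula. In the complementary case where at least one of $f(3),f'(3)$ is nonzero, the condition in the first line of Definition \ref{defmu(d)} fails for $3d$ as well, so $\mu(f,f',3d) = \mu(f,f') = \mu(f,f',d)$ and the ratio is $1$, giving the second branch.

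There is no genuine obstacle here; the lemma is essentially a bookkeeping exercise with the two definitions, and the only subtlety worth flagging is the distinction between $\N_3^*$ (where $3$ cannot appear as a prime factor) and $\N_3$ (to which $3d$ belongs), which is precisely what guarantees that the $3$-adic behavior is captured by the case split on $f(3),f'(3)$ rather than by any further condition on $d$.
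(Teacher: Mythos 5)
Your proof is correct and is exactly the bookkeeping the paper has in mind (the paper states the lemma with only the phrase ``we easily get,'' giving no written proof). You correctly isolate that the only $d$-dependence in $D(d,f,f')$ sits in $\mu(f,f',d)$, use $d\in\N_3^*$ to force $3\nmid d$ so that $\mu(f,f',d)=\mu(f,f')$, and then compare against $\mu(f,f',3d)$ via the case split on $f(3),f'(3)$, noting also that $\mu(f,f')=1$ in the first case.
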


We now follow the influence of the conditions $3 \nmid d$ and $3\mid d$ in the value of the sum $S(X,f, f')$ defined in \eqref{definitionS3} (recall that  $\Delta (f) \Delta (f')$ is coprime with $3$ and that ${\rm free }(3d, 3) =d$ for $d\in \N_{3}^*$). This gives the following

\begin{proposition}
\label{1494} 
We have the equalities
$$
{\rm Heis}^{\eqref{C1}} (3^{ {-12}}X) =   {\rm Heis}^{\eqref{C8}} (X),$$
and   
$$ {\rm Heis}^{\eqref{C2}} (X) =   {\rm Heis}^{\eqref{C9}} (X),  \, {\rm Heis}^{\eqref{C3}} (X) =  {\rm Heis}^{\eqref{C10}} (X),$$
$$ {\rm Heis}^{\eqref{C4}} (X) =   {\rm Heis}^{\eqref{C11}} (X), \, {\rm Heis}^{\eqref{C5}} (X) =   {\rm Heis}^{\eqref{C12}} (X),$$
 $$ {\rm Heis}^{\eqref{C6}} (X) =   {\rm Heis}^{\eqref{C13}} (X),\, { \rm Heis}^{\eqref{C7}} (X) =   {\rm Heis}^{\eqref{C14}} (X).$$
\end{proposition}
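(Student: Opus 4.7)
The plan is to prove all seven equalities by a single bijective change of variable $d \longleftrightarrow 3d'$ inside the inner sum $S(X, f, f')$ of \eqref{definitionS3}. Since $3 \in \PP_3 \setminus \PP_3^*$, multiplication by $3$ realizes a bijection between the squarefree integers $d' \in \N_3$ with $3 \nmid d'$ and the squarefree integers $d \in \N_3$ with $3 \mid d$. Under this bijection, the weight $2^{\omega_3^*(d)} = 2^{\omega_3^*(d')}$ is preserved (because $3 \notin \PP_3^*$), we have ${\rm free}(d, 3) = d' = d/3$, and the coprimality condition $(d, \Delta(f)\Delta(f'))=1$ is preserved since $\Delta(f), \Delta(f') \in \N_3^*$ are coprime to $3$.

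I would first dispose of the six ``easy'' pairs \eqref{C2}--\eqref{C9}, \eqref{C3}--\eqref{C10}, \eqref{C4}--\eqref{C11}, \eqref{C5}--\eqref{C12}, \eqref{C6}--\eqref{C13} and \eqref{C7}--\eqref{C14}. In each of these one has $f(3) \neq 0$ or $f'(3) \neq 0$, so the preceding lemma gives $D(3d', f, f') = D(d', f, f')$. Hence the summation condition ${\rm free}(d, 3) \leq \bigl(X/D(d, f, f')\bigr)^{1/6}$ transports under the substitution $d = 3d'$ to $d' \leq \bigl(X/D(d', f, f')\bigr)^{1/6}$, which is precisely the corresponding condition for the $3 \nmid d'$ sum. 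The external factor $3^{|\,{\rm supp}_3 f\,\cup\,{\rm supp}_3 f'|} \cdot {\mathbbm 1}(f, f')$ in Definition~\ref{definitionHeis} is independent of $d$, and the set of admissible pairs $(f, f')$ is the same in both members of each pair. Thus the two Heisenberg sums coincide term by term.

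Second, I would handle the pair \eqref{C1}--\eqref{C8}, where the preceding lemma now reads $D(3d', f, f') = 3^{12} \cdot D(d', f, f')$ because $f(3) = f'(3) = 0$. Setting $d = 3d'$, the condition ${\rm free}(d, 3) \leq \bigl(X/D(d, f, f')\bigr)^{1/6}$ becomes
\[
d' \leq \bigl(X/(3^{12} \, D(d', f, f'))\bigr)^{1/6} = \bigl((3^{-12} X)/D(d', f, f')\bigr)^{1/6},
\]
which is the defining condition of $S(3^{-12}X, f, f')$ restricted to $3 \nmid d'$. Since the admissible pairs $(f, f')$ and the external weights in Definition~\ref{definitionHeis} are identical for \eqref{C1} and \eqref{C8}, we obtain ${\rm Heis}^{\eqref{C8}}(X) = {\rm Heis}^{\eqref{C1}}(3^{-12}X)$.

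No real obstacle is expected: the entire proof is an unpacking of the definitions combined with the preceding lemma. The only thing requiring verification is that every condition in the definition of $S(X, f, f')$ other than the divisibility by $3$ is invariant under $d \longleftrightarrow 3d'$, which follows from $\Delta(f), \Delta(f') \in \N_3^*$ and $3 \notin \PP_3^*$ as noted above.
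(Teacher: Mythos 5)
Your proof is correct and follows essentially the same route as the paper: the substitution $d \leftrightarrow 3d'$, the fact that $3\in\PP_3\setminus\PP_3^*$ preserves the weight $2^{\omega_3^*(d)}$ and the coprimality condition, ${\rm free}(3d',3)=d'$, and the preceding lemma on $D(3d,f,f')$ together give the claimed identities. The paper's proof is essentially a one-line appeal to these same observations; you have simply spelled out the bijection and the two cases of the lemma in more detail.
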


\noindent The first part of this proposition, combined with Proposition \ref{archetype}, shows that
$$
C^{\eqref{C8}} = 3^{ {-3}} H_0.
$$ 
Moreover the second part of  Proposition \ref{1494} reduces the proof of Proposition \ref{allthecases} to the study of six sums: ${\rm Heis}^{\eqref{C2}}(X) $,  ${\rm Heis}^{\eqref{C3}}(X)$, ${\rm Heis}^{\eqref{C4}}(X)$, ${\rm Heis}^{\eqref{C5}}(X)$,  ${\rm Heis}^{\eqref{C6}}(X)$ and ${\rm Heis}^{\eqref{C7}}(X)$.

\subsection{\texorpdfstring{Preparation of the functions $f$ and $f'$}{Preparation of the functions f and f'}}
In the six remaining sums, we remark that the prime $3$ belongs to ${\rm supp \, } f \cup {\rm supp \, }f'$. We generalize the decomposition \eqref{decompf*} as follows

\begin{equation}
\label{doubledecomposition}
\begin{cases}
f&= \eta \,\mathbbm 1_{\{3\}} \oplus f_0 \oplus f_1,\\
f'&= \eta'\, \mathbbm 1_{\{3\}} \oplus f'_0 \oplus f'_1,
\end{cases} 
\end{equation}

\begin{itemize}
\item where $\eta, \eta' \in \{0, 1, 2\}$, 
\item where  $\mathbbm{1}_{\{3\}}$ is defined in \S \ref{crucialsum}, 
\item where the functions $f_0$, $f'_0$, $f_1$ and $f'_1$ do not contain $3$ in their support, 
\item where we have ${\rm supp\, } f_0 = {\rm supp \, } f'_0 \ (:= \mathcal E_0)$, 
\item where the three sets $\mathcal E_1$ ($ := {\rm supp \,} f_1$),
$\mathcal E'_1$ ($:= {\rm supp\,} f'_1$) and $\mathcal E_0$ are disjoint.
\end{itemize}

\noindent This decomposition is unique and the definitions of $\Delta_0$, $\Delta_1$ and $\Delta'_1$ (see \eqref{decompDelta*}) remain valid. Observe that $\Delta_0\Delta_1\Delta'_1$ is never divisible by $3$. We now state a generalization of Lemma \ref{expand1}, which can be proven in the same way as Lemma \ref{expand1}.

\begin{lemma}
\label{expand2} 
Let $f, f'\in V$ decomposed as in \eqref{doubledecomposition}. We then have the equalities  
\begin{multline*}
\underset{(z,z')\in \F_3^2 \atop  f(r) z +f'(r) z' =0}{\sum \sum }
\bigl( \chi (zf+z'f') \bigr) (r) = \\
1+
\begin{cases} 
\chi (f') (r) +\chi (2f') (r) &\textup{ if } r\in \mathcal E_1,\\
\chi (f) (r) +\chi (2f) (r) &\textup{ if } r\in \mathcal E'_1,\\
\chi \bigl(f'_0 (r) f+ 2f_0(r)f' \bigr)(r) 
+ \chi \bigl(2f'_0 (r) f+ f_0 (r)f'\,\bigr) (r) &\textup{ if } r \in \mathcal E_0.\\
\end{cases}
\end{multline*}
\end{lemma}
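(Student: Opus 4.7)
The plan is to mimic the proof of Lemma~\ref{expand1} by direct case analysis on the position of $r$. Since $r \mid \Delta(f)\Delta(f') = \Delta_0 \Delta_1 \Delta'_1$ and the three integers $\Delta_0, \Delta_1, \Delta'_1$ are pairwise coprime and all coprime to $3$, the prime $r$ belongs to exactly one of the disjoint sets $\mathcal{E}_0, \mathcal{E}_1, \mathcal{E}'_1$, and in particular $r \neq 3$. In each case I will (i) determine the pairs $(z,z') \in \F_3^2$ satisfying $f(r)z + f'(r)z' = 0$, and (ii) evaluate $\chi(zf+z'f')(r)$ on this solution set.

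Because $r \neq 3$, we have $\mathbbm{1}_{\{3\}}(r) = 0$, so $f(r) = f_0(r) + f_1(r)$ and $f'(r) = f'_0(r) + f'_1(r)$; the components $\eta, \eta'$ at $3$ play no role in the linear constraint. If $r \in \mathcal{E}_1$, then $f(r) = f_1(r) \neq 0$ while $f'(r) = 0$, so the equation forces $z = 0$ and $z' \in \F_3$ is free; the three values $\chi(z' f')(r)$ are $1, \chi(f')(r), \chi(2f')(r)$ for $z' = 0, 1, 2$, whose sum is the announced expression. The case $r \in \mathcal{E}'_1$ is symmetric, with the roles of $f$ and $f'$ interchanged.

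The remaining case is $r \in \mathcal{E}_0$, where both $f_0(r)$ and $f'_0(r)$ are nonzero. The kernel of $(z, z') \mapsto f_0(r)z + f'_0(r)z'$ in $\F_3^2$ is the one-dimensional subspace $\{t(f'_0(r), 2f_0(r)) : t \in \F_3\}$ (using $-1 \equiv 2 \bmod 3$). Consequently $zf + z'f' = t(f'_0(r) f + 2 f_0(r) f')$, so $\chi(zf+z'f')(r)$ equals $1$, $\chi(f'_0(r) f + 2 f_0(r) f')(r)$ and $\chi(2f'_0(r) f + f_0(r) f')(r)$ for $t = 0, 1, 2$ respectively (using $4 \equiv 1 \bmod 3$ at $t=2$). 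Adding these three contributions yields the stated formula.

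No genuine obstacle arises, since the argument is a direct elaboration of Lemma~\ref{expand1}. The only bookkeeping point is to verify that the possible factor $\chi_3(r)^{\eta}$ hidden inside $\chi(f)(r)$ (and similarly for $f'$) does not interfere with the case analysis; this is automatic from the multiplicativity of $\chi(\cdot)(r)$ in its functional argument, so that, for example, $\chi(f')(r) = \chi_3(r)^{\eta'}\,\chi(f'_0+f'_1)(r)$, and this factor is simply carried along through the computation.
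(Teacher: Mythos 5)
Your proof is correct and follows exactly the approach the paper intends: the paper simply says Lemma~\ref{expand2} ``can be proven in the same way as Lemma~\ref{expand1},'' whose proof is to solve $f(r)z+f'(r)z'=0$ in each of the three cases, which is precisely your case analysis. Your explicit parametrization of the kernel in the case $r\in\mathcal E_0$ and the closing remark about the $\chi_3^\eta$ factor being carried along harmlessly (since $r\neq 3$ and the answer is stated directly in terms of $\chi(\cdot)(r)$) are just the bookkeeping the authors elide.
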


As a consequence of this lemma, we deduce that the triple product appearing at the end of \eqref{sumsumsumsumsumsumsum*} now has the shape
\begin{multline}\label{tripleprod}
\Pi (f,f') := \prod_{r\mid \Delta_0}\Bigl\{ 1+ \chi \bigl(f'_0 (r) f+ 2f_0(r)f' \bigr)(r) 
+ \chi \bigl(2f'_0 (r) f + f_0 (r)f'\,\bigr) (r)\Bigr\}\\
\times \prod_{r\mid \Delta_1}\bigl\{ 1+\chi (f') (r) +\chi (2f') (r) \bigr\}
\prod_{r \mid \Delta'_1} \bigl\{ 1 +   \chi (f) (r) +\chi (2f) (r)    \bigr\}.
\end{multline}
As in \S \ref{decompU(X,d)}, we write this product in a schematic way as
$$
\Pi (f,f') = \prod_{r \mid \Delta_0}\{ \cdots\}\prod_{r\mid \Delta_1} \{ \cdots\} \prod_{r \mid \Delta'_1} \{ \cdots \}.
$$
In the six sums, that we will study below, the main term will correspond  to the contribution of the subproduct $\Pi^{\rm{mt}}(f,f')$ of $\Pi (f,f')$ defined by 
\begin{equation}
\label{Pimt}
\Pi^{\rm{mt}}(f,f'):=\prod_{r \mid \Delta'_1} \{ \cdots\}, 
\end{equation}
while the complementary product $\Pi^{\rm {et}} (f,f')$, defined by
\begin{equation*}
\Pi^{\rm{et}}(f,f') : = \Bigl( -1 + \prod_{r \mid \Delta_0} \{ \cdots \} \prod_{r\mid \Delta_1}\{ \cdots \} \Bigr) \, \prod_{r\mid \Delta'_1} \{ \cdots \},
\end{equation*}
is absorbed in the error term after summation over $d$, $\Delta_0$, $\Delta_1$, $\Delta'_1$, $f$, $f'$. 

\subsection{\texorpdfstring{Study of ${\rm Heis}^{\eqref{C2}}(X)$}{Study of Heis(X), I}}
\label{sectionC2}
In this case we have $\mu(f, f',d) = 3^8$ which incites to compare ${\rm Heis}^{\eqref{C2}} ( X)$ with ${\rm Heis}^* (X/3^8)$. By \eqref{C2}, we need to impose three conditions on the functions $f$ and $f'$ that we decompose as in \eqref{doubledecomposition}. The first condition is $f(3)= \eta = 0$ and is equivalent to $f \in V^*$. The second condition $f'(3) \neq 0$ (i.e. $\eta' =1$ or $2$) does not affect the treatment of the error terms $\Pi^{{\rm et}} (f,f')$. More precisely, we separate the cases $\eta'=1$ and $\eta'=2$. Then we follow the technique used in \S \ref{StudyofET},  which benefits, after some preparation, from the oscillation of a non principal Dirichlet character (with modulus less than some fixed power of $\LL$). Then we obtain an analogue of Proposition \ref {ETissmall*}. 

To deal with the contribution of the main term $\Pi^{\rm mt}(f,f')$ defined in \eqref{Pimt}, we use the decomposition \eqref{doubledecomposition} of $f'$. This means that in \eqref{975*}, we have to introduce an extra summation over $\eta' \in \{1, 2\}$. Gathering these remarks, taking care of the third condition $\chi (f) (3)=1$ in \eqref{C2} and appealing to the definition \eqref{defH1} of $H_1$, we conclude that

\begin{proposition}
\label{PropforC2}
Uniformly for $X\geq 2$, one has the equality
\begin{equation*}
{\rm Heis}^{\eqref{C2}} (X) = 2^{-1} \cdot 3^{-5} \alpha_3    \, H_1\,  X^{1/4}+O (X^{1/4} \LL^{-1}).
\end{equation*}
\end{proposition}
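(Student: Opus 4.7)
The strategy is to adapt the proof of Proposition \ref{archetype} to the restrictions imposed by condition \eqref{C2}, namely $f(3)=0$, $f'(3)\neq 0$, $\chi(f)(3)=1$, and $3\nmid d$. First I would repeat the preparatory reductions of \S 2.1 essentially verbatim: the crude bounds of Propositions \ref{largeDelta}, \ref{smallDelta'}, and \ref{largeomega} depend on the polynomial factor $\mu$ only through the inequality $D(d,f,f')\leq X$, and here $\mu(f,f',d)=3^8$ is a harmless constant. This reduces matters to pairs $(f,f')$ with $1<\Delta(f)\leq \LL^{A_0}$, $\Delta(f')\geq X^{1/4}\LL^{-A_2}$, and $\omega(\Delta(f'))\leq A_3\log\log X$.

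Next I would decompose $f$ and $f'$ as in \eqref{doubledecomposition} with $\eta=0$ (forced by $f(3)=0$) and $\eta'\in\{1,2\}$ (forced by $f'(3)\neq 0$), apply Lemma \ref{expand2} to expand $\mathbbm{1}(f,f')$, and split the resulting contribution as $U(X,d)={\rm MT}(X,d)+{\rm ET}(X,d)$ through the factorization \eqref{Pimt}. Crucially, the ``main'' factor $\Pi^{\rm mt}(f,f')=\prod_{r\mid\Delta'_1}\{1+\chi(f)(r)+\chi(2f)(r)\}$ depends only on $f$, being insensitive to $\eta'$ or $(f'_0,f'_1)$.

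For the error term ${\rm ET}(X,d)$, I would imitate \S \ref{StudyofET}, separating the two cases $\eta'=1$ and $\eta'=2$. The only new feature is an additional fixed factor $\chi_3^{\eta'}$ coming from the expansion of $\chi(f')$; since $\chi_3$ has modulus $9$, when we factor out the largest prime divisor $p_\infty$ of $\Delta'_1$ and invoke Lemma \ref{sumchi(pi)**} the modulus of the resulting non-trivial character is enlarged by at most a factor of $9$, which is absorbed into the Siegel--Walfisz error term. The analogue of Proposition \ref{ETissmall*} therefore holds with the same saving.

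For the main term ${\rm MT}(X,d)$, I would repeat the calculation of \S \ref{StudyofMT(X)*}, collecting three modifications. Firstly, the free summation over $\eta'\in\{1,2\}$ contributes a multiplicative factor $2$. Secondly, the restriction $\chi(f)(3)=1$ restricts the inner sum $\sum_{f\in V^*(\Delta)}$ accordingly, so that the Euler-product computation culminates in $H_1$ rather than $H_0$. Thirdly, the factor $\mu(f,f',d)=3^8$ inside $D(d,f,f')$ rescales the upper bound in the application of Proposition \ref{complexanalysis*} to the sum over $d_0$, producing an extra factor $3^{-8/4}=3^{-2}$ in the leading term. Combining these three effects and multiplying by the normalizing prefactor $2^{-2}\cdot 3^{-3}\alpha_3$ yields the announced asymptotic with leading constant $2^{-1}\cdot 3^{-5}\alpha_3 H_1$, matching $C^{\eqref{C2}}=2\cdot 3^{-2}\cdot H_1$ from Proposition \ref{allthecases}. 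The only delicate point is verifying that the Siegel--Walfisz cancellation in the error-term analysis is not spoiled by the fixed $\chi_3^{\eta'}$ factor; this is a routine bounded-modulus check already covered by Lemma \ref{sumchi(pi)**}.
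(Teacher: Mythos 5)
Your proposal follows essentially the same route as the paper: the constant $\mu(f,f',d)=3^8$ is absorbed by comparing with ${\rm Heis}^*(X/3^8)$, yielding $(X/3^8)^{1/4}=3^{-2}X^{1/4}$; the summation over $\eta'\in\{1,2\}$ contributes a factor $2$; and the restriction $\chi(f)(3)=1$ replaces $H_0$ by $H_1$, giving $2^{-2}3^{-3}\alpha_3\cdot 2\cdot 3^{-2}\cdot H_1 = 2^{-1}3^{-5}\alpha_3 H_1$. One small inaccuracy in your error-term discussion: the factors $\chi_3^{\eta'}(r)$ arising from $\chi(f')$ are evaluated at the fixed primes $r\mid\Delta_0\Delta_1$ and so are constants of modulus $1$ not depending on $p_\infty$; they therefore do not enlarge the modulus of the character over $\Z[j]$ in $\pi_\infty$ at all, and Lemma \ref{sumchi(pi)**} applies verbatim.
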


\subsection{\texorpdfstring{Study of ${\rm Heis}^{\eqref{C3}} (X)$}{Study of Heis(X), II}}  
We now have $\mu (f, f',d) = 3^{12}$,  which incites to compare ${\rm Heis}^{\eqref{C3}} (X)$ with ${\rm Heis}^* (X/3^{12})$. Furthermore, as in \S \ref{sectionC2} we have $\eta = 0$ and $\eta' \in \{1, 2\}$. Following the proof of Proposition \ref{PropforC2}, we get
\begin{equation*}
{\rm Heis}^{\eqref{C3}} (X) = 2^{-1} \cdot 3^{-6} \alpha_3    \, H'_1\,  X^{1/4}+O (X^{1/4} \LL^{-1})
\end{equation*}
with 
\begin{multline*} 
H'_1 := \sum_{\Delta \in \N_3^*\atop \Delta >1} 
\lambda  (\Delta)\,  \psi_3 (\Delta)\cdot \frac{  {3}^{\omega (\Delta)}}{\Delta^{3/2}} 
\sum_{f\in V^* (\Delta)\atop \chi (f) (3) =j,\, j^2} \\
\Bigr\{\, \prod_{p\in \PP_3^*} \, \Bigl( \, 1 +2 \frac{ \chi (f) (p) +\chi (2f) (p)}{p+2}+\frac 2{p^{1/2}(p+2)} \Bigr)\,\Bigr\}.
\end{multline*}
Applying \eqref{chi(f)=0or} and returning to the definitions of $H_0$ and $H_1$ (see \eqref{defH0} and \eqref{defH1}), we trivially have the equality
$$
H_1 + H'_1 = H_0.
$$
So we proved the following

\begin{proposition}
\label{PropforC3}
Uniformly for $X\geq 2$, one has the equality
\begin{equation*}
{\rm Heis}^{\eqref{C3}} (X) = 2^{-1} \cdot 3^{-6} \alpha_3 \, (H_0 -H_1)\,  X^{1/4} + O (X^{1/4} \LL^{-1}).
\end{equation*}
\end{proposition}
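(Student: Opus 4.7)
The plan is to adapt the argument used to prove Proposition \ref{PropforC2} for ${\rm Heis}^{\eqref{C2}}(X)$, since the conditions defining ${\rm Heis}^{\eqref{C3}}(X)$ differ only in two local modifications at the prime $3$: the value of $\mu(f,f',d)$ jumps from $3^8$ to $3^{12}$, and the condition $\chi(f)(3) = 1$ is replaced by $\chi(f)(3) \in \{j, j^2\}$. Since $f(3) = 0$ and $f'(3) \neq 0$ as in condition \eqref{C2}, in the decomposition \eqref{doubledecomposition} we still have $\eta = 0$ (that is, $f \in V^*$) and $\eta' \in \{1, 2\}$.

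First, I would apply Lemma \ref{expand2} to the indicator $\mathbbm{1}(f,f')$ to get the product $\Pi(f, f')$ of \eqref{tripleprod}, and split it as $\Pi^{\rm mt}(f,f') + \Pi^{\rm et}(f,f')$ via \eqref{Pimt}. For the error term $\Pi^{\rm et}(f,f')$, I would separate the two cases $\eta' = 1$ and $\eta' = 2$ and run the Siegel--Walfisz argument from \S\ref{StudyofET} verbatim. The cancellation still comes from the sum over the largest prime $p_\infty$ dividing $\Delta'_1$ via Lemma \ref{sumchi(pi)**}, since the extra restriction $\chi(f)(3) \in \{j, j^2\}$ is local at $3$ and does not affect the non-triviality or the conductor of the induced character on $\Z[j]$. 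This yields an analogue of Proposition \ref{ETissmall*} for the present sum.

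Second, I would extract the main term by following \S\ref{StudyofMT(X)*}. Three modifications enter the bookkeeping compared with Proposition \ref{archetype}: (i) the constant $\mu = 3^{12}$ effectively replaces $X$ by $X/3^{12}$, producing a factor $(3^{12})^{-1/4} = 3^{-3}$ in the leading coefficient; (ii) the summation over $\eta' \in \{1, 2\}$ contributes a factor of $2$; (iii) the restriction $\chi(f)(3) \in \{j, j^2\}$ constrains the sum over $f \in V^*(\Delta)$ accordingly. Combining these with the proof of Proposition \ref{archetype}, I obtain
\[
{\rm Heis}^{\eqref{C3}}(X) \;=\; 2^{-1} \cdot 3^{-6} \, \alpha_3 \, H'_1 \, X^{1/4} + O\bigl(X^{1/4} \LL^{-1}\bigr),
\]
where $H'_1$ has the definition written just above the statement of the proposition, namely the same expression as $H_1$ in \eqref{defH1} but with $\chi(f)(3) = 1$ replaced by $\chi(f)(3) \in \{j, j^2\}$.

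The remaining step is to identify $H'_1 = H_0 - H_1$. This is immediate from the definitions \eqref{defH0} and \eqref{defH1}: by \eqref{chi(f)=0or}, every $f \in V^*(\Delta)$ with $\Delta > 1$ (and $3 \nmid \Delta$) satisfies $\chi(f)(3) \in \{1, j, j^2\}$, so the sum defining $H_0$ splits as $H_0 = H_1 + H'_1$, giving the claimed constant. The main obstacle is not analytic but purely clerical: tracking the correct powers of $3$ and the factor of $2$ coming from $\eta' \in \{1,2\}$; no new ingredient beyond Proposition \ref{archetype} and Lemma \ref{expand2} is required.
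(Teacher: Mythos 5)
Your proposal is correct and follows essentially the same route as the paper: reduce to the Proposition \ref{PropforC2} argument, account for the factor $(3^{12})^{-1/4}=3^{-3}$ from $\mu=3^{12}$ and the factor $2$ from $\eta'\in\{1,2\}$, obtain the constant with $H'_1$ (sum restricted to $\chi(f)(3)\in\{j,j^2\}$), and then observe via \eqref{chi(f)=0or} that $H_0=H_1+H'_1$ since $3\nmid\Delta(f)$ forces $\chi(f)(3)\in\{1,j,j^2\}$. The bookkeeping and the closing identity both match the paper's proof exactly.
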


\subsection{\texorpdfstring{Study of ${\rm Heis}^{\eqref{C4}} (X)$}{Study of Heis(X), III}} 
\label{2116}
In this case we have 
\begin{equation}
\label{mu=312}
\mu (f, f',d) = 3^{12}.
\end{equation}
By the conditions \eqref{C4}, we know that in  the decomposition \eqref{doubledecomposition}, we have $\eta \in \{1, 2\}$ and $\eta' = 0$. Furthermore  the functions $f$ and $f'$ are linearly independent if and only if $\Delta (f) \geq 1$ and $\Delta (f') >1$.  Since $\chi (f') (3) \not=0$  (see \eqref{chi(f)=0or})  we detect the condition $\chi (f')(3) = 1$ by the sum
$$
\frac 13 \Bigl( 1 + \chi (f')(3)+ \chi (2f') (3)\Bigr),
$$
and this factor is easily integrated in the second product on the right--hand side of \eqref{tripleprod} by replacing the product over $r \mid \Delta_1$ by $r \mid 3 \Delta_1$. This extra factor causes no new difficulty in the treatment of the error term: one follows the method explained in \S \ref{sectionC2}. 

The treatment of the main term requires more care. Up to some error in $O( X^{1/4} \LL^{-1})$ the main term has the shape (compare with \eqref{975*})
$$
2^{-2} 3^{-4} \sum_{d\in \N_3^*\atop d\leq \LL^{A_4}}2^{\omega (d)}   \sum_{(\eta, \eta')  \in \{(1,0), (2,0)\}}  \underset {\Delta_0,\ \Delta_1,\ \Delta'_1}{ \sum \ \sum \ \sum }\quad    \underset{\substack{f _0,\, f'_0 \in V^* (\Delta_0)\\  f_1\in
V^* (\Delta_1)\\  f'_1 \in V^* (\Delta'_1)}}{ \sum\ \sum \ \sum \ \sum }\, 
\Pi^{\rm mt} (f,f'),
$$
where
\begin{itemize}
\item we use the notations of \eqref{doubledecomposition},
\item the conditions  of summations are given by \eqref{finalconditions*}, but with $X$ replaced by $X/3^{12}$ (consequence of \eqref{mu=312}).
\end{itemize}
When we expand the product over $r\mid \Delta'_1$ appearing in the definition \eqref{Pimt} we have the following analogue of \eqref{prod->sum}
$$
\Pi^{\rm mt}(f,f')= 
\prod_{r\mid \Delta'_1} \bigl\{ \cdots    \bigr\}= \underset{d_0d_1d_2 =\Delta'_1}{ \sum\sum \sum}\, \chi (f_0+f_1 +\eta \mathbbm 1_{\{3\}}) (d_1)
\chi (2(f_0+f_1 +\eta \mathbbm 1_{\{3\}}) )(d_2)
$$
(we recall that  $\eta \in \{1, 2\}$). We now write $f = f_0 + f_1$ to mimic the notations used in \S \ref{StudyofMT(X)*} and we follow the method given in that section. By the definition \eqref{defH2}, we finally arrive at 

\begin{proposition}
\label{PropforC4}
Uniformly for $X\geq 2$, one has the equality
\begin{equation*}
{\rm Heis}^{\eqref{C4}} (X) = 2^{-2} 3^{-7} \alpha_3 \,    H_2\,  X^{1/4}+O (X^{1/4} \LL^{-1}).
\end{equation*}
\end{proposition}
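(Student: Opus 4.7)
The plan is to follow the blueprint of Proposition \ref{archetype}, making three adjustments forced by the conditions \eqref{C4}. First, since $\mu(f,f',d) = 3^{12}$ is constant throughout the range of summation in case \eqref{C4}, the inequality in the definition of $S(X,f,f')$ becomes \eqref{finalconditions*} with $X$ replaced by $X/3^{12}$. As the archetypical sum is of order $X^{1/4}$, this shrinkage will contribute a factor of $3^{-3}$ to the leading constant.

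Second, using the decomposition \eqref{doubledecomposition}, the conditions $f(3)\neq 0$ and $f'(3)=0$ translate to $\eta \in \{1,2\}$ and $\eta'=0$. With this choice, $f$ and $f'$ are automatically linearly independent as soon as $\Delta(f')>1$, since $3\in {\rm supp}(f)\setminus {\rm supp}(f')$. The remaining condition $\chi(f')(3)=1$ is detected by inserting the factor $\tfrac{1}{3}\bigl(1+\chi(f')(3)+\chi(2f')(3)\bigr)$; inspection of \eqref{tripleprod} shows that this is equivalent to extending the product $\prod_{r\mid \Delta_1}\{\cdots\}$ to $\prod_{r\mid 3\Delta_1}\{\cdots\}$. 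The factor $\tfrac{1}{3}$ supplies the extra $3^{-1}$ distinguishing the prefactor $2^{-2}3^{-4}$ (displayed in \S \ref{2116}) from the archetype's $2^{-2}3^{-3}$.

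Third, the error term $\Pi^{{\rm et}}(f,f')$ is handled exactly as in the archetypical case: one factorizes $\Delta'_1 = \Delta''_1 p_\infty$ and exploits cancellation in the sum over the large prime $p_\infty$ via the Siegel--Walfisz-type bound of Lemma \ref{sumchi(pi)**}. The additional character factor at $r=3$ contributes only a multiplier of bounded conductor and does not interfere with the non-triviality of the character $\widetilde{M}$ established in Lemma \ref{nontrivial**}. For the main term one has
\[
\Pi^{{\rm mt}}(f,f') = \prod_{r\mid \Delta'_1}\bigl\{1 + \chi(f+\eta\mathbbm{1}_{\{3\}})(r) + \chi(2f+2\eta\mathbbm{1}_{\{3\}})(r)\bigr\},
\]
writing $f = f_0 + f_1$ and noting that $\chi(\eta\mathbbm{1}_{\{3\}})(r) = \chi_3^\eta(r)$ for $r\neq 3$. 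One expands this product as a triple sum over $d_0 d_1 d_2 = \Delta'_1$, restricts to $d_1,d_2 \leq X^{1/100}$ via Siegel--Walfisz as in \S \ref{StudyofMT(X)*}, applies Proposition \ref{complexanalysis*} with $\ell=3$ to evaluate the sum over $d_0$, and assembles the resulting Euler products. Summing over $\eta \in \{1,2\}$ yields precisely the double sum $\sum_{\eta=1,2}\prod_{p\in \PP_3^*}(\cdots)$ appearing in \eqref{defH2}, with the range $\Delta \geq 1$ (since $f=0$ is now admissible, the $\eta$-twist alone ensuring $f\neq 0$).

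Collecting the numerical factors from Definition \ref{definitionHeis} ($2^{-2}3^{-3}$), from the detection of $\chi(f')(3)=1$ ($3^{-1}$), and from the scaling $X \mapsto X/3^{12}$ ($3^{-3}$), together with the constant $\alpha_3$ from Proposition \ref{complexanalysis*}, produces the claimed main term $2^{-2}3^{-7}\alpha_3\, H_2\, X^{1/4}$. The main obstacle is simply the careful bookkeeping of these power-of-$3$ factors arising from three independent sources; no analytic ingredient beyond what was developed for the archetypical case is required.
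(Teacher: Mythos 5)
Your proposal is correct and follows essentially the same route as the paper: you identify $\mu(f,f',d)=3^{12}$, set $\eta\in\{1,2\}$, $\eta'=0$ via the decomposition \eqref{doubledecomposition}, detect $\chi(f')(3)=1$ by the factor $\tfrac13(1+\chi(f')(3)+\chi(2f')(3))$ absorbed into the $\prod_{r\mid 3\Delta_1}$ product, handle the error term by the same Siegel--Walfisz argument, expand $\Pi^{\rm mt}$ over $d_0d_1d_2=\Delta'_1$ and apply Proposition \ref{complexanalysis*}, and your bookkeeping of the power-of-$3$ factors ($2^{-2}3^{-3}$, $3^{-1}$, $3^{-3}$) correctly recovers $2^{-2}3^{-7}\alpha_3 H_2$. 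The observations about linear independence reducing to $\Delta(f')>1$ with $\Delta(f)\geq 1$ admissible, and about $\sum_{\eta=1,2}$ producing the inner sum in \eqref{defH2}, are exactly what the paper uses in \S\ref{2116}.
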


\subsection{\texorpdfstring{Study of ${\rm Heis}^{\eqref{C5}}(X)$}{Study of Heis(X), IV}}
We now have 
\begin{equation}
\label{mu=316}
\mu (f, f',d) = 3^{16},
\end{equation}
$\eta \in \{1, 2\}$ and $\eta' = 0$. By \eqref{chi(f)=0or}, the event $\chi (f')(3) \in \{j, j^2\}$  is complementary to the event $\chi (f') (3) = 1$ treated in \S \ref{2116}. We detect the condition $\chi (f') (3) = j$ and the condition $\chi (f') = j^2$, by the respective indicators
\begin{equation}\label{twopossibilities}
\frac 13 \Bigl( 1 + j^2 \, \chi (f') (3) + j \, \chi (f') (3)\Bigr) \text{ and } \frac 13 \Bigl( 1 + j \, \chi (f') (3) + j ^2\, \chi (f') (3)\Bigr),
\end{equation}
which can also be incorporated in the right--hand side of \eqref{tripleprod} by  replacing the product over $r \mid \Delta_1$ by $r \mid 3\Delta_1$.   We  now follow the proof of Proposition \ref{PropforC4}. By taking into account the value of $\mu (f,f',d)$ given in \eqref{mu=316} and the two cases listed in \eqref{twopossibilities}, we complete the proof of 

\begin{proposition}
\label{PropforC5}
Uniformly for $X\geq 2$, one has the equality
\begin{equation*}
{\rm Heis}^{\eqref{C5}} (X) = 2^{-1} \cdot 3^{-8} \alpha_3\, H_2\,  X^{1/4}+O (X^{1/4} \LL^{-1}).
\end{equation*}
\end{proposition}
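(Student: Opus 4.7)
The plan is to mimic the proof of Proposition \ref{PropforC4}, adjusting only the two features that distinguish condition \eqref{C5} from \eqref{C4}: the value of $\mu(f,f',d)$, which is now $3^{16}$, and the local condition at $3$, which now requires $\chi(f')(3) \in \{j, j^2\}$ instead of $\chi(f')(3) = 1$. Both changes fit directly into the framework already built for ${\rm Heis}^{\eqref{C4}}(X)$.

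First, I would use the decomposition \eqref{doubledecomposition} exactly as in \S\ref{2116}: the conditions $f(3) \neq 0$ and $f'(3) = 0$ force $\eta \in \{1,2\}$ and $\eta' = 0$, and linear independence of $f$ and $f'$ is automatic as soon as $\Delta(f) \geq 1$ and $\Delta(f') > 1$ since the supports are then disjoint at $3$. Because $3 \notin {\rm supp}\,f'$, by \eqref{chi(f)=0or} we have $\chi(f')(3) \in \{1, j, j^2\}$, so the two sub-conditions $\chi(f')(3) = j$ and $\chi(f')(3) = j^2$ can be detected by the two indicators in \eqref{twopossibilities}. As observed by the authors, each such indicator is a sum of three characters that can be absorbed into the product on the right-hand side of \eqref{tripleprod} by enlarging the product from $r \mid \Delta_1$ to $r \mid 3\Delta_1$. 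This step causes no new difficulty for the error term: exactly as in \S\ref{sectionC2}, the extra factor at $r=3$ is a non-principal cubic character of bounded conductor and the argument giving the analogue of Proposition \ref{ETissmall*} applies verbatim.

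For the main term, the two cases $\chi(f')(3) = j$ and $\chi(f')(3) = j^2$ each contribute a copy of the sum treated in Proposition \ref{PropforC4}. Indeed, the only way the indicator \eqref{twopossibilities} interacts with $\Pi^{\mathrm{mt}}(f,f')$ from \eqref{Pimt} is through the replacement of $\chi(f)$ and $\chi(2f)$ by $\chi(f+\eta\mathbbm{1}_{\{3\}})$ and $\chi(2f + 2\eta\mathbbm{1}_{\{3\}})$ for $\eta \in \{1,2\}$, which is precisely the structure already present in the definition \eqref{defH2} of $H_2$. The manipulations of \S\ref{StudyofMT(X)*} then apply: one expands $\Pi^{\mathrm{mt}}(f,f')$ into a triple convolution over $d_0 d_1 d_2 = \Delta'_1$, invokes Siegel--Walfisz to restrict $d_1, d_2 \leq D_0$, applies Proposition \ref{complexanalysis*} to evaluate the $d_0$-sum, and finally recognizes the resulting Euler products as $H_2$.

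Tracking the constants is the last bookkeeping step. The factor $\mu(f,f',d) = 3^{16}$ replaces the argument $X$ by $X/3^{16}$ in the analogue of \eqref{perronapplied}, producing $(3^{16})^{-1/4} = 3^{-4}$ in place of the factor $3^{-3}$ obtained in Proposition \ref{PropforC4}, i.e.\ an extra $3^{-1}$. The sum over the two cases $\chi(f')(3) \in \{j, j^2\}$ multiplies the result by $2$. Starting from the coefficient $2^{-2}\cdot 3^{-7}$ of Proposition \ref{PropforC4} and applying the correction factor $2 \cdot 3^{-1}$ yields $2^{-1}\cdot 3^{-8}$, matching the stated leading constant. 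The hard part is genuinely only bookkeeping; no new oscillation estimate, no new complex-analytic input, and no new algebraic lemma arises beyond what has already been established for condition \eqref{C4}.
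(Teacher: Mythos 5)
Your proposal is correct and follows exactly the same route as the paper's own (very terse) proof: restrict to $\eta\in\{1,2\}$, $\eta'=0$, detect each of the two sub-conditions $\chi(f')(3)=j$ and $\chi(f')(3)=j^2$ by the respective indicators in \eqref{twopossibilities}, absorb them into the $\Delta_1$-product, run the machinery of Proposition \ref{PropforC4}, and track the constants: the factor of $2$ from the two sub-cases and the factor $3^{-1}$ from replacing $\mu = 3^{12}$ by $\mu = 3^{16}$. One small inaccuracy in the write-up: the $\eta$-dependence that produces the $H_2$ structure in $\Pi^{\rm mt}(f,f')$ (i.e.\ the replacement of $\chi(f_0+f_1)$ by $\chi(f_0+f_1+\eta\mathbbm 1_{\{3\}})$ in the expansion over $d_0d_1d_2 = \Delta'_1$) comes from the decomposition \eqref{doubledecomposition} of $f$ with $f(3)=\eta\neq 0$, not from the indicators \eqref{twopossibilities} --- those involve $\chi(f')(3)$ and get absorbed into the $\Delta_1$/error-term side; this does not change your conclusion.
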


\subsection{\texorpdfstring{Study of ${\rm Heis}^{\eqref{C6}}(X)$}{Study of Heis(X), V}} 
We now have
\begin{equation}
\label{mu=312again}
\mu (f, f',d) = 3^{12}
\end{equation}
and $\eta, \eta' \in \{1, 2\}$. This condition implies that 
$$
\chi (f' (3)\cdot f + 2 f(3) \cdot f')(3) \neq 0 
$$ 
by \eqref{convention}. We detect the equality $\chi (f' (3)\cdot f + 2 f(3) \cdot f')(3) = 1$ by the sum
$$
\frac 13 \Bigl( 1 +  \chi (f' (3)\cdot f + 
2 f(3) \cdot f')(3) +  \chi (2f' (3)\cdot f + 
f(3) \cdot f')(3)\Bigr),
$$
which is easily inserted in the first product on the right--hand side of \eqref{tripleprod} by changing the product $\prod_{r\mid \Delta_0}$ to $\prod_{r \mid 3\Delta_0}$. The treatment of the error term is the same as for the archetype sum. For the main term we take into account the four values $(\eta, \eta') \in \{ 1, 2\}^2$ and the value of $\mu$ given in \eqref{mu=312again}. Following the method leading to Proposition \ref{PropforC5} we arrive at 

\begin{proposition}
\label{PropforC6}
Uniformly for $X\geq 2$, one has the equality
\begin{equation*}
{\rm Heis}^{\eqref{C6}} (X) = 2^{-1} \cdot 3^{-7} \alpha_3\, H_2\,  X^{1/4}+O (X^{1/4} \LL^{-1}).
\end{equation*}
\end{proposition}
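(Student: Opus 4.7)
The plan mirrors the proofs of Propositions \ref{PropforC4} and \ref{PropforC5}, adapted to the conditions \eqref{C6}. Since $\mu(f, f', d) = 3^{12}$ in this case, the natural comparison is with ${\rm Heis}^*(X/3^{12})$, which contributes a multiplicative factor $3^{-3}$ to the leading term in $X^{1/4}$. Both $f(3)$ and $f'(3)$ are nonzero, so in the decomposition \eqref{doubledecomposition} we have $(\eta, \eta') \in \{1, 2\}^2$, giving four subcases to handle.

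The extra constraint $\chi(f'(3) \cdot f + 2 f(3) \cdot f')(3) = 1$ is detected by the indicator
$$
\frac{1}{3}\Bigl(1 + \chi(f'(3) \cdot f + 2 f(3) \cdot f')(3) + \chi(2 f'(3) \cdot f + f(3) \cdot f')(3)\Bigr),
$$
which is well-defined because $(f'(3) \cdot f + 2 f(3) \cdot f')(3) = 3 f(3) f'(3) \equiv 0 \bmod 3$, so $3$ is not in the support of $f'(3) \cdot f + 2 f(3) \cdot f'$ and the character values inside the sum are genuine cube roots of unity. As explained in the discussion preceding Proposition \ref{PropforC6}, this indicator fits naturally into the first product on the right-hand side of \eqref{tripleprod} by changing $\prod_{r\mid\Delta_0}$ to $\prod_{r\mid 3\Delta_0}$, with the factor at $r = 3$ being the direct analogue of the $r \in \mathcal{E}_0$ formula of Lemma \ref{expand2}.

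The error term $\Pi^{\rm et}(f, f')$ is controlled exactly as in \S \ref{StudyofET}: the bounded additional factor at $r = 3$ does not affect the Siegel--Walfisz cancellation along the large prime divisor $p_\infty$ of $\Delta'_1$ provided by Lemma \ref{sumchi(pi)**}, and one obtains an analogue of Proposition \ref{ETissmall*}. For the main term $\Pi^{\rm mt}(f, f') = \prod_{r \mid \Delta'_1}\{1 + \chi(f)(r) + \chi(2f)(r)\}$, we follow \S \ref{StudyofMT(X)*}: expanding the product as a triple sum over $d_0 d_1 d_2 = \Delta'_1$, applying Proposition \ref{complexanalysis*} to the $d_0$--sum, and exploiting the multiplicative structure yields the Euler product that appears in the definition \eqref{defH2} of $H_2$, with $f$ of the shape $g \oplus \eta \mathbbm{1}_{\{3\}}$ for $g \in V^*$.

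The last step is to tally the combinatorial factors. The four values $\chi(\eta' f + 2\eta f')(3)$ indexed by $(\eta, \eta') \in \{1,2\}^2$ satisfy a simple algebraic relation (since the four functions $\eta' f + 2\eta f'$ reduce to two pairs of scalar multiples modulo $3$), which collapses the sum over $(\eta, \eta')$ weighted by the indicator into twice the $\eta$--sum appearing in $H_2$. Combining the factor $2$ coming from this collapse with the prefactor $2^{-2} \cdot 3^{-3}$ from \eqref{expressionHeis}, the scaling $3^{-3}$ from $\mu = 3^{12}$, and the factor $3^{-1}$ from the indicator gives $2^{-1} \cdot 3^{-7} \alpha_3 H_2 X^{1/4}$. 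The main obstacle is the combinatorial bookkeeping of the four $(\eta, \eta')$ cases, but the underlying structure is cleaner than it looks: once one writes $\chi(\eta' f + 2\eta f')(3)$ in terms of $\chi(f)(3)$ and $\chi(f')(3)$, the collapse is automatic.
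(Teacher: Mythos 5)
Your overall strategy is correct and follows the paper's proof closely: insert the indicator $\tfrac13\bigl(1+\chi(\cdots)(3)+\chi(\cdots)(3)\bigr)$ as the $r=3$ factor of the $\Delta_0$--product, treat the error term by the Siegel--Walfisz argument of \S\ref{StudyofET}, and apply Proposition \ref{complexanalysis*} to the main-term product $\Pi^{\rm mt}$. Your observation that $\bigl(f'(3)f+2f(3)f'\bigr)(3)=3f(3)f'(3)\equiv 0\pmod 3$, so that $3$ is never in the support of $f'(3)f+2f(3)f'$ and the indicator is well-defined, is a nice (correct) justification which the paper leaves implicit; and your final tally of constants is right.

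However, the justification you give for the factor of $2$ is not correct. You attribute it to the ``collapse'' coming from the algebraic relation that the four functions $\eta'f+2\eta f'$ for $(\eta,\eta')\in\{1,2\}^2$ fall into the two pairs of scalar multiples $\{f+2f',\,2(f+2f')\}$ and $\{f+f',\,2(f+f')\}$, and claim this makes the indicator-weighted $(\eta,\eta')$-sum equal to twice the $\eta$-sum of $H_2$. This pairing is real and the indicator is indeed constant on each pair (since $\chi(2g)(3)=\chi(g)(3)^2$ and $z^2=1\iff z=1$ for cube roots of unity), but within each pair both values of $\eta$ occur, so the weighted sum factors as $(\mathbbm 1_{11}+\mathbbm 1_{12})\bigl(\Pi^{\rm mt}(1)+\Pi^{\rm mt}(2)\bigr)$, and $\mathbbm 1_{11}+\mathbbm 1_{12}$ takes values in $\{0,1,2\}$ depending on $\chi(f)(3),\chi(f')(3)$ --- it is not identically $2$. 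The actual source of the factor $2$ is simpler and different: once the indicator is expanded as $\tfrac13(1+\cdots)$ and only the ``$1$'' part survives into the main term (the $\chi(\cdots)(3)$ parts are absorbed into $\Pi^{\rm et}$ and killed by oscillation), the remaining main-term product $\Pi^{\rm mt}(f,f')=\prod_{r\mid\Delta'_1}\bigl\{1+\chi(f)(r)+\chi(2f)(r)\bigr\}$ depends only on $f$, hence only on $\eta$, not on $\eta'$; summing trivially over $\eta'\in\{1,2\}$ gives the factor $2$, and $\tfrac13\cdot 2\cdot 2^{-2}3^{-3}\cdot 3^{-3}=2^{-1}3^{-7}$. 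The constants come out the same, which is why your final answer is right, but the intermediate justification should be replaced.
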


\subsection{\texorpdfstring{Study of ${\rm Heis}^{\eqref{C7}}(X)$}{Study of Heis(X), VI}}
In our final case $\mu (f, f',d)$ satisfies \eqref{mu=316}. The proof mimics what was done for ${\rm Heis}^{\eqref{C6}} (X)$ since we also have $\eta, \eta' \in \{1, 2\}$. To detect the last condition of \eqref{C7} we use the sums 
$$
\frac 13 \Bigl( 1 +j^2\cdot  \chi (f' (3)\cdot f + 
2 f(3) \cdot f')(3) +j\cdot   \chi (2f' (3)\cdot f + 
f(3) \cdot f')(3)\Bigr)
$$
and   
$$
\frac 13 \Bigl( 1 + j\cdot  \chi (f' (3)\cdot f + 
2 f(3) \cdot f')(3) + j^2 \cdot  \chi (2f' (3)\cdot f + 
f(3) \cdot f')(3)\Bigr)
$$
that we insert in the first product on the right--hand side of \eqref{tripleprod} by changing the product $\prod_{r\mid \Delta_0}$ to $\prod_{r \mid 3\Delta_0}$. Finally, we conclude that

\begin{proposition}
\label{PropforC7}
Uniformly for $X\geq 2$, one has the equality
\begin{equation*}
{\rm Heis}^{\eqref{C7}} (X) = 3^{-8} \alpha_3\,  H_2\,  X^{1/4}+O (X^{1/4} \LL^{-1}).
\end{equation*}
\end{proposition}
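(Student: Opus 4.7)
The plan is to follow the template established in Propositions \ref{PropforC4}--\ref{PropforC6}, since case \eqref{C7} differs from \eqref{C6} only in two respects: the value of $\mu(f, f', d)$ is now $3^{16}$ rather than $3^{12}$, and one must detect the event $\chi(f'(3) \cdot f + 2 f(3) \cdot f')(3) \in \{j, j^2\}$ rather than the event that this value equals $1$. As in case \eqref{C6}, the decomposition \eqref{doubledecomposition} applies with $\eta, \eta' \in \{1, 2\}$, producing four pairs over which to sum.

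First I would insert the two orthogonality-type indicators displayed just above the statement (one for the value $j$, one for $j^2$) and absorb each of them into the first product on the right--hand side of \eqref{tripleprod} by replacing $\prod_{r \mid \Delta_0}$ with $\prod_{r \mid 3 \Delta_0}$. These additional factors at $r = 3$ do not interfere with the oscillation argument used to control $\Pi^{\rm et}(f, f')$: the large--prime variable $p_\infty$ is coprime to $3 \Delta_0 \Delta_1$, and Lemma \ref{sumchi(pi)**} still applies to the resulting non--trivial cubic characters on $\Z[j]$, whose conductors remain bounded by a fixed power of $\LL$. The argument of \S \ref{StudyofET} then yields, after summation over $d \in 3\N_3^*$ with $d \leq \LL^{A_4}$, an error of size $O(X^{1/4} \LL^{-1})$, in exact analogy with Proposition \ref{ETissmall*}.

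For the main term coming from $\Pi^{\rm mt}(f, f')$, I would mirror \S \ref{StudyofMT(X)*}: expand the product over $r \mid \Delta'_1$ as a triple sum over $d_0 d_1 d_2 = \Delta'_1$, restrict $d_1, d_2 \leq D_0$ using Siegel--Walfisz applied to $L(s, \chi(f + \eta \mathbbm 1_{\{3\}}))$ and $L(s, \chi(2f + 2\eta \mathbbm 1_{\{3\}}))$, then apply Proposition \ref{complexanalysis*} to evaluate the sum over $d_0$. After interchanging summations and completing the Euler products as in \eqref{1252*}, each of the four $(\eta, \eta')$ choices contributes precisely the factor appearing inside the sum over $\eta \in \{1, 2\}$ in the definition \eqref{defH2} of $H_2$; in particular the dependence on $\eta'$ disappears after the inner summations over $f'_0$ and $f'_1$, since $\Pi^{\rm mt}(f, f')$ depends on $f'$ only through its values away from $3$.

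The only remaining content is the bookkeeping of the leading constant, which I do not expect to be an obstacle: the rescaling $X \mapsto X/3^{16}$ gives an extra factor $3^{-1}$ relative to case \eqref{C6}, the sum over the two admissible values of $\chi(f'(3) \cdot f + 2 f(3) \cdot f')(3)$ contributes a factor $2$, and combining these with the prefactor $2^{-1} \cdot 3^{-7} \alpha_3 H_2$ of Proposition \ref{PropforC6} produces $3^{-8} \alpha_3 H_2$, matching the claimed identity $2^{-2} \cdot 3^{-3} \cdot C^{\eqref{C7}} = 3^{-8}$ of Proposition \ref{allthecases}. The mildly delicate point is checking that the two orthogonality indicators inserted at $r = 3$ really do collapse to the same Euler product at primes $p \neq 3$ as the archetypical case, so that Proposition \ref{complexanalysis*} can be invoked unchanged; this follows from the multiplicativity of the cubic character and the coprimality of $3$ to $\Delta_0 \Delta_1 \Delta'_1 d_0 d_1 d_2$ enforced by the conditions of summation.
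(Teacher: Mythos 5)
Your proposal matches the paper's approach: rescale by $\mu = 3^{16}$, insert the two orthogonality indicators for $j$ and $j^2$ into the first product of \eqref{tripleprod} by replacing $\prod_{r\mid \Delta_0}$ with $\prod_{r\mid 3\Delta_0}$, and then run the $\Pi^{\rm et}$/$\Pi^{\rm mt}$ machinery of the archetype with the $(\eta,\eta')\in\{1,2\}^2$ summation. The arithmetic of the leading constant ($2^{-1}\cdot 3^{-7}\cdot 2\cdot 3^{-1}=3^{-8}$) is correct and agrees with $C^{\eqref{C7}}=4\cdot 3^{-5}\,H_2$ in Proposition \ref{allthecases}. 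One small slip: you write that the error-term summation is over $d\in 3\N_3^*$, but case \eqref{C7} imposes $3\nmid d$, so the sum is over $d\in\N_3^*$ exactly as in the archetypical case (the condition $3\mid d$ belongs to case \eqref{C14}, which is handled via Proposition \ref{1494}). With that corrected, the proposal is consistent with the paper's proof.
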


\end{document}